\documentclass[10pt]{amsart}
\usepackage{texmaX,semmaX,semtkzX}
\usepackage{makecell}
\usepackage{arydshln} 

\def\mod{\quad\textup{mod }}
\def\tmod{\ \textup{mod }}
\advance\oddsidemargin -1cm
\advance\evensidemargin -1cm
\advance\textwidth 2cm
\advance\headsep -0.3cm
\advance\textheight 0.3cm 

\usepackage{tabularx}

\usepackage{blkarray}

\begin{document}

\title{Selmer ranks under quadratic twists satisfying the Heegner hypothesis}

\author{Alexandros Konstantinou}
  \address{Alexandros Konstantinou, Institute of Mathematics and Informatics, Bulgarian Academy of Sciences, Acad. G. Bonchev St., bl.8, 1113 Sofia, Bulgaria}
  \email{a.konstantinou@math.bas.bg}

\subjclass[2010]{11G05 (11G07, 11G40)}

\marginpar{}

\begin{abstract}
We investigate variations of Selmer ranks under quadratic twists satisfying the Heegner hypothesis. In particular, starting with an elliptic curve $E/\mathbb{Q}$ with partial $2$-torsion and a common relaxed Selmer group, we derive explicit formulae describing the effect of twisting on Selmer ranks in terms of matrices over $\mathbb{F}_{2}$. As an application, we show that these formulae are compatible with predictions made by the parity conjecture.
\end{abstract}

\maketitle

\setcounter{tocdepth}{1}

\tableofcontents
\maketitle

\section{Introduction}

\subsection{Elliptic curves over quadratic extensions} \label{subsec_intro1}
A classical problem in the arithmetic of elliptic curves is to understand how the Mordell--Weil rank changes over quadratic extensions. To study this, consider an elliptic curve $E/\mathbb{Q}$ and a quadratic field $K=\mathbb{Q}(\sqrt{d})$. The action of the nontrivial element $\sigma \in \Gal(K/\mathbb{Q})$ naturally splits the group $E(K)$ into $(\pm 1)$-eigenspaces, which correspond to $E(\mathbb{Q})$ and the quadratic twist $E^{d}(\mathbb{Q})$. This decomposition yields the formula:
\[
\rk E(K)=\rk E(\mathbb{Q})+\rk E^{d}(\mathbb{Q}).
\]
Consequently, the problem of studying ranks over quadratic extensions is entirely encoded in the ranks of the quadratic twists $E^{d}/\mathbb{Q}$.

The study of quadratic twists lies at the heart of modern arithmetic statistics. Prominent among the conjectures in this area is that of Goldfeld, which predicts that within the quadratic twist family of an elliptic curve over~$\mathbb{Q}$, half of the twists have analytic rank~$0$ and half have analytic rank~$1$.
In a recent trilogy of papers~\cite{SmithI, SmithII, SmithBSDGoldfeld}, it has been established that the Birch and Swinnerton-Dyer conjecture implies Goldfeld’s conjecture, thereby linking two of the deepest conjectures in the arithmetic of elliptic curves.

All investigations of this kind typically proceed via the more accessible Selmer groups, since the computation of the Mordell--Weil rank is obscured by the mysterious nature of the Tate--Shafarevich group. 
Early work includes~\cite{HB94}, which shows that for the congruent number curve $y^{2}=x^{3}-x$, the $2$-Selmer ranks of its quadratic twists follow a remarkably regular distribution. 
Subsequent results~\cite{Kane10, SD08} extend this to elliptic curves with full rational $2$-torsion and no cyclic $4$-isogeny over~$\mathbb{Q}$, while~\cite{KMR13} establishes an analogous theorem over arbitrary number fields in the generic $2$-torsion case. 
In sharp contrast, when $E(\mathbb{Q})[2]\cong\mathbb{Z}/2\mathbb{Z}$, corresponding to the intermediate case of partial $2$-torsion, no distribution function exists for the $2$-Selmer ranks within quadratic twist families~\cite{KlagsbrunLemkeOliver}. More recently, similar questions have been investigated for higher-dimensional abelian varieties ~\cite{Morgan2019, Yu2016, Yu2019}.

Beyond statistical questions, quadratic extensions satisfying the \emph{Heegner hypothesis}, meaning imaginary quadratic fields $K/\Q$ in which every prime of bad reduction for $E$ splits, offer a particularly rich setting for studying rational points. 
For such $K$, the theory of Heegner points provides a systematic supply of rational points on $E$, arising from the modular parametrisation, and among these one distinguishes a canonical point $P_{K}\in E(K)$. The work of Gross–Zagier and Kolyvagin show that if $P_{K}$ is non-torsion, then $\operatorname{rk}(E/K)=1$ and $\Sha(E/K)$ is finite.
To descend these results to $\mathbb{Q}$, one appeals to analytic work on $L$-functions in quadratic twist families~\cite{BFH90, MM91, Wa85}, yielding the strongest known case of the Birch and Swinnerton–Dyer conjecture: if the analytic rank of $E/\mathbb{Q}$ satisfies $\mathrm{rk}_{\mathrm{an}}(E/\mathbb{Q})\le1$, then  
\[
\mathrm{rk}_{\mathrm{an}}(E/\mathbb{Q})=\operatorname{rk}E(\mathbb{Q})
\quad\text{and}\quad
\Sha(E/\mathbb{Q})\ \text{is finite.}
\]

This leads us to a natural and delicate intersection: considering quadratic extensions that satisfy the Heegner hypothesis for elliptic curves with partial $2$-torsion. In this specific scenario, the behaviour of the full $2$-Selmer group is governed by the interaction between the $\varphi$-- and $\hat{\varphi}$--Selmer groups.

\subsection{Effect of twists satisfying the Heegner hypothesis}In what follows, we write $E/\mathbb{Q}$ for an elliptic curve with$E(\mathbb{Q})[2]\cong \mathbb{Z}/2\mathbb{Z}$. We may and do assume that after a change of variables $E$ is given by an integral Weierstrass model
$y^{2}=x(x^{2}+a x+b)$ with $a,b\in\mathbb{Z}$.
The point $(0,0)$ generates $E(\mathbb{Q})[2]$ and defines a $2$–isogeny $\varphi:E\to E’$, with dual isogeny $\hat{\varphi}:E’\to E$.

Our main result focuses on twists by integers $d$ arising from $\mathcal{D}_{E/\Q}$ (see Definition~\ref{def:primes} below). These all define imaginary quadratic fields satisfying the Heegner hypothesis for $E/\mathbb{Q}$. In particular, we give an explicit formula describing how the $\varphi$– and $\hat{\varphi}$–Selmer ranks change under such twists in terms of matrices over $\mathbb{F}_{2}$ built from the Legendre symbols of the primes dividing $d$.

The variation in Selmer ranks depends only on the sign of the discriminant $\Delta_E$, the coefficients $a,b$ defining $E$, and on whether the ``relaxed'' Selmer group $\Sel_{\mathcal{S}_d}^{\varphi}(E/\Q)$ (see~\S\ref{subsec_truncated}) contains a suitable integer -- a condition that is straightforward to verify in practice (see Remark~\ref{remark:-1_in_truncated_Selmer}).
\begin{theorem}[\textup{=\,Thm. ~\ref{thm:master2}, Cor.~\ref{cor:hatphi-variation-explicit}}]\label{thm:intro-variation}
Let $E/\Q$ be given by the equation $y^{2}=x(x^{2}+a x+b)$, and let $d\in\mathcal{D}_{E/\Q}$.
Define
\[
\delta_{\varphi}(d)=\mathrm{rk}_{\varphi}(E^{-d}/\Q)-\mathrm{rk}_{\varphi}(E/\Q),
\qquad
\delta_{\hat{\varphi}}(d)=\mathrm{rk}_{\hat{\varphi}}(E^{\prime -d}/\Q)-\mathrm{rk}_{\hat{\varphi}}(E’/\Q).
\]
The values of $\delta_{\varphi}(d)$ and $\delta_{\hat{\varphi}}(d)$ are given in the following table.
\medskip
\renewcommand{\arraystretch}{1.2}
\[
\begin{array}{c|c|l}
\hline
\delta_{\varphi}(d) & \delta_{\hat{\varphi}}(d) & \text{Conditions} \\ 
\hline
0 & \omega(d) & b<0 \\[3pt]
\omega(d) & 0 & \Delta_E<0,\ b>0 \\[4pt]
\omega(d)-1-\operatorname{rank}_{\F_2}(\mathbf{A}_{d})
& \omega(d)-\operatorname{rank}_{\F_2}(\mathbf{A}_{d})
& \Delta_E>0,\ b>0,\ a<0,\ \eta_{\varphi}(d)=1 \\[4pt]
\omega(d)-\operatorname{rank}_{\F_2}(\widehat{\mathbf{A}_{d}})
& \omega(d)+1-\operatorname{rank}_{\F_2}(\widehat{\mathbf{A}_{d}})
& \Delta_E>0,\ b>0,\ a<0,\ \eta_{\varphi}(d)=0 \\[4pt]
\omega(d)+1-\operatorname{rank}_{\F_2}(\widetilde{\mathbf{A}_{d}})
& \omega(d)-\operatorname{rank}_{\F_2}(\widetilde{\mathbf{A}_{d}})
& \Delta_E>0,\ b>0,\ a>0,\ \eta_{\varphi}(d)=1 \\[4pt]
\omega(d)-\operatorname{rank}_{\F_2}(\overline{\mathbf{A}_{d}})
& \omega(d)-1-\operatorname{rank}_{\F_2}(\overline{\mathbf{A}_{d}})
& \Delta_E>0,\ b>0,\ a>0,\ \eta_{\varphi}(d)=0 \\[2pt]
\hline
\end{array}
\]

\medskip
Here, $\omega(d)$ denotes the number of prime factors of $d$, and all matrices $\mathbf{A}_{d}$, $\widetilde{\mathbf{A}_{d}}$, $\widehat{\mathbf{A}_{d}}$ and $\overline{\mathbf{A}_{d}}$ are defined over $\F_2$ in Notation~\ref{not:twist-matrices}.
\end{theorem}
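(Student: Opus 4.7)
The plan is to reduce the computation of $\delta_\varphi(d)$ and $\delta_{\hat\varphi}(d)$ to a linear algebra problem over $\mathbb{F}_2$ indexed by the primes dividing $d$, using as a common anchor the relaxed Selmer group $\Sel^{\varphi}_{\mathcal{S}_d}(E/\Q)$ from \S\ref{subsec_truncated}. The first step is to establish the identification
\[
\Sel^{\varphi}_{\mathcal{S}_d}(E/\Q) \;=\; \Sel^{\varphi}_{\mathcal{S}_d}(E^{-d}/\Q),
\]
which I expect to follow from the hypothesis $d \in \mathcal{D}_{E/\Q}$: this forces $d$ to be coprime to the conductor of $E$, so the twist by $-d$ alters local conditions only at places in $\mathcal{S}_d$, and after relaxing there the two groups coincide. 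The remaining work is the combinatorics of recovering the honest Selmer group from this shared object.

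For the twisted curve, the honest $\varphi$-Selmer group is the kernel of the localization map
\[
\Sel^{\varphi}_{\mathcal{S}_d}(E^{-d}/\Q) \;\longrightarrow\; \bigoplus_{v\in\mathcal{S}_d} H^{1}(\Q_v,E[\varphi]) \,/\, L_v(E^{-d}),
\]
so $\delta_\varphi(d)$ can be read off as the difference between the codomain dimension and the $\mathbb{F}_2$-rank of this map. For $p \mid d$ the local condition $L_p(E^{-d})$ is one-dimensional and its position inside $H^1(\Q_p, E[\varphi]) \cong \mathbb{F}_2^2$ is governed by the Legendre symbol of $-b$ (or a variant) modulo $p$; assembling these positions against a basis of $\Sel^{\varphi}_{\mathcal{S}_d}(E/\Q)$ produces exactly the matrix $\mathbf{A}_d$ of Notation~\ref{not:twist-matrices}. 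The analogous calculation for $E'$ and $\hat\varphi$ yields the dual contribution; alternatively, one can invoke a Cassels--Tate-style identity
\[
\dim \Sel^{\varphi}(E/\Q) + \dim \Sel^{\hat\varphi}(E'/\Q) \;=\; \dim E(\Q)[2] + \text{(explicit local terms)}
\]
to pass from the table for $\delta_\varphi$ to the one for $\delta_{\hat\varphi}$, which proves the corollary.

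The six-row case split reflects exactly which ``extra'' row or column the matrix $\mathbf{A}_d$ acquires at $v=\infty$ (via the signs of $\Delta_E, b, a$, which control the connected components of $E(\mathbb{R})$ and $E'(\mathbb{R})$) and at the distinguished class of $-1$ in the relaxed Selmer group (via $\eta_\varphi(d)\in\{0,1\}$, whose meaning is fixed by Remark~\ref{remark:-1_in_truncated_Selmer}). When $b<0$ the image at $\infty$ is trivial on the $\varphi$-side and full on the $\hat\varphi$-side, forcing $\delta_\varphi(d)=0$ and $\delta_{\hat\varphi}(d)=\omega(d)$; when $\Delta_E<0$ and $b>0$ the roles swap. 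The four remaining rows, each with $\Delta_E>0$ and $b>0$, are distinguished by the sign of $a$ and by $\eta_\varphi(d)$, and this is precisely what toggles between $\mathbf{A}_d,\widetilde{\mathbf{A}_d},\widehat{\mathbf{A}_d}$ and $\overline{\mathbf{A}_d}$.

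The main obstacle I anticipate is the careful bookkeeping at $v=2$ and $v=\infty$: these are the only places where the local condition is neither unramified nor trivial in an obvious way, and they determine which of the four matrices appears via the rank-one modification alluded to above. Once one representative sub-case (say $\Delta_E>0,\, b>0,\, a<0,\, \eta_\varphi(d)=1$) is verified in detail, the other three $\Delta_E>0,\,b>0$ rows should follow by analogous but independent checks, and the full table assembles itself.
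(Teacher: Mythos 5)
Your outline follows the paper's own strategy almost exactly: anchor both Selmer groups in the common relaxed group $\Sel^{\varphi}_{\mathcal{S}_d}(E/\Q)$, re-impose the local conditions at the places $v\mid d\infty$ as kernels of $\F_2$-linear maps built from Legendre symbols (yielding $\mathbf{A}_d$ and its variants), and transfer the result to the $\hat\varphi$-side via a Cassels-type identity. Two points in your write-up would not survive as stated, however. First, the equality $\Sel^{\varphi}_{\mathcal{S}_d}(E/\Q)=\Sel^{\varphi}_{\mathcal{S}_d}(E^{-d}/\Q)$ does not follow from $d$ being coprime to the conductor: for $v\mid 2N_E$ the local images $\mathcal{L}_{\varphi,v}(E)$ and $\mathcal{L}_{\varphi,v}(E^{-d})$ can genuinely differ when $v$ is inert or ramified in $\Q(\sqrt{-d})$, and these places are \emph{not} relaxed in $\mathcal{S}_d$. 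What makes the two groups coincide is the splitting condition built into $\mathcal{D}_{E/\Q}$ (every $v\mid 2N_E$ splits in $\Q(\sqrt{-d})$, so $E\cong E^{-d}$ over $\Q_v$); this is Lemma~\ref{eq_local_conditions}, and it also disposes of your anticipated bookkeeping at $v=2$. Second, the identity you invoke to pass to $\delta_{\hat\varphi}$ should be Cassels' \emph{ratio} formula $\#\Sel^{\varphi}(E/\Q)\big/\#\Sel^{\hat\varphi}(E'/\Q)=\prod_v \#\mathcal{L}_{\varphi,v}(E)/2$, i.e.\ a \emph{difference} of the two Selmer ranks equal to a sum of local terms, not a sum of the ranks as you wrote; applied to both $E$ and $E^{-d}$ and subtracted, it gives $\delta_{\hat\varphi}(d)=\delta_{\varphi}(d)+\omega(d)\,\theta_q+\theta_\infty$ exactly as in Corollary~\ref{cor:cassels}, which is how the paper deduces the $\delta_{\hat\varphi}$ column. (A smaller caveat: $\eta_{\varphi}(d)$ is not simply the condition $-1\in\Sel^{\varphi}_{\mathcal{S}_d}(E/\Q)$; that condition is sufficient but not necessary, cf.\ Notation~\ref{not:twist-matrices} and Remark~\ref{remark:-1_in_truncated_Selmer}.) With those corrections your plan is the paper's proof.
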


\subsection{Structure of the argument} \label{subsec_intro_4}
Given a $2$–isogeny $\varphi:E\to E'$, the Selmer group $\Sel^{\varphi}(E/\Q)$
is defined as the subgroup of $H^{1}(\Q,E[\varphi])$ defined by the certain local conditions at each place of~$\Q$.
In particular, for each place $v$, let
\[
\delta_{\varphi,v}\colon E'(\Q_{v})/\varphi(E(\mathbb{Q}_{v})) \to H^{1}(\Q_{v},E[\varphi])
\]
be the local Kummer map, and write
$
\mathcal{L}_{\varphi,v}(E)\ :=\ \operatorname{im}\bigl(\delta_{\varphi,v}\bigr)
\ \subseteq\ H^{1}(\Q_{v},E[\varphi]).
$
Then the $\varphi$–Selmer group is the subgroup of $H^{1}(\mathbb{Q},E[\varphi]$ cut out by these local subgroups:
\[
\Sel^{\varphi}(E/\Q)
=\left\{\, c\in H^{1}(\Q,E[\varphi]) :
\operatorname{res}_{v}(c)\in\mathcal{L}_{\varphi,v}(E)\text{ for all }v\,\right\}.
\]
Thus, the effect of twisting on $\Sel^{\varphi}(E/\Q)$ is determined entirely by how the local subgroups $\mathcal{L}_{\varphi,v}(E)$ change under quadratic twisting.  
To study this, we consider certain \emph{relaxed} versions of Selmer groups, obtained by omitting the conditions at the primes dividing $d$ and at infinity:
\[
\Sel_{\mathcal{S}_d}^{\varphi}(E/\Q)
=\{\, c\in H^{1}(\Q,E[\varphi]) :
\operatorname{res}_{v}(c)\in\mathcal{L}_{\varphi,v}(E)\text{ for all }v\nmid d\infty\,\},
\]
and similarly for $E^{-d}$. Under the natural isomorphism $E[\varphi]\cong E^{-d}[\varphi]$, the groups
$\Sel^{\varphi}(E/\Q)$, $\Sel^{\varphi}(E^{-d}/\Q)$, and their relaxed variants all embed in the same cohomology space $H^{1}(\Q,E[\varphi])$. As subgroups of this cohomology group, one has
\[
\Sel_{\mathcal{S}_d}^{\varphi}(E/\Q)
=\Sel_{\mathcal{S}_d}^{\varphi}(E^{-d}/\Q),
\]
which is entirely natural: since $E/\Q_{p}$ and $E^{-d}/\Q_{p}$ are isomorphic for all $p\mid 2N_{E}$, and have good reduction at every prime not dividing $d$ and the real place, their local conditions coincide at all places defining the relaxed Selmer groups. Consequently, any discrepancy between $\Sel^{\varphi}(E/\Q)$ and $\Sel^{\varphi}(E^{-d}/\Q)$ can only arise from the primes dividing $d$ and infinity.

To this end, we take the common relaxed Selmer group
$\Sel_{\mathcal{S}_{d}}^{\varphi}(E/\Q)=\Sel_{\mathcal{S}_{d}}^{\varphi}(E^{-d}/\Q)$
as a common starting point, and successively impose the remaining local conditions at the places \hbox{$v\mid d\infty$}.
By tracking how these local conditions change under twisting (see Lemmas~\ref{loc_conditions_at_q} and~\ref{lem:infinity}),
we deduce the effect that the quadratic twist has on the Selmer group.
To achieve this, we introduce the homomorphisms
\[
\varepsilon_{\infty},\ \lambda_{p},\ \gamma_{p}:\Q^{\times}/\Q^{\times2}\to\F_{2},
\]
which capture the effect of imposing the local conditions at the remaining primes. 

To make this concrete, consider the case $b<0$.  
Per Lemmas~\ref{loc_conditions_at_q} and~\ref{lem:infinity}, the effect of twisting on the local conditions is as follows: for each prime $q\mid d$, the local subgroup at $q$ for $E^{-d}$ is trivial, whereas for $E$ it is the unramified subgroup $H^{1}_{\textup{ur}}(\mathbb{Q}_{q},E[\varphi])$; at the real place, both local subgroups are trivial. 
Hence, starting from the common relaxed Selmer group $\Sel^{\varphi}_{\mathcal{S}_{d}}(E/\Q)$, 
the usual Selmer groups $\Sel^{\varphi}(E/\Q)$ and $\Sel^{\varphi}(E^{-d}/\Q)$ 
are recovered by performing the following steps at the places $v\mid d\infty$:
\[
\renewcommand{\arraystretch}{1.2}
\begin{array}{|c|c|l|}
\hline
\text{Curve} & \text{Place} & \text{Operation} \\ \hline
E & q\mid d & \text{intersect with } \ker(\lambda_{q}) \\ 
  & \infty  & \text{intersect with } \ker(\varepsilon_{\infty}) \\ \hline
E^{-d} & q\mid d & \text{intersect with } \ker(\lambda_{q})\cap\ker(\gamma_{q}) \\ 
       & \infty  & \text{intersect with } \ker(\varepsilon_{\infty}) \\ \hline
\end{array}
\]
This procedure yields an explicit description of $\Sel^{\varphi}(E/\Q)$ and $\Sel^{\varphi}(E^{-d}/\Q)$ 
in terms of the common relaxed group $\Sel^{\varphi}_{\mathcal{S}_{d}}(E/\Q)$ 
(see Theorem~\ref{thm:master}), 
thereby reducing the study of Selmer–rank variation to a problem in linear algebra over~$\F_{2}$. We note that this method of expressing Selmer conditions in terms of matrices over $\mathbb{F}_{2}$ and analysing their variation under twisting fits into a broader framework developed in earlier work. We note that the use of matrices over $\mathbb{F}_{2}$ to describe Selmer conditions and their variation under twisting has appeared in earlier works, including~\cite{HB94} and~\cite{KK17}, which treat, respectively, the congruent number curve and elliptic curves over~$\mathbb{Q}$ with rational partial $2$–torsion.

\subsection{Parity conjecture and twisting}
\label{subsec_parity_conjecture_intro}

The Birch and Swinnerton--Dyer conjecture predicts that the algebraic and analytic ranks of an elliptic curve $E$ over a number field $F$ coincide.  
Writing $\Lambda(E,s)$ for the completed $L$--function of $E/F$, it is expected that $\Lambda(E,s)$ extends analytically to $\mathbb{C}$ and satisfies the functional equation  
$
\Lambda(E,2-s)=\omega(E/F)\Lambda(E,s),
$
where $\omega(E/F)=\prod_v \omega_v(E/F)$ is the product of local root numbers.  
Assuming analytic continuation, the Birch and Swinnerton--Dyer conjecture then implies the \emph{parity conjecture}.

\begin{conjecture}[Parity conjecture]\label{conj:parity}
For an elliptic curve $E$ over a number field $F$,
\[
(-1)^{\mathrm{rk}(E/F)} = w(E/F).
\]
\end{conjecture}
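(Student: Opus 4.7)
The plan must acknowledge upfront that Conjecture~\ref{conj:parity} is not a theorem one can hope to settle in this paper: it is among the deepest open problems in the arithmetic of elliptic curves, with no unconditional proof over a general number field currently available. I will therefore outline the two strategies that constitute the state of the art and indicate where each meets its principal obstruction.

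The first is the Iwasawa-theoretic route, initiated by Nekov\'a\v{r} and substantially extended by the Dokchitser brothers. One fixes an odd prime $p$, writes $\mathrm{rk}_p(E/F)$ for the $\mathbb{Z}_p$-corank of $\mathrm{Sel}_{p^\infty}(E/F)$, and first establishes the $p$-parity statement
\[
(-1)^{\mathrm{rk}_p(E/F)} \;=\; w(E/F).
\]
The ingredients are the self-duality of the $p$-adic representation $V_pE$, Nekov\'a\v{r}'s Selmer complexes together with their generalised Cassels--Tate pairing, the Dokchitsers' regulator constants for twists by Artin representations, and congruences between automorphic forms; together these reduce the parity of the Selmer corank to a sum of local invariants that must be matched against the $w_v(E/F)$. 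To pass from $p$-parity to parity one then invokes the (conjectural) finiteness of $\Sha(E/F)[p^\infty]$, under which its $p$-primary part has square order and $\mathrm{rk}_p\equiv\mathrm{rk}\pmod 2$.

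The alternative, analytic, route assumes modularity of $E/F$ and hence the functional equation $\Lambda(E,2-s)=w(E/F)\Lambda(E,s)$; the parity of the order of vanishing at $s=1$ can then be read off the sign of the functional equation, and the rank half of the Birch--Swinnerton-Dyer conjecture delivers the conclusion. This route is unconditional only when both modularity and rank-BSD are known, which at present restricts it essentially to analytic ranks $\le 1$ over $\mathbb{Q}$ via the work cited in the excerpt above, together with selected CM and totally real cases.

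The hard part, shared by both routes, is that the parity conjecture is a genuine reciprocity law: it equates a \emph{global} parity (of a Selmer corank, or of an order of vanishing) with a \emph{product of local} root numbers, and no presently available technique produces such a matching unconditionally over an arbitrary number field $F$. All known families of cases either reduce to a base instance whose parity is externally certified --- for example by a Heegner-point construction combined with the non-vanishing theorems of~\cite{BFH90, MM91, Wa85} --- or exploit $p$-adic $L$-functions whose construction is available only in restricted settings. A proof of Conjecture~\ref{conj:parity} in the generality stated therefore lies beyond current methods, and any contribution one can make in a paper of the present scope is necessarily partial: either a new case of $p$-parity for a specific family, or a compatibility statement showing that independently computed Selmer-rank variations agree mod~$2$ with the predicted variation of $w(E/\mathbb{Q})$.
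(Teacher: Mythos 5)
Your assessment is correct: the statement is labelled a \emph{conjecture} in the paper and no proof is given or attempted there; the paper only records it as motivation and then establishes the unconditional partial results that are actually within reach, namely the $2$-parity statement over the quadratic extension $\mathbb{Q}(\sqrt{-d})$ via the Dokchitser--Dokchitser theorem (Theorem~\ref{thm:parity}) and the compatibility of the explicit Selmer-rank formulae with the predicted parity shift (Proposition~\ref{prop:delta_phi}). Your closing remark anticipates exactly this: the paper's contribution is precisely the ``compatibility statement showing that independently computed Selmer-rank variations agree mod~$2$ with the predicted variation of $w(E/\mathbb{Q})$,'' so your survey of the Iwasawa-theoretic and analytic routes, and of where each breaks down, is an accurate account of why nothing stronger is claimed.
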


Progress towards this conjecture has come through its $p$-primary refinement, the \emph{$p$-parity conjecture}, obtained by replacing the algebraic rank with the $\mathbb{Z}_{p}$-corank of the $p^{\infty}$-Selmer group.
This version is known to hold for all primes $p$ and all elliptic curves over~$\mathbb{Q}$ \cite[Thm.~1.4]{DD10}.

Quadratic twisting provides a natural setting in which to study parity phenomena.  
A central question in this direction was formulated by Kramer and Tunnell~\cite[Conj.~3.1]{KT82}, 
who conjectured a relation between local root numbers and local arithmetic data, 
specifically Tamagawa numbers.  
This conjecture was later resolved for elliptic curves in~\cite[Cor.~4.8]{DD09}, 
thereby establishing the $2$-parity conjecture for elliptic curves over quadratic extensions: 
for any quadratic extension $L/F$ and any elliptic curve $E/F$,  
\begin{equation} \label{2-parity_over_quadratic}
(-1)^{\mathrm{rk}_{2^{\infty}}(E/L)} = w(E/L).
\end{equation}
An analogue of this result for Jacobians of hyperelliptic curves, under mild assumptions, 
was subsequently obtained in~\cite{Morgan2023}.
In particular, if $L=\mathbb{Q}(\sqrt{-d})$ satisfies the Heegner hypothesis for $E/\mathbb{Q}$, 
then a root number computation gives $w(E^{-d}/\mathbb{Q})=-w(E/\mathbb{Q})$.  
In light of~\eqref{2-parity_over_quadratic}, this implies that
\[
\mathrm{rk}_{2^{\infty}}(E^{-d}/\mathbb{Q}) - \mathrm{rk}_{2^{\infty}}(E/\mathbb{Q}) \equiv 1 \tmod{2},
\]
and consequently
\begin{equation}\label{eq:parity-expected}
\mathrm{rk}_{2}(E^{-d}/\mathbb{Q}) - \mathrm{rk}_{2}(E/\mathbb{Q}) \equiv 1 \tmod{2},
\end{equation}
(see Theorem~\ref{thm:parity}). When $E(\mathbb{Q})=\Z/2\Z$, variations in $2$-Selmer ranks under twisting are controlled by the $\varphi$- and $\hat{\varphi}$-Selmer ranks.
For each square-free integer $d$, set
\[
\delta_{\varphi}(d)
 = \mathrm{rk}_{\varphi}(E^{-d}/\mathbb{Q})
   - \mathrm{rk}_{\varphi}(E/\mathbb{Q}), \qquad
\delta_{\hat{\varphi}}(d)
 = \mathrm{rk}_{\hat{\varphi}}(E^{\prime -d}/\mathbb{Q})
   - \mathrm{rk}_{\hat{\varphi}}(E'/\mathbb{Q}).
\]
It is then natural to ask whether the explicit formulae for $\delta_{\varphi}(d), \delta_{\hat{\varphi}}(d)$ detailed in Theorem~\ref{thm:intro-variation} are consistent with the parity relation in~\eqref{eq:parity-expected}.  
The answer is affirmative: we show that for any $d\in\mathbb{Q}^{\times}/\mathbb{Q}^{\times2}$ (not necessarily $d\in\mathcal{D}_{E/\mathbb{Q}}$), the combined variation $\delta_{\varphi}(d)+\delta_{\hat{\varphi}}(d)$ captures precisely the parity shift in $2$--Selmer ranks (see Proposition~\ref{prop:delta_phi}):
\begin{equation} \label{eq:selmer-parity}
\mathrm{rk}_{2}(E^{-d}/\mathbb{Q})
 - \mathrm{rk}_{2}(E/\mathbb{Q})
 \equiv
 \delta_{\varphi}(d) + \delta_{\hat{\varphi}}(d)
 \mod{2}.
\end{equation}
Moreover, when $d\in \mathcal{D}_{E/\mathbb{Q}}$ the explicit expressions in Theorem~\ref{thm:intro-variation} satisfy
\[
\delta_{\varphi}(d) + \delta_{\hat{\varphi}}(d) \equiv 1 \tmod{2},
\]
independently of the matrix ranks. As a result, we recover exactly the parity shift predicted by the parity conjecture for quadratic twists satisfying the Heegner hypothesis. At the level of Tate--Shafarevich groups, the compatibility detailed in \eqref{eq:selmer-parity} gives rise to a symmetric relation between the $\varphi$- and $\hat{\varphi}$-kernels.

\begin{corollary}[{=~Corollary~\ref{cor:symmetric-sha-ratio}}]
Let $d$ be a square-free integer, and assume that $\Sha(E/\mathbb{Q}(\sqrt{-d}))[2^{\infty}]$ is finite. Then
\[
\frac{\#\Sha(E/\mathbb{Q})[\varphi]}{\#\Sha(E'/\mathbb{Q})[\hat{\varphi}]}
\equiv
\frac{\#\Sha(E^{-d}/\mathbb{Q})[\varphi]}{\#\Sha(E^{\prime -d}/\mathbb{Q})[\hat{\varphi}]}
\mod{\mathbb{Q}^{\times 2}}.
\]
If this congruence fails for some~$d$, then at least one of $\Sha(E/\mathbb{Q})[2^{\infty}]$ or $\Sha(E^{-d}/\mathbb{Q})[2^{\infty}]$ must be infinite.
\end{corollary}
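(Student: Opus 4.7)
The plan is to translate the desired congruence modulo $\mathbb{Q}^{\times 2}$ into a single $\mathbb{F}_{2}$-parity identity among dimensions of $\Sha[\varphi]$ and $\Sha[\hat{\varphi}]$, and then reduce it to the parity shift already recorded in~\eqref{eq:selmer-parity}. The hypothesis that $\Sha(E/K)[2^{\infty}]$ is finite implies, by restriction--corestriction together with Cassels' isogeny invariance, that each of $\Sha(E/\mathbb{Q})$, $\Sha(E^{-d}/\mathbb{Q})$, $\Sha(E'/\mathbb{Q})$, $\Sha(E^{\prime -d}/\mathbb{Q})$ has finite $2^{\infty}$-part. Consequently the four groups appearing in the statement are all finite $\mathbb{F}_{2}$-vector spaces (as $\Sha[\varphi]\subseteq\Sha[2]$), so the target congruence reduces to proving
\[
\Xi:=\bigl(\dim\Sha(E^{-d})[\varphi]-\dim\Sha(E)[\varphi]\bigr)+\bigl(\dim\Sha(E^{\prime -d})[\hat{\varphi}]-\dim\Sha(E')[\hat{\varphi}]\bigr)\equiv 0 \pmod 2.
\]

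For the central computation, I would unpack each $\dim\Sha[\varphi]$ using the $2$-isogeny descent exact sequence $0\to E'(\mathbb{Q})/\varphi E(\mathbb{Q})\to\Sel^{\varphi}(E/\mathbb{Q})\to\Sha(E/\mathbb{Q})[\varphi]\to 0$ and its $\hat{\varphi}$-counterpart. Combining them via the snake-lemma sequence attached to $\hat{\varphi}\circ\varphi=[2]$ yields the identity
\[
\dim\bigl(E'(\mathbb{Q})/\varphi E(\mathbb{Q})\bigr)+\dim\bigl(E(\mathbb{Q})/\hat{\varphi}E'(\mathbb{Q})\bigr)=\mathrm{rk}(E/\mathbb{Q})+\dim E(\mathbb{Q})[2]+\dim\bigl(E'[\hat{\varphi}](\mathbb{Q})/\varphi E[2](\mathbb{Q})\bigr),
\]
and analogously for $E^{-d}$. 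Since the Galois action on $E[2]$ is unaffected by quadratic twisting, the two torsion summands on the right are twist-invariant; subtracting the identities for $E$ and $E^{-d}$ and substituting into $\Xi$ collapses the torsion data, leaving
\[
\Xi=\delta_{\varphi}(d)+\delta_{\hat{\varphi}}(d)-\bigl(\mathrm{rk}(E^{-d}/\mathbb{Q})-\mathrm{rk}(E/\mathbb{Q})\bigr).
\]

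To finish, I would invoke~\eqref{eq:selmer-parity}, which gives $\delta_{\varphi}(d)+\delta_{\hat{\varphi}}(d)\equiv \mathrm{rk}_{2}(E^{-d}/\mathbb{Q})-\mathrm{rk}_{2}(E/\mathbb{Q})\pmod 2$. Together with the descent identity $\mathrm{rk}_{2}(F/\mathbb{Q})=\mathrm{rk}(F/\mathbb{Q})+\dim F(\mathbb{Q})[2]+\dim\Sha(F/\mathbb{Q})[2]$, the equality $\dim E(\mathbb{Q})[2]=\dim E^{-d}(\mathbb{Q})[2]=1$, and the evenness of $\dim\Sha(F/\mathbb{Q})[2]$ (a standard consequence of the Cassels--Tate alternating pairing under finiteness of $\Sha(F/\mathbb{Q})[2^{\infty}]$), this becomes $\delta_{\varphi}(d)+\delta_{\hat{\varphi}}(d)\equiv \mathrm{rk}(E^{-d}/\mathbb{Q})-\mathrm{rk}(E/\mathbb{Q})\pmod 2$, so $\Xi\equiv 0\pmod 2$. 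The final assertion is the contrapositive: finiteness of both $\Sha(E/\mathbb{Q})[2^{\infty}]$ and $\Sha(E^{-d}/\mathbb{Q})[2^{\infty}]$ forces $\Sha(E/K)[2^{\infty}]$ to be finite as well (via standard restriction arguments whose kernels are killed by~$2$), after which the congruence is forced by the argument above.

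The main obstacle is the bookkeeping in the middle step: one must check that the snake-lemma sequence isolates precisely $\mathrm{rk}(E/\mathbb{Q})$ as the sole non-torsion contribution to $\dim(E'(\mathbb{Q})/\varphi E(\mathbb{Q}))+\dim(E(\mathbb{Q})/\hat{\varphi}E'(\mathbb{Q}))$, and that every remaining summand is intrinsic to $E[2]$ as a Galois module, hence unchanged under $E\mapsto E^{-d}$. Once this is secured, the rest is a direct application of the already-established parity shift combined with the evenness of $\dim\Sha[2]$.
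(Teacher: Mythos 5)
Your proof is correct, and it reaches the same pivot as the paper --- namely Proposition~\ref{prop:delta_phi} (i.e.~\eqref{eq:selmer-parity}) combined with the evenness of $\Sha[2]$-dimensions coming from the alternating Cassels pairing --- but it gets there through different exact sequences. The paper works with the induced map $\Phi:\Sha(E/\mathbb{Q})[2]\to\Sha(E'/\mathbb{Q})[\hat{\varphi}]$: the four-term sequence $0\to\Sha(E/\mathbb{Q})[\varphi]\to\Sha(E/\mathbb{Q})[2]\to\Sha(E'/\mathbb{Q})[\hat{\varphi}]\to\operatorname{coker}(\Phi)\to0$ converts each ratio into $2^{\delta_{2}}\cdot\#\operatorname{coker}(\Phi)$ modulo squares (with $\delta_{2}$ the $\mathbb{Z}_{2}$-corank of $\Sha[2^{\infty}]$), and the Schaefer--Stoll five-term sequence together with Proposition~\ref{prop:delta_phi} gives $\#\operatorname{coker}(\Phi_{d})\equiv\#\operatorname{coker}(\Phi)\bmod\mathbb{Q}^{\times2}$; the upshot is the sharper identity that the two ratios differ by exactly $2^{\delta_{2}(E^{-d}/\mathbb{Q})-\delta_{2}(E/\mathbb{Q})}$ modulo squares, from which both assertions of the corollary drop out at once. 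You instead use the Kummer sequences for $\varphi$ and $\hat{\varphi}$ together with the snake sequence for $[2]=\hat{\varphi}\circ\varphi$ to write $\dim\Sha(E/\mathbb{Q})[\varphi]+\dim\Sha(E'/\mathbb{Q})[\hat{\varphi}]$ as $\mathrm{rk}_{\varphi}(E/\mathbb{Q})+\mathrm{rk}_{\hat{\varphi}}(E'/\mathbb{Q})-\mathrm{rk}(E/\mathbb{Q})$ minus twist-invariant torsion, and then trade $\mathrm{rk}_{2}$ for the Mordell--Weil rank using $\dim\Sha[2]\equiv0\tmod 2$. Both computations are sound: your snake-lemma identity checks out, and the torsion terms are indeed twist-invariant since $E[2]\cong E^{-d}[2]$ as Galois modules and $E(\mathbb{Q})[2]=E[\varphi]$ throughout. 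Two minor remarks: the four groups in the statement are finite unconditionally, being quotients of $\varphi$- and $\hat{\varphi}$-Selmer groups, so no hypothesis is needed for the ratios to make sense; and for the second assertion your detour back to $\Sha(E/K)[2^{\infty}]$ is unnecessary, since your main argument only ever invokes finiteness of $\Sha(E/\mathbb{Q})[2^{\infty}]$ and $\Sha(E^{-d}/\mathbb{Q})[2^{\infty}]$, so the contrapositive is immediate.
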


\subsection*{Acknowledgements}
The author would like to thank Evis Ieronymou for valuable discussions which led to the work presented here, and Adam Morgan for helpful comments on an earlier draft and for drawing attention to related works in the literature.
This work was supported by the scientific programme 
``Enhancing the Research Capacity in Mathematical Sciences (PIKOM)'',
No.~DO1-67/05.05.2022, 
of the Ministry of Education and Science of Bulgaria.

\subsection{General Notation } \label{General_notation}
Throughout, we fix an elliptic curve $E/\mathbb{Q}$ with partial $2$–torsion, that is, $E(\mathbb{Q})[2]\cong\mathbb{Z}/2\mathbb{Z}$.  
We shall always regard $E/\mathbb{Q}$ as given by an integral Weierstrass equation
\[
E:\ y^{2}=x(x^{2}+a x+b), \qquad A,B\in\mathbb{Z},
\]
so that $(0,0)$ is the nontrivial rational $2$–torsion point.  
This determines a $2$–isogeny $\varphi:E\to E'$, where $E'$ is the $2$–isogenous curve.  We write $\hat{\varphi}: E' \to E$ to denote the dual isogeny.

\medskip
\noindent
For ease of reference, we collect the main notation used throughout the paper.
\medskip

\medskip

\renewcommand{\arraystretch}{1.35}
\begin{tabular}{|p{0.23\textwidth}|p{0.65\textwidth}|} 
\hline
$E$ & elliptic curve with $E(\mathbb{Q})[2]=\mathbb{Z}/2\mathbb{Z}$\\
$E',\,E^{d},\,E'^{d}$ & respectively the $2$–isogenous curve, the quadratic twist by $d$, and the quadratic twist of $E'$ by $d$\\
$N_E,\,\Delta_E,\,(\Delta_E)_{\textup{min}}$ & conductor of $E$, discriminant of the chosen model of $E/\mathbb{Q}$, and minimal discriminant of $E/\mathbb{Q}$\\
$\varphi,\,\hat{\varphi}$ & $2$–isogeny with kernel $\langle(0,0)\rangle$ and its dual $E'\to E$\\[2pt]
$\Sel^{\varphi}(E/\Q)$ & $\varphi$–Selmer group\\
$\Sel_{S}^{\varphi}(E/\Q)$ & truncated $\varphi$–Selmer group (Def.~\ref{def:truncated_selmer})\\
$\mathrm{rk}_{\varphi}(E/\Q),\,\mathrm{rk}_{2}(E/\Q)$ & $\F_{2}$–dimensions of $\Sel^{\varphi}(E/\Q)$ and $\Sel^{2}(E/\Q)$\\
$\mathcal{L}_{\varphi,v}(E),\,\mathcal{L}_{\hat{\varphi},v}(E)$ & local subgroups at $v$ (see~\eqref{eq:local_conditioss})\\[2pt]
$\mathcal{Q}_{E/\Q},\,\mathcal{D}_{E/\Q}$ & twisting sets (Def.~\ref{def:primes})\\
$\mathcal{S}_{d},\,\mathcal{S}_{d}^{0}$ & sets of primes dividing $d$ (Not.~\ref{not:primesofd})\\
$\mathcal{T}_{d}^{\pm}$ & distinguished subsets of $\mathcal{S}_{d}$ (Not.~\ref{not:T_d})\\
$(\cdot,\cdot)_{v}$ & quadratic Hilbert symbol over $\Q_{v}$\\[2pt]
$\epsilon_{\infty},\,\gamma_{q},\,\lambda_{q}$ & maps $\Q^{\times}/\Q^{\times2}\to\F_{2}$ encoding signs, valuations, and Legendre symbols (Def.~\ref{def:homomorphisms})\\
$\Gamma_{\mathcal{T}},\,\Lambda_{\mathcal{T}},\,S_{\infty}$ & matrices representing $\gamma_{q},\,\lambda_{q},\,\epsilon_{\infty}$ (Def.~\ref{def:matrices})\\
$\mathbf{A}_{d},\,\widetilde{\mathbf{A}}_{d},\,\overline{\mathbf{A}}_{d},\,\widehat{\mathbf{A}}_{d}$ & block matrices defined via Legendre symbols (Not.~\ref{not:twist-matrices})\\[2pt]
$\Sigma$ & supporting set for $V\subseteq\Q^{\times}/\Q^{\times2}$ (Not.~\ref{not:GammaLambda})\\
$\omega(d)$ & number of distinct prime factors of $d$\\
\hline
\end{tabular}
\medskip

\section{Selmer groups and local conditions}
We briefly recall the construction of the $\varphi$-Selmer group.

\subsection{Selmer groups and local subgroups} 
 There is a $2$–isogeny $\varphi: E \to E'$, given by $(x,y)\mapsto (y^{2}/x^{2},\, y(b-x^{2})/x^{2})$ with kernel $E[\varphi]= \langle (0,0) \rangle$, where $E': y^{2} = x^{3} - 2a x^{2} + (a^{2}-4b)x$. Then, $\varphi(E[2])= \langle (0,0) \rangle$, where $(0,0)$ is now a point in $E'(\mathbb{Q})$.  

This yields the short exact sequence $0 \to E[\varphi] \to E[2] \xrightarrow{\varphi} \varphi(E[2]) \to 0$, and hence a connecting map $\delta_{\varphi}: E'(\mathbb{Q})/\varphi(E(\mathbb{Q})) \to H^{1}(\mathbb{Q},C)$.  
Identifying $E[\varphi] \cong \mu_{2}$ via $(0,0) \mapsto -1$ gives an isomorphism $H^{1}(\mathbb{Q},E[\varphi])\cong \mathbb{Q}^{\times}/\mathbb{Q}^{\times 2}$ and a homomorphism $\mu_{\varphi}: E'(\mathbb{Q})/\varphi(E(\mathbb{Q})) \to \mathbb{Q}^{\times}/\mathbb{Q}^{\times 2}$ determined by
\begin{equation} \label{eq:mu_phi}
\mu_{\varphi}(P)=
\begin{cases}
x, & \text{if } P=(x,y) \text{ with } x\neq 0, \\[6pt]
\Delta_{E}, & \text{if } P=(0,0),
\end{cases}
\end{equation}
where $\Delta_{E}$ is the discriminant of $E/\mathbb{Q}$ (see \cite[Prop.~X.4.9]{SilvermanAEC}). As discussed in \S \ref{subsec_intro_4}, this construction carries over locally. For each place $v$ of $\mathbb{Q}$ there is a connecting homomorphism
\begin{equation} \label{eq:local_conditioss}
\delta_{\varphi,v}:
E'(\mathbb{Q}_{v})/\varphi(E(\mathbb{Q}_{v}))
\to
H^{1}(\mathbb{Q}_{v},E[\varphi]),
\qquad
\mathcal{L}_{\varphi,v}(E/\mathbb{Q})
:=\operatorname{im}(\delta_{\varphi,v}),
\end{equation}
whose image defines the distinguished local condition
$\mathcal{L}_{\varphi,v}(E/\mathbb{Q})\subseteq H^{1}(\mathbb{Q}_{v},E[\varphi])$. Via the canonical identification 
$H^{1}(\mathbb{Q}_{v},E[\varphi])\cong\mathbb{Q}_{v}^{\times}/\mathbb{Q}_{v}^{\times2}$,
these can be viewed as subgroups of $\mathbb{Q}_{v}^{\times}/\mathbb{Q}_{v}^{\times2}$.  
The $\varphi$–Selmer group is then given by
\[
\Sel^{\varphi}(E/\mathbb{Q})
=\bigcap_{v}\ker\!\left(
H^{1}(\mathbb{Q},E[\varphi])
\to
H^{1}(\mathbb{Q}_{v},E[\varphi])/\mathcal{L}_{\varphi,v}(E/\mathbb{Q})
\right).
\]
As a result, determining the $\varphi$–Selmer group amounts to understanding the local subgroups $\mathcal{L}_{\varphi,v}(E)$ for all places $v$ simultaneously.  
For odd places $v$ of good reduction, these are described by the following classical result.
\begin{lemma}[{\cite[Lemma~4.1]{Cassels1965}}] \label{lem:unramified_condition}
Suppose that $E/\mathbb{Q}$ has good reduction at an odd prime $p$.  
Under the identifications 
$H^{1}(\mathbb{Q},E[\varphi]) \cong \mathbb{Q}^{\times}/\mathbb{Q}^{\times 2}$ 
and 
$H^{1}(\mathbb{Q}_{p},E[\varphi]) \cong \mathbb{Q}_{p}^{\times}/\mathbb{Q}_{p}^{\times 2}$,  
the restriction map
\[
H^{1}(\mathbb{Q},E[\varphi])
\to 
H^{1}(\mathbb{Q}_{p},E[\varphi])\,/\,\mathcal{L}_{\varphi,p}(E)
\]
is identified with the homomorphism 
\[
\alpha \mapsto p^{\,v_{p}(\alpha)} \bmod \mathbb{Q}_{p}^{\times 2}.
\]
\end{lemma}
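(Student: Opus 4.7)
The plan is to identify the quotient $H^{1}(\mathbb{Q}_{p},E[\varphi])/\mathcal{L}_{\varphi,p}(E)$ with $\mathbb{F}_{2}$ detecting parity of the $p$-adic valuation, and then read the restriction map off directly. This reduces the claim to two assertions: (i) under the isomorphism $H^{1}(\mathbb{Q}_{p},E[\varphi])\cong\mathbb{Q}_{p}^{\times}/\mathbb{Q}_{p}^{\times 2}$, the subgroup $\mathcal{L}_{\varphi,p}(E)$ coincides with the unit image $\mathbb{Z}_{p}^{\times}\bmod\mathbb{Q}_{p}^{\times 2}$; and (ii) the restriction map $H^{1}(\mathbb{Q},E[\varphi])\to H^{1}(\mathbb{Q}_{p},E[\varphi])$ is the natural inclusion $\mathbb{Q}^{\times}/\mathbb{Q}^{\times 2}\hookrightarrow\mathbb{Q}_{p}^{\times}/\mathbb{Q}_{p}^{\times 2}$. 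Once both are established, any $\alpha\in\mathbb{Q}^{\times}$ factors as $\alpha=p^{v_{p}(\alpha)}u$ with $u\in\mathbb{Z}_{p}^{\times}$, so its image in $\mathbb{Q}_{p}^{\times}/(\mathbb{Q}_{p}^{\times 2}\cdot\mathbb{Z}_{p}^{\times})$ is precisely $p^{v_{p}(\alpha)}\bmod\mathbb{Q}_{p}^{\times 2}$.

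Claim (ii) is automatic from the global choice of $E[\varphi]\cong\mu_{2}$ fixed in~\eqref{eq:mu_phi} together with functoriality of Kummer theory. For claim (i), the identification $H^{1}_{\mathrm{ur}}(\mathbb{Q}_{p},\mu_{2})\cong\mathbb{Z}_{p}^{\times}/\mathbb{Z}_{p}^{\times 2}$ is standard: for odd $p$ the extension $\mathbb{Q}_{p}(\sqrt{\alpha})/\mathbb{Q}_{p}$ is unramified iff $v_{p}(\alpha)$ is even. It therefore suffices to prove $\mathcal{L}_{\varphi,p}(E)=H^{1}_{\mathrm{ur}}(\mathbb{Q}_{p},E[\varphi])$ by double inclusion. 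The inclusion $\subseteq$ follows because good reduction at odd $p$ makes $E[\varphi]$ unramified and multiplication by $2$, hence $\varphi$, an automorphism of the formal group $\widehat{E}(p\mathbb{Z}_{p})$, so every Kummer cocycle coming from a point of $E'(\mathbb{Q}_{p})$ can be represented by a cocycle trivial on inertia. For the reverse inclusion, I would argue by a cardinality count: Tate's local Euler characteristic formula, applied to the finite unramified module $E[\varphi]$ of order prime to $p$, gives $|H^{1}_{\mathrm{ur}}(\mathbb{Q}_{p},E[\varphi])|=|E[\varphi](\mathbb{Q}_{p})|$, while the Kummer sequence combined with the $\varphi$-divisibility of $\widehat{E}(p\mathbb{Z}_{p})$ yields $|E'(\mathbb{Q}_{p})/\varphi E(\mathbb{Q}_{p})|=|E[\varphi](\mathbb{Q}_{p})|$, forcing equality.

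The principal obstacle is the cardinality comparison: one must chase the reduction sequence $0\to\widehat{E}(p\mathbb{Z}_{p})\to E(\mathbb{Q}_{p})\to E(\mathbb{F}_{p})\to 0$ through the snake lemma applied to $\varphi$, isolating the formal-group contribution (on which $\varphi$ is an isomorphism) from the residue-field contribution (which supplies the matching factor $|E[\varphi](\mathbb{Q}_{p})|$). Once this count is in hand, the remainder of the argument is a direct translation between Kummer theory and $p$-adic valuations.
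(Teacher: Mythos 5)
Your proposal is correct, and its final ``translation'' step (restriction is the natural map $\mathbb{Q}^{\times}/\mathbb{Q}^{\times 2}\to\mathbb{Q}_{p}^{\times}/\mathbb{Q}_{p}^{\times 2}$, the unramified subgroup is the image of $\mathbb{Z}_{p}^{\times}$, hence the quotient records $p^{v_{p}(\alpha)}$) coincides with the paper's. The genuine difference is in how the key input $\mathcal{L}_{\varphi,p}(E)=H^{1}_{\mathrm{ur}}(\mathbb{Q}_{p},E[\varphi])$ is obtained: the paper simply cites \cite[Lemma~4.1]{Cassels1965} for this, whereas you re-prove it by double inclusion --- the containment $\mathcal{L}_{\varphi,p}(E)\subseteq H^{1}_{\mathrm{ur}}$ from good reduction and the fact that $\varphi$ is an isomorphism on the formal group (so $\varphi:E(\mathbb{Q}_{p}^{\mathrm{ur}})\to E'(\mathbb{Q}_{p}^{\mathrm{ur}})$ is surjective), and equality by comparing cardinalities, both sides having order $\#E[\varphi](\mathbb{Q}_{p})=2$. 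This is a standard and valid argument (it is essentially how Cassels' lemma is proved), and it makes the lemma self-contained at the cost of the snake-lemma bookkeeping you describe. One small correction: the identity $\#H^{1}_{\mathrm{ur}}(\mathbb{Q}_{p},M)=\#H^{0}(\mathbb{Q}_{p},M)$ for a finite unramified module $M$ of order prime to $p$ is not Tate's local Euler characteristic formula (which computes $\#H^{0}\cdot\#H^{2}/\#H^{1}$ for the full cohomology); it follows instead from the computation of $\hat{\mathbb{Z}}$-cohomology, namely $H^{1}_{\mathrm{ur}}\cong M/(\mathrm{Frob}-1)M$ and $H^{0}\cong\ker(\mathrm{Frob}-1)$, which have equal size since $M$ is finite. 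With that attribution fixed, the argument is complete.
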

\begin{proof}
Since $E/\mathbb{Q}$ has good reduction at $p$, the local condition is the unramified subgroup
$\mathcal{L}_{\varphi,p}(E/\mathbb{Q})=H^{1}_{\mathrm{ur}}(\mathbb{Q}_{p},E[\varphi])$ \cite[Lemma~4.1]{Cassels1965}.  
Via the identification $H^{1}(\mathbb{Q}_{p},E[\varphi])\cong \mathbb{Q}_{p}^{\times}/\mathbb{Q}_{p}^{\times 2}$, 
the unramified subgroup identifies with the cyclic subgroup generated by a non-square unit in $\mathbb{Q}_{p}^{\times}$.  
Under the identification $H^{1}(\mathbb{Q},E[\varphi])\cong \mathbb{Q}^{\times}/\mathbb{Q}^{\times 2}$, 
the restriction map on cohomology is the homomorphism 
$\alpha \bmod \mathbb{Q}^{\times 2}\mapsto \alpha \bmod \mathbb{Q}_{p}^{\times 2}$.  
Modulo the unramified subgroup, this becomes $\alpha \mapsto p^{v_p(\alpha)} \tmod \mathbb{Q}_{p}^{\times 2}.$\qedhere
\end{proof}

\subsection{Variation of $\varphi$–Selmer groups under quadratic twisting}
\label{subsec_variaitons_in_selmer}
We now let $d$ be a squarefree integer, and write $E^{d}$ for the quadratic twist of $E/\mathbb{Q}$ by $K=\mathbb{Q}(\sqrt{d})$.  
The curve $E^{d}$ admits an affine model
$
E^{d}:\ y^{2}=x(x^{2}+a d\,x+b d^{2}),
$
and we also denote by $\varphi$ the isogeny $E^{d} \to (E^{d})'$ with kernel generated by $(0,0)$. 
Over $K$, there is an isomorphism
$$
\psi_{d}:E \to E^{d}, 
\qquad 
(x,y)\mapsto (d x, d^{3/2} y),
$$
which induces by restriction a $\mathrm{Gal}(\overline{\mathbb{Q}}/\mathbb{Q})$–equivariant isomorphism 
\begin{equation*} \label{eq:iso_on_2}
\psi_{d}:E[\varphi] \;\xrightarrow{\ \sim\ }\; E^{d}[\varphi].
\end{equation*}
Therefore, we have an isomorphism on cohomology,
\begin{equation} \label{eq:iso_on_phi}
\psi_d^{*}:H^{1}(\mathbb{Q},E[\varphi])\;\xrightarrow{\ \sim\ }\; H^{1}(\mathbb{Q},E^{d}[\varphi]).
\end{equation}
Since $E^{d}[\varphi]$ and $E[\varphi]$ are isomorphic to $\mu_{2}$ by the map $(0,0) \mapsto -1$ as $\textup{Gal}(\overline{\mathbb{Q}}/\mathbb{Q})$-modules, then 
\begin{equation} \label{eq_1}
H^{1}(\mathbb{Q},E[\varphi])
\;\stackrel{\sim}\to\; H^{1}(\mathbb{Q},\mu_2) \stackrel{\sim}\to
\mathbb{Q}^{\times}/\mathbb{Q}^{\times 2}, 
\end{equation} and similarly 
\begin{equation} \label{eq_2}
H^{1}(\mathbb{Q},E^{d}[\varphi])
\;\stackrel{\sim}\to\; H^{1}(\mathbb{Q},\mu_2) \stackrel{\sim}\to
\mathbb{Q}^{\times}/\mathbb{Q}^{\times 2}.
\end{equation}

\begin{lemma} \label{lem:identification}
Under the identifications $H^{1}(\mathbb{Q},E[\varphi])\cong \mathbb{Q}^{\times}/\mathbb{Q}^{\times 2}$
and 
$H^{1}(\mathbb{Q},E^{d}[\varphi])\cong \mathbb{Q}^{\times}/\mathbb{Q}^{\times 2}$, the induced map $\psi_{d}^{*}:H^{1}(\mathbb{Q},E[\varphi])\to H^{1}(\mathbb{Q},E^{d}[\varphi])$ acts as the identity on $\mathbb{Q}^{\times}/\mathbb{Q}^{\times 2}$. In other words, the following diagram commutes:

\begin{center}
\begin{tikzcd}
{H^{1}(\mathbb{Q},E[\varphi])} \arrow[d, "\psi_d^{*}"'] \arrow[rd, "\eqref{eq_1}"] &                                           \\
{H^{1}(\mathbb{Q}, E^{d}[\varphi])} \arrow[r, "\eqref{eq_2}"']                     & \mathbb{Q}^{\times}/\mathbb{Q}^{\times 2}
\end{tikzcd}
\end{center}
The same conclusion holds upon replacing $\mathbb{Q}$ with $\mathbb{Q}_{v}$ for any place $v$.
\end{lemma}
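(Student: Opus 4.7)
The plan is to reduce the commutativity of the diagram to a direct computation on the kernels of the isogenies, after which the conclusion follows from naturality of $H^{1}$ and of the Kummer isomorphism. Concretely, both identifications $E[\varphi]\xrightarrow{\sim}\mu_{2}$ and $E^{d}[\varphi]\xrightarrow{\sim}\mu_{2}$ are defined by sending the nontrivial element $(0,0)$ to $-1$. Hence it suffices to verify that $\psi_{d}$, restricted to $E[\varphi]$, sends the generator $(0,0)\in E[\varphi]$ to the generator $(0,0)\in E^{d}[\varphi]$, so that after the chosen trivialisations it is literally the identity on $\mu_{2}$.

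For this step I would simply plug $(x,y)=(0,0)$ into the formula $\psi_{d}(x,y)=(dx,d^{3/2}y)$, obtaining $\psi_{d}(0,0)=(0,0)\in E^{d}$. Since $E[\varphi]=\{O,(0,0)\}$ and $E^{d}[\varphi]=\{O,(0,0)\}$, this shows that $\psi_{d}\mid_{E[\varphi]}\colon E[\varphi]\to E^{d}[\varphi]$ equals the identity once the two identifications with $\mu_{2}$ are inserted. Applying $H^{1}(\mathbb{Q},-)$ then furnishes the upper triangle of the diagram, and the functoriality of the Kummer isomorphism $H^{1}(\mathbb{Q},\mu_{2})\cong \mathbb{Q}^{\times}/\mathbb{Q}^{\times 2}$ closes it.

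The only point requiring care is that $\psi_{d}$ is defined only over $K=\mathbb{Q}(\sqrt{d})$, not over $\mathbb{Q}$, so the induced map on cohomology over $\mathbb{Q}$ needs a brief justification. This is exactly the content of the $G_{\mathbb{Q}}$-equivariant identification $\psi_{d}\colon E[\varphi]\xrightarrow{\sim} E^{d}[\varphi]$ recorded just above the lemma: although $\psi_{d}$ twists non-trivially on generic points, the two torsion points $(0,0)$ on $E$ and $E^{d}$ are both $\mathbb{Q}$-rational, so the restriction of $\psi_{d}$ to $E[\varphi]$ is automatically Galois-equivariant. With this in hand the proof is essentially formal, and the same argument \emph{verbatim} handles the local statement by replacing $\mathbb{Q}$ with $\mathbb{Q}_{v}$ throughout. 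I do not anticipate any genuine obstacle; the only place where care is required is in distinguishing $\psi_{d}$ on the full curve (merely $K$-rational) from its restriction to $E[\varphi]$ (which descends to $\mathbb{Q}$).
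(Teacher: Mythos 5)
Your proposal is correct and follows essentially the same route as the paper: the paper's proof likewise fixes the two trivialisations $j:E[\varphi]\to\mu_{2}$ and $j_{d}:E^{d}[\varphi]\to\mu_{2}$ determined by $(0,0)\mapsto -1$ and observes that $j_{d}\circ\psi_{d}=j$, from which the commutativity of the diagram follows by functoriality. Your additional remark distinguishing $\psi_{d}$ on the full curve (only $K$-rational) from its Galois-equivariant restriction to $E[\varphi]$ is a sound and welcome clarification of a point the paper records just before the lemma.
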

\begin{proof}
Fix isomorphisms $j:E[\varphi]\to\mu_{2}$ and $j_{d}:E^{d}[\varphi]\to\mu_{2}$ which, in both cases, are determined by $(0,0) \mapsto -1$. Then $j_{d}\circ\psi_{d}=j$ as homomorphisms $E[\varphi]\to\mu_{2}$.
\end{proof}

Via the identification of \eqref{eq:iso_on_phi}, we may regard the local Kummer images
\[
\mathcal{L}_{\varphi,v}(E) \subset H^{1}(\mathbb{Q}_{v},E[\varphi])
\quad\text{and}\quad
\mathcal{L}_{\varphi,v}(E^{d}) \subset H^{1}(\mathbb{Q}_{v},E^{d}[\varphi]) \cong H^{1}(\mathbb{Q}_{v},E[\varphi])
\]
as subgroups of the same cohomology group $H^{1}(\mathbb{Q}_{v}, E[\varphi])$. Similarly, we can identify the
corresponding global cohomology groups and view the global Selmer groups
$
\Sel^{\varphi}(E/\mathbb{Q}),\ \Sel^{\varphi}(E^{d}/\mathbb{Q})$ as subgroups of the same cohomology group, namely $H^{1}(\mathbb{Q},E[\varphi])$.

\subsection{Twisting by imaginary quadratic fields satisfying the Heegner hypothesis} \label{subsec_twisting}

For the remainder of this paper, we will focus on the effect on Selmer groups after twisting by $-d$, 
for $d$ belonging to the following sets.

\begin{definition} \label{def:primes}
We define
\[
\mathcal{Q}_{E/\mathbb{Q}}
=\Bigl\{\,q \text{ prime} : v \text{ splits in } \mathbb{Q}(\sqrt{-q}) 
\text{ for every } v \mid 2N_{E}\,\Bigr\},
\] 
and
\[
\mathcal{D}_{E/\mathbb{Q}}
=\Bigl\{\,d=\prod_{i=1}^{r} q_{i}\ :\ r\ge 1 \text{ odd},\ q_{i}\in \mathcal{Q}_{E/\mathbb{Q}}\ \text{distinct}\Bigr\}.
\]
\end{definition}
Equivalently, $\mathcal{Q}_{E/\mathbb{Q}}$ is the set of primes $q$ with $q \equiv 7 \tmod{8}$ and 
$\bigl(\tfrac{-q}{p}\bigr)=1$ for every odd prime $p$ where $E/\mathbb{Q}$ has bad reduction. It follows that $\mathcal{Q}_{E/\mathbb{Q}}$ is non-empty (in fact, infinite).  Similarly, for any $d\in \mathcal{D}_{E/\mathbb{Q}}$, all primes $v\mid 2N_{E}$ split in the quadratic extension $\mathbb{Q}(\sqrt{-d})/\mathbb{Q}$. In other words, the setting considered here is consistent with the \emph{Heegner hypothesis} considered in \S \ref{subsec_intro1}.
In order to understand how $\varphi$–Selmer groups vary under quadratic twisting, 
it is necessary to study how the local subgroups 
$\mathcal{L}_{\varphi,v}(E)$ change after passing to the quadratic twist $E^{-d}$. 
The following lemma shows that, away from the primes dividing $2N_{E}d\infty$
the local conditions remain unchanged.

\begin{lemma} \label{eq_local_conditions}
Let $d \in \mathcal{D}_{E/\mathbb{Q}}$.  
Under the natural identification 
$E[\varphi]\cong E^{-d}[\varphi]$, the local conditions 
$\mathcal{L}_{\varphi,v}(E)$ and $\mathcal{L}_{\varphi,v}(E^{-d})$ coincide as subgroups of 
$H^{1}(\mathbb{Q}_{v},E[\varphi])$ for every $v\mid 2N_{E}$ and every finite place $v\nmid 2dN_{E}$.
\end{lemma}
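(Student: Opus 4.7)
The plan is to split the verification into the two announced cases and, in each, reduce the identification of local subgroups across the twist to a statement inside $\mathbb{Q}_{v}^{\times}/\mathbb{Q}_{v}^{\times 2}$ using Lemma~\ref{lem:identification}. Concretely, after the identification $\psi_{-d}^{*}$ of cohomology groups, the assertion becomes that $\mathcal{L}_{\varphi,v}(E)$ and $\mathcal{L}_{\varphi,v}(E^{-d})$, viewed as subgroups of $\mathbb{Q}_{v}^{\times}/\mathbb{Q}_{v}^{\times 2}$ via the identifications \eqref{eq_1}--\eqref{eq_2}, coincide.

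For a place $v\mid 2N_{E}$, I would exploit the Heegner hypothesis directly. By definition of $\mathcal{D}_{E/\mathbb{Q}}$, $v$ splits in $\mathbb{Q}(\sqrt{-d})$, which is equivalent to saying that $-d\in(\mathbb{Q}_{v}^{\times})^{2}$. Fixing a square root of $-d$ in $\mathbb{Q}_{v}$, the base change of the isomorphism $\psi_{-d}:E\to E^{-d}$ to $\mathbb{Q}_{v}$ is defined over $\mathbb{Q}_{v}$ itself, so $\psi_{-d}$ intertwines the two local Kummer maps at $v$. Hence $\psi_{-d}^{*}$ carries $\mathcal{L}_{\varphi,v}(E)$ onto $\mathcal{L}_{\varphi,v}(E^{-d})$, and Lemma~\ref{lem:identification} then identifies them as the same subgroup of $\mathbb{Q}_{v}^{\times}/\mathbb{Q}_{v}^{\times 2}$.

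For a finite place $v\nmid 2dN_{E}$, I would invoke Lemma~\ref{lem:unramified_condition}. Both $E$ and $E^{-d}$ have good reduction at such $v$: for $E$ this is because $v\nmid N_{E}$, while for $E^{-d}$ the Weierstrass model $y^{2}=x(x^{2}-adx+bd^{2})$ has discriminant $d^{6}\Delta_{E}$, a $v$-adic unit times $\Delta_{E}$ since $v\nmid 2d$. By Lemma~\ref{lem:unramified_condition}, both $\mathcal{L}_{\varphi,v}(E)$ and $\mathcal{L}_{\varphi,v}(E^{-d})$ are the unramified subgroup $H^{1}_{\mathrm{ur}}(\mathbb{Q}_{v},\mu_{2})\subset H^{1}(\mathbb{Q}_{v},\mu_{2})\cong\mathbb{Q}_{v}^{\times}/\mathbb{Q}_{v}^{\times 2}$, namely the subgroup of classes generated by units. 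Combined with Lemma~\ref{lem:identification}, this gives equality inside the common cohomology group.

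No single step is genuinely hard; the only subtlety worth flagging is bookkeeping the identifications correctly so that ``$\mathcal{L}_{\varphi,v}(E)$'' and ``$\mathcal{L}_{\varphi,v}(E^{-d})$'' are honestly compared in the same ambient group. That is precisely what Lemma~\ref{lem:identification} arranges, which is why the proof ultimately reduces to the two elementary observations above: splitting of $v$ in $\mathbb{Q}(\sqrt{-d})$ for $v\mid 2N_{E}$, and preservation of good reduction under twisting by a $v$-adic unit for $v\nmid 2dN_{E}$.
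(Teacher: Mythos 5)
Your proposal is correct and follows essentially the same route as the paper: for $v\mid 2N_{E}$ the splitting of $v$ in $\mathbb{Q}(\sqrt{-d})$ makes $\psi_{-d}$ a $\mathbb{Q}_{v}$-isomorphism identifying the local Kummer images, and for $v\nmid 2dN_{E}$ both curves have good reduction so both local conditions are the unramified subgroup, with Lemma~\ref{lem:identification} supplying the common ambient group in each case. The only cosmetic difference is that you make explicit the discriminant computation and the equivalence ``$v$ splits $\Leftrightarrow$ $-d\in\mathbb{Q}_{v}^{\times2}$'', which the paper leaves implicit.
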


\begin{proof}
If $v\mid 2N_{E}$, then by the definition of $\mathcal{D}_{E/\mathbb{Q}}$, $v$ splits in $\mathbb{Q}(\sqrt{-d})$.  
Hence $E\cong E^{-d}$ and $E'\cong (E^{-d})'$ over $\mathbb{Q}_{v}$. The local Kummer maps for $E$ and $E^{-d}$ then identify under this isomorphism, so that $\mathcal{L}_{\varphi,v}(E)=\mathcal{L}_{\varphi,v}(E^{-d})$ whenever $v\mid 2N_E$.  
If $v\nmid 2N_{E}d\infty$, then $v$ is an odd prime and both $E$ and $E'$ have good reduction at $v$. By \cite[Lemma~4.1]{Cassels1965}, the local subgroups coincide with the unramified subgroup $H^{1}_{\mathrm{ur}}(\mathbb{Q}_{v},E[\varphi])$ and $H^{1}_{\mathrm{ur}}(\mathbb{Q}_{v},E^{-d}[\varphi])$ respectively. Since the isomorphism $H^{1}(\mathbb{Q}_{v},E^{-d}[\varphi]) \stackrel{\sim}\to H^{1}(\mathbb{Q}_{v},E[\varphi])$ of Lemma \ref{lem:identification} carries $H_{\mathrm{ur}}^{1}(\mathbb{Q}_{v},E^{-d}[\varphi])$ onto $H_{\mathrm{ur}}^{1}(\mathbb{Q}_{v},E[\varphi])$, which proves the second assertion. \qedhere
\end{proof}

In view of Lemma \ref{eq_local_conditions}, any variation in $\varphi$–Selmer groups under twisting by $-d$ can only arise from the local conditions at the places $q \mid d$ and $\infty$.

\subsection{Twisting local conditions when $v | d$}
We now turn to the local conditions at the primes dividing~$d$. 
In particular, we express the local Selmer conditions explicitly 
in terms of the signs of~$b$ and~$\Delta_{E}$ in any integral Weierstrass model for $E/\mathbb{Q}$.
\begin{lemma}\label{lem:b-Delta-model-independence}
Let $E/\Q$ be given by a Weierstrass integral model
$
y^{2}=x\bigl(x^{2}+a x+b\bigr)
$ with $a ,b \in \mathbb{Z}$.
Then, for every $q\in\mathcal{Q}_{E/\Q}$ one has
\[
\Bigl(\tfrac{b}{q}\Bigr)=
\begin{cases}
+1,& b>0,\\
-1,& b<0,
\end{cases}
\qquad\text{and}\qquad
\Bigl(\tfrac{\Delta_{E}}{q}\Bigr)=
\begin{cases}
+1,& \Delta_{E}>0,\\
-1,& \Delta_{E}<0,
\end{cases}
\]
independently of the chosen model for $E/\mathbb{Q}$.
\end{lemma}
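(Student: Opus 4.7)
The plan is a direct multiplicative Legendre symbol calculation, exploiting the two arithmetic conditions encoded in $\mathcal{Q}_{E/\Q}$: the congruence $q\equiv 7\pmod 8$ and the relation $\bigl(\tfrac{-q}{p}\bigr)=1$ for every odd prime $p\mid N_{E}$. Model-independence is dispatched first, as a separate preliminary.

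Any two integral models $y^{2}=x(x^{2}+a_{i}x+b_{i})$ of $E/\Q$ sharing the distinguished $2$-torsion point $(0,0)$ are related by an admissible transformation $(x,y)\mapsto(u^{2}x,u^{3}y)$ with $u\in\Q^{\times}$, under which $b_{i}\mapsto u^{-4}b_{i}$ and $\Delta_{E}\mapsto u^{-12}\Delta_{E}$. Since $u^{4}$ and $u^{12}$ are rational squares, the classes of $b$ and $\Delta_{E}$ in $\Q^{\times}/\Q^{\times2}$ depend only on $E/\Q$, and hence so do the Legendre symbols $\bigl(\tfrac{b}{q}\bigr)$ and $\bigl(\tfrac{\Delta_{E}}{q}\bigr)$ once a representative with $q\nmid b\Delta_{E}$ is chosen (possible since $q\nmid 2N_{E}$). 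In particular the signs of $b$ and $\Delta_{E}$ are intrinsic to $E/\Q$.

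For the main computation, write $b=\varepsilon\cdot 2^{e_{0}}\prod_{p\text{ odd}}p^{e_{p}}$ with $\varepsilon=\operatorname{sgn}(b)$ and expand the symbol multiplicatively. Since $q\equiv 7\pmod 8$, one has $\bigl(\tfrac{-1}{q}\bigr)=-1$ and $\bigl(\tfrac{2}{q}\bigr)=+1$, so the $\varepsilon$-factor produces exactly the sign of $b$ and the power of $2$ contributes nothing. For odd $p\neq q$, quadratic reciprocity combined with $q\equiv 3\pmod 4$ yields $\bigl(\tfrac{p}{q}\bigr)=\bigl(\tfrac{-q}{p}\bigr)$, and it remains to show $\prod_{p\text{ odd}}\bigl(\tfrac{-q}{p}\bigr)^{e_{p}}=1$. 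For $p\mid N_{E}$ this factor equals $+1$ by the definition of $\mathcal{Q}_{E/\Q}$.

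The heart of the argument is showing that $e_{p}$ is even for any odd $p\nmid N_{E}$ with $p\mid b$. Since $E$ has good reduction at such $p$, a local minimal Weierstrass model of the form $y^{2}=x(x^{2}+a_{\min}x+b_{\min})$ at $p$ can be arranged (translating the integral $2$-torsion $x$-coordinate to $0$ and completing the square in $y$ are legal at odd $p$); this model has unit discriminant, so $v_{p}(b_{\min})=0$. The original model differs from this one by $(x,y)\mapsto(u^{2}x,u^{3}y)$ for some $u\in\Q_{p}^{\times}$, whence $v_{p}(b)=4v_{p}(u)\in 2\Z$. The same recipe, applied with $\Delta_{E}\equiv a^{2}-4b\pmod{\Q^{\times2}}$ and $v_{p}(\Delta_{E})=12v_{p}(u)$, gives the analogue for the discriminant. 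Beyond this local minimal model reduction—the only substantive obstacle—the remainder is routine.
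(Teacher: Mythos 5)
Your proof is correct, but it reaches the key intermediate fact by a different route than the paper. Both arguments reduce to the same endgame: once the square class of $b$ (resp.\ $\Delta_{E}$) is represented by $\pm$ an integer supported on the primes dividing $N_{E}$, the congruence $q\equiv 7\pmod 8$ kills the contributions of $-1$ giving the sign, of $2$, and --- via quadratic reciprocity, $\bigl(\tfrac{p}{q}\bigr)=\bigl(\tfrac{-q}{p}\bigr)=1$ --- of the odd bad primes. Where you differ is in establishing that reduction. The paper observes that $\Delta_{E'}=256\,b(a^{2}-4b)^{2}\equiv b \bmod \Q^{\times 2}$, so $b$ lies in the square class of the minimal discriminant of the isogenous curve $E'$, which is automatically supported on $N_{E'}=N_{E}$; no local analysis is needed. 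You instead show directly that $v_{p}(b)$ and $v_{p}(\Delta_{E})$ are even at every odd prime $p$ of good reduction, by exhibiting a local model of the same shape with the same marked $2$-torsion point and unit discriminant, and noting that the change of model scales $b$ by $u^{-4}$ and $\Delta_{E}$ by $u^{-12}$. Your local-minimality argument is sound (at odd $p$ the $x$-coordinate of the $2$-torsion point is a root of a monic $p$-integral cubic, hence $p$-integral, so the translation is admissible, and the residual transformation is indeed of the form $(x,y)\mapsto(u^{2}x,u^{3}y)$ since both models share the marked point and the shape $y^{2}=f(x)$). The paper's device is shorter and sidesteps good primes entirely; yours is more self-contained and, applied at $p=q$ itself, also justifies cleanly why the Legendre symbols $\bigl(\tfrac{b}{q}\bigr)$, $\bigl(\tfrac{\Delta_{E}}{q}\bigr)$ are nonzero --- a point you assert somewhat circularly in your preliminary paragraph (``possible since $q\nmid 2N_{E}$'') before actually proving it later; reordering would tidy this up.
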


\begin{proof}
The $2$-isogenous curve $E'$ admits an affine model of the form  
$y^2 = x(x^2 - 2ax + a^2 - 4b)$. Therefore, $16b(a^2 - 4b)^2$ is, up to a square in $\Q^{\times}$, a minimal discriminant for $E'$.  
Hence $b \equiv (\Delta_{E'})_{\textup{min}} \bmod \Q^{\times 2}$.  Since $E$ and $E'$ have the same set of bad primes, then $(\Delta_{E'})_{\textup{min}}$ is supported only on primes dividing $N_E$.  
Let $q \in \mathcal{Q}_{E/\Q}$.  
By quadratic reciprocity,  
$
( \frac{(\Delta_{E'})_{\textup{min}}}{q})
=
( \frac{\pm 1}{q} ),
$ depending on the sign of the discriminant.
Since $b$ and $(\Delta_{E'})_{\textup{min}}$ have the same sign, then $
( \frac{b}{q} )
=
( \frac{\operatorname{sgn}(b)}{q} ).
$
As $q \equiv 7 \bmod 8$, the result follows. Finally, since $\Delta_E \equiv (\Delta_E)_{\textup{min}} \bmod \Q^{\times 2}$, the same reasoning applies with $b$ replaced by $\Delta_E$.
\end{proof}

To describe the resulting change in the local conditions, we introduce the following notation.
We recall that for each place $v$ of $\mathbb{Q}$, the \emph{quadratic Hilbert symbol}
\[
(\,\cdot\,,\,\cdot\,)_{v}:\;
\mathbb{Q}_{v}^{\times}/\mathbb{Q}_{v}^{\times 2}\times
\mathbb{Q}_{v}^{\times}/\mathbb{Q}_{v}^{\times 2}
\longrightarrow \mu_{2}
\]
is the symmetric bilinear pairing defined by
$(\alpha,\beta)_{v}=1$ if and only if $\alpha$ is a norm from $\mathbb{Q}_{v}(\sqrt{\beta})$.

\begin{notation} \label{not:T_d}
Assume $b>0$ and $\Delta_{E}>0$, and fix $d\in\mathcal{D}_{E/\mathbb{Q}}$.

\begin{enumerate}
\item For every prime $q\mid d$, Lemma \ref{lem:b-Delta-model-independence} gives
$\bigl(\tfrac{b}{q}\bigr)=\bigl(\tfrac{\Delta_{E}}{q}\bigr)=1$
which in turn ensure that the polynomial
$
x^{2}-2a x+(a^{2}-4b)
$
splits over $\mathbb{Q}_{q}$.  Its roots are $q$–adic units, and we fix one of them,
denoted $\beta\in\mathbb{Z}_{q}^{\times}$.

\item We define
\[
\mathcal{T}_{d}^{+}=\bigl\{\,q \mid d:\bigl(\tfrac{d_{q}}{q}\bigr)\ne(\beta,q)_{q}\bigr\},\qquad
\mathcal{T}_{d}^{-}=\bigl\{\,q \mid d:\bigl(\tfrac{d_{q}}{q}\bigr)=(\beta,q)_{q}\bigr\},
\]
where $\bigl(\tfrac{\cdot}{q}\bigr)$ is the Legendre symbol, and $d_q = \frac{d}{q}$.
\end{enumerate}
\end{notation}

\begin{remark}\label{rem:Tdpminvariance}
We note the following:
\begin{enumerate}
\item The sets $\mathcal{T}_{d}^{\pm}$ are independent of the chosen model $y^{2}=x(x^{2}+a x+b)$ of $E/\Q$.  
Indeed, any other model of the same form satisfies $a'=u^{-2}a$, $b'=u^{-4}b$, and the corresponding root $\beta'=u^{-2}\beta$, whence $(\beta’,q)_{q}=(\beta,q)_{q}$ for every $q\mid d$ 

\item In addition, the sets $\mathcal{T}_{d}^{\pm}$ are independent of the choice of root $\beta$. Indeed, by \cite[Proposition~5.5]{Voight2013}, for any $\xi\in\mathbb{Q}_{q}^{\times}$, one has

\[
(\xi,q)_{q}=
\begin{cases}
1, & \text{if $\xi\in\mathbb{Q}_{q}^{\times 2}$},\\[2pt]
-1, & \text{otherwise.}
\end{cases}
\]
Therefore, if $\overline{\beta}$ is the other root of $x^{2}-2a x+(a^{2}-4b)$, then
$\beta\overline{\beta}=\frac{\Delta_{E}}{b^{2}}$, and since
$({\Delta_{E}},q)_q=1$ for all $q\mid d$ (see Lemma \ref{lem:b-Delta-model-independence}), then one has
$(\beta,q)_{q}=(\overline{\beta},q)_{q}$.
\end{enumerate}
\end{remark}

\begin{lemma}\label{loc_conditions_at_q}
For each prime $q\mid d \in \mathcal{D}_{E/\mathbb{Q}}$, we regard 
$\mathcal{L}_{\varphi,q}(E^{-d}/\mathbb{Q})$ as a subgroup of 
$H^{1}(\mathbb{Q}_{q},E[\varphi])$.
Then exactly one of the following holds:

\begin{enumerate}
\item[\textup{(i)}] If $b<0$, the local subgroup at $q$ is trivial:
\[
\mathcal{L}_{\varphi,q}(E^{-d}/\mathbb{Q})=\{0\}.
\]

\item[\textup{(ii)}] If $\Delta_{E}<0$ and $b>0$, the local subgroup at $q$ is maximal:
\[
\mathcal{L}_{\varphi,q}(E^{-d}/\mathbb{Q})
=H^{1}(\mathbb{Q}_{q},E[\varphi]).
\]

\item[\textup{(iii)}] If $\Delta_{E}>0$ and $b>0$, the local subgroup at $q$ is one-dimensional. In addition, under the identification $H^{1}(\mathbb{Q}_{q},E[\varphi])= \mathbb{Q}_{q}^{\times}/\mathbb{Q}_{q}^{\times 2} $, one has
\[
\mathcal{L}_{\varphi,q}(E^{-d}/\mathbb{Q})
=
\begin{cases}
\langle q\rangle,  & \text{if } q\in \mathcal{T}_{d}^{+},\\[4pt]
\langle -q\rangle, & \text{if } q\in \mathcal{T}_{d}^{-}.
\end{cases}
\]
\end{enumerate}
\end{lemma}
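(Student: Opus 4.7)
The strategy is to unwind the local Kummer map $\delta_{\varphi,q}$ on $(E^{-d})'(\mathbb{Q}_q)$ explicitly via the formula in \eqref{eq:mu_phi}. Since $q\equiv 7\pmod 8$, the group $\mathbb{Q}_q^{\times}/\mathbb{Q}_q^{\times 2}$ consists of the four classes $\{[1],[-1],[q],[-q]\}$, and the task reduces to identifying which of them are realized. The $2$-torsion point $(0,0)$ contributes $[\Delta_{E^{-d}}] = [\Delta_E]$, which, by Lemma~\ref{lem:b-Delta-model-independence} together with $v_q(\Delta_E)=0$, equals $[1]$ when $\Delta_E>0$ and $[-1]$ when $\Delta_E<0$. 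For an affine point $(X,Y) \in (E^{-d})'(\mathbb{Q}_q)$ with $X\neq 0$, the defining relation $Y^2 = X g(X)$ (with $g(X) = X^2 + 2ad X + d^2(a^2-4b)$) shows that $[X]$ is in the image iff some representative satisfies $X g(X) \in \mathbb{Q}_q^{\times 2}$. Since $[X]$ depends only on $v_q(X)\bmod 2$, the decisive cases are $v_q(X)=0$ and $v_q(X)=1$.

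For $X \in \mathbb{Z}_q^{\times}$: the congruence $g(X) \equiv X^2 \pmod q$ (using $d \equiv 0 \pmod q$) combined with Hensel's lemma yields $X g(X) \equiv X \pmod{\mathbb{Q}_q^{\times 2}}$, so only the class $[1]$ arises from units. For $X=qu$ with $u\in\mathbb{Z}_q^{\times}$, writing $d = qd_q$ with $d_q\in\mathbb{Z}_q^{\times}$, one computes $g(qu) = q^2 h(u)$ where $h(u) = u^2 + 2ad_q u + d_q^2(a^2-4b)$. The discriminant of $h$ is $16bd_q^2$, which is a square in $\mathbb{Q}_q$ iff $b$ is a square in $\mathbb{Q}_q$, iff $b>0$ (Lemma~\ref{lem:b-Delta-model-independence}).

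In case (i) ($b<0$), $h$ has no root modulo $q$, so $h(u) \in \mathbb{Z}_q^{\times}$ for every $u\in\mathbb{Z}_q$; hence $v_q(X g(X)) = 3$, odd, giving no contribution. A direct check at higher $|v_q(X)|$ (using that $[\Delta_E]=[1]$ in this case) confirms no further classes arise, so $\mathcal{L}_{\varphi,q}(E^{-d}/\mathbb{Q}) = \{[1]\}$.

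In cases (ii) and (iii) ($b>0$), $h$ splits over $\mathbb{Q}_q$ as $h(u) = (u+d_q\beta)(u+d_q\bar\beta)$ using the unit roots $\beta,\bar\beta = a\pm 2\sqrt{b}$ of $x^2 - 2ax + (a^2-4b)$ from Notation~\ref{not:T_d}. For $X g(X) = q^3 u h(u)$ to be a square, exactly one factor of $h(u)$ must have odd $q$-valuation, forcing either $u \equiv -d_q\beta$ or $u \equiv -d_q\bar\beta \pmod q$. Taking $u = -d_q\beta + qv$ with $v \in \mathbb{Z}_q^{\times}$ and reducing modulo squares yields $X g(X) \equiv \beta\sqrt{b}\,v \pmod{\mathbb{Q}_q^{\times 2}}$; one may then select $v$ with $(\tfrac{v}{q}) = (\tfrac{\beta\sqrt{b}}{q})$ to realize the class $[X] = -(\tfrac{d_q}{q})(\tfrac{\beta}{q})\cdot[q]$. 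The symmetric subcase $u \equiv -d_q\bar\beta\pmod q$ realizes $-(\tfrac{d_q}{q})(\tfrac{\bar\beta}{q})\cdot[q]$. Since $(\tfrac{\beta\bar\beta}{q}) = (\tfrac{a^2-4b}{q}) = (\tfrac{\Delta_E}{q})$, one has $(\tfrac{\beta}{q}) = (\tfrac{\bar\beta}{q})$ in case (iii) and $(\tfrac{\beta}{q}) \neq (\tfrac{\bar\beta}{q})$ in case (ii). In case (iii), the two subcases collapse to a single class, which together with $(0,0)\mapsto [1]$ yields a one-dimensional image; using $(\beta,q)_q = (\tfrac{\beta}{q})$ (Remark~\ref{rem:Tdpminvariance}), this class equals $[q]$ for $q\in\mathcal{T}_d^+$ and $[-q]$ for $q\in\mathcal{T}_d^-$. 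In case (ii), both $[q]$ and $[-q]$ are realized, and combined with $(0,0)\mapsto [-1]$ and $[1]$ they exhaust $H^1(\mathbb{Q}_q,E[\varphi])$. The main obstacle is the precise bookkeeping of Legendre and Hilbert symbols needed to match the one-dimensional image in case (iii) with the dichotomy $\mathcal{T}_d^{\pm}$; the rest is a careful but routine case analysis.
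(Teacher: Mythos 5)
Your proposal is correct, but it takes a genuinely different route from the paper. The paper's proof rests on a citation of \cite[Lemma 3.7]{Klagsbrun2017}, which identifies $\mathcal{L}_{\varphi,q}(E^{-d})$ with $(E^{-d})'(\mathbb{Q}_q)[2]/\varphi(E^{-d}(\mathbb{Q}_q)[2])$; the dimension count then falls out of the $2$-torsion structure (governed by $(\tfrac{\Delta_E}{q})$ and $(\tfrac{b}{q})$ via Lemma~\ref{lem:b-Delta-model-independence}), and in case (iii) the generator is read off as $\mu_{\varphi}$ of the $2$-torsion point $(-d\beta,0)$, i.e.\ the class of $-d\beta$. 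You instead compute the full Kummer image directly from the Weierstrass model, stratifying $x$-coordinates by $q$-adic valuation; your analysis of the stratum $v_q(X)=1$, where the condition $v_q(h(u))$ odd forces $u\equiv -d_q\beta$ or $-d_q\bar{\beta}$, recovers exactly the same generator $[q\cdot(-d_q\beta)]=[-d\beta]$, and your symbol bookkeeping matches the paper's dichotomy $\mathcal{T}_d^{\pm}$ (via $(\beta,q)_q=(\tfrac{\beta}{q})$ for the unit $\beta$). What your approach buys is self-containedness — it effectively reproves the content of Klagsbrun's lemma in this special case — at the cost of length; the paper's approach is shorter but imports the structural fact. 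Two small points to tighten if you write this up in full: (a) the existence of a point with a given $x$-coordinate class is not determined by $v_q(X)\bmod 2$ alone, so the strata $v_q(X)\ge 2$, $v_q(X)<0$, $X=0$ and the identity must each be checked explicitly (they all contribute only classes already found, as your parenthetical checks indicate, but this should be spelled out since the claim is an equality of subgroups, not just an inclusion); (b) the chosen integral model need not be minimal at $q$, so one should say $v_q(\Delta_E)$ is even (as $\Delta_E\equiv(\Delta_E)_{\min}\bmod \mathbb{Q}^{\times 2}$) rather than zero — this suffices for $[\Delta_{E^{-d}}]=[\Delta_E]$ to be the unit class with the stated Legendre symbol.
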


\begin{proof}
By \cite[Lemma 3.7]{Klagsbrun2017}, we have
\begin{equation} \label{eq:loc_cond_at_q}
\mathcal{L}_{\varphi,q}(E^{-d})
\;=\;(E^{-d})'(\mathbb{Q}_{q})[2]\big/{\varphi}\! \ (E^{-d}(\mathbb{Q}_{q})[2]).
\end{equation}
Adding on, the $2$–torsion structures are determined by the following dichotomies:
\begin{equation} \label{eq:2-tors_structures}
E^{-d}(\mathbb{Q}_{q})[2]\cong
\begin{cases}
(\mathbb{Z}/2\mathbb{Z})^{2}, & (\tfrac{\Delta_{E}}{q})=1,\\
\mathbb{Z}/2\mathbb{Z}, & (\tfrac{\Delta_{E}}{q})=-1,
\end{cases}
\qquad
(E^{-d})'(\mathbb{Q}_{q})[2]\cong
\begin{cases}
(\mathbb{Z}/2\mathbb{Z})^{2}, & (\tfrac{b}{q})=1,\\
\mathbb{Z}/2\mathbb{Z}, & (\tfrac{b}{q})=-1.
\end{cases}
\end{equation}
Combining Lemma \ref{lem:b-Delta-model-independence} with \eqref{eq:loc_cond_at_q}--\eqref{eq:2-tors_structures} gives
\[
\dim_{\mathbb{F}_{2}}\mathcal{L}_{\varphi,q}(E^{-d})=
\begin{cases}
2, & \Delta_{E}<0,\; b>0,\\
1, & \Delta_{E}>0,\; b>0,\\
0, & b<0.
\end{cases}
\]
Note that when $b<0$ one has $\Delta_{E}=b^{2}(a^{2}-4b)>0$, so the above cases are mutually exclusive and collectively exhaustive.  
This completes the proof of part~(i). Since $\dim_{\mathbb{F}_{2}}H^{1}(\mathbb{Q}_{q},E[\varphi])=2$, part~(ii) follows immediately. Now suppose $\Delta_{E}>0$ and $b>0$ and consider the affine models
\[
E' : y^{2}=x\bigl(x^{2}-2ax+(a^{2}-4b)\bigr),\qquad
(E^{-d})' : y^{2}=x\bigl(x^{2}+2ad x+d^{2}(a^{2}-4b)\bigr).
\]
Since $\bigl(\tfrac{\Delta_{E'}}{q}\bigr)=\bigl(\tfrac{b}{q}\bigr)=1$, the quadratic $x^{2}-2ax+(a^{2}-4b)$ splits over $\mathbb{Q}_{q}$ with roots
$\beta,\overline{\beta}\in\mathbb{Z}_{q}^{\times}$. Their product is $\beta\overline{\beta}=a^{2}-4b = \frac{\Delta_E}{b^2}$, from which we deduce that $\beta,\overline{\beta}$ are both units at $q$. Twisting by $-d$ sends these roots to $-d\beta$ and $-d\overline{\beta}$, and therefore
$
\mathcal{L}_{\varphi,q}(E^{-d})
$
is one–dimensional, and under the equality of \eqref{eq:loc_cond_at_q}, is generated by the class of $(-d\beta,0) \in (E^{-d})'(\mathbb{Q}_{q})[2]$. The image of this point under 
\eqref{eq:mu_phi} in $H^{1}(\mathbb{Q}_{q},E^{-d}[\varphi])\cong \mathbb{Q}_{q}^{\times}/\mathbb{Q}_{q}^{\times 2}$ 
is given by $-d\beta \bmod \mathbb{Q}_{q}^{\times 2}$.  
Since each $q\mid d$ satisfies $q\equiv 7 \tmod{8}$, then $-1$ is not a square in $\mathbb{Q}_{q}^{\times}$. As a result,
$-d\beta \equiv q \bmod \mathbb{Q}_{q}^{\times 2}$ if $q \in \mathcal{T}_{d}^{+}$,  and $-d\beta \equiv -q \bmod \mathbb{Q}_{q}^{\times 2}$ otherwise. By Lemma~\ref{lem:identification}, these subgroups remain unchanged under the isomorphism $H^{1}(\mathbb{Q}_{q},E^{-d}[\varphi]) \stackrel{\sim}\to H^{1}(\mathbb{Q}_{q},E[\varphi])$, which completes the proof of part (iii).
\end{proof}
\subsection{Twisting local conditions when $v = \infty$} We now study the local condition at the archimedean place.
In this case, its behaviour depends on the signs of $a$, $b$, and $\Delta_{E}$.

\begin{lemma} \label{lem:infinity} 
Let $E/\mathbb{Q}$ be an elliptic curve with affine model $E: y^{2}=x(x^{2}+ax+b)$.  
The local condition at the real place satisfies
\[
\mathcal{L}_{\varphi,\infty}(E)=
\begin{cases}
\{1\}, & \text{if } b<0 \ \text{or}\ \bigl(b>0,\ \Delta_{E}>0,\ a > 0\bigr),\\[4pt]
H^{1}(\mathbb{R},E[\varphi]), & \text{if } \bigl(b>0,\ \Delta_{E}<0\bigr)\ \text{or}\ \bigl(b>0,\ \Delta_{E}>0,\ a<0\bigr).
\end{cases}
\]
\end{lemma}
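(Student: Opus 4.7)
The plan is to compute $\mathcal{L}_{\varphi,\infty}(E)$ by combining the explicit description of $\mu_{\varphi}$ from \eqref{eq:mu_phi} with the topology of $E'(\mathbb{R})$. Under the identification $H^{1}(\mathbb{R},E[\varphi])\cong\mathbb{R}^{\times}/\mathbb{R}^{\times 2}=\{\pm 1\}$, the subgroup $\mathcal{L}_{\varphi,\infty}(E)$ is generated by the classes of $x(P)$ for $P\in E'(\mathbb{R})\setminus\{O,(0,0)\}$ together with the class of $\Delta_{E}$ (the image of $(0,0)$ under $\mu_{\varphi}$). Since the target has order two, $\mathcal{L}_{\varphi,\infty}(E)$ is trivial precisely when all such $x(P)$ are positive \emph{and} $\Delta_{E}>0$; otherwise it equals $H^{1}(\mathbb{R},E[\varphi])$ in full. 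The problem thus reduces to determining the signs of the $x$-coordinates on $E'(\mathbb{R})$ and of $\Delta_{E}=16b^{2}(a^{2}-4b)$.

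The relevant cubic is $x(x^{2}-2ax+(a^{2}-4b))$, whose roots are $0$ and $a\pm 2\sqrt{b}$; the latter two are real precisely when $b\geq 0$. When $b<0$, the quadratic factor has negative discriminant $16b<0$, is strictly positive on $\mathbb{R}$, so $y^{2}\geq 0$ forces $x\geq 0$ on $E'(\mathbb{R})$; moreover $a^{2}-4b>0$, hence $\Delta_{E}>0$. Both conditions are satisfied, giving $\mathcal{L}_{\varphi,\infty}(E)=\{1\}$.

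When $b>0$, the cubic has three distinct real roots and $E'(\mathbb{R})$ decomposes into a bounded ``egg'' $[r_{1},r_{2}]$ and an unbounded component $[r_{3},\infty)$, where $r_{1}<r_{2}<r_{3}$ are the sorted roots. I would then split into three sub-cases according to the signs of $\Delta_{E}$ and $a$: (i) if $\Delta_{E}<0$, then $|a|<2\sqrt{b}$ and the sorted roots are $a-2\sqrt{b}<0<a+2\sqrt{b}$, so the egg lies in $x<0$; (ii) if $\Delta_{E}>0$ and $a<0$, all nonzero roots are negative and the egg again sits in $x<0$; (iii) if $\Delta_{E}>0$ and $a>0$, all three roots are non-negative and the entire real locus satisfies $x\geq 0$. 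In sub-cases (i) and (ii) some $x(P)<0$, so $-1\in\mathcal{L}_{\varphi,\infty}(E)$ and therefore $\mathcal{L}_{\varphi,\infty}(E)=H^{1}(\mathbb{R},E[\varphi])$; in sub-case (iii), combined with $\Delta_{E}>0$, one obtains $\mathcal{L}_{\varphi,\infty}(E)=\{1\}$. Together with the $b<0$ case this matches the dichotomy in the statement.

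I do not expect any serious obstacle: the argument is elementary real analysis combined with the explicit formula \eqref{eq:mu_phi} for $\mu_{\varphi}$. The only mild subtlety is to remember the contribution of $(0,0)$ to the generating set of $\mathcal{L}_{\varphi,\infty}(E)$, which enters through $\Delta_{E}$ rather than through an $x$-coordinate; but in each of the two ``trivial'' cases one checks directly that $\Delta_{E}>0$, so this contribution never produces a new generator.
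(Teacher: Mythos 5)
Your proposal is correct, but it proves the lemma by a genuinely different route than the paper. The paper's own proof first observes that $H^{1}(\mathbb{R},E[\varphi])\cong\mathbb{R}^{\times}/\mathbb{R}^{\times2}$ is one-dimensional, so that $\mathcal{L}_{\varphi,\infty}(E)$ is trivial or full according to whether $\varphi:E(\mathbb{R})\to E'(\mathbb{R})$ is surjective, and then simply cites the surjectivity table of \cite[\S7.1]{DD08} for the six sign configurations. You instead compute the image of the real Kummer map directly from the explicit formula \eqref{eq:mu_phi}: the image is generated by the signs of the $x$-coordinates on $E'(\mathbb{R})$ together with the sign of $\Delta_{E}\equiv a^{2}-4b \bmod \mathbb{R}^{\times2}$, and you read these off from the location of the roots $0$ and $a\pm2\sqrt{b}$ of the cubic defining $E'$. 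Your case analysis is complete and correct (in particular, the configurations $b>0,\ \Delta_{E}>0$ force $a\neq0$, and $b<0$ forces $\Delta_{E}>0$, so the dichotomy is exhaustive), and you rightly flag the one point that is easy to miss, namely the extra generator $\Delta_{E}$ coming from $(0,0)$, checking that it is positive in both ``trivial'' cases. What your approach buys is a self-contained, elementary verification that does not outsource the key computation to an external table; what the paper's approach buys is brevity and a formulation (surjectivity of $\varphi$ on real points) that is already catalogued in the literature. The two are of course equivalent, since surjectivity of $\varphi:E(\mathbb{R})\to E'(\mathbb{R})$ is exactly the triviality of the image of $\delta_{\varphi,\infty}$.
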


\begin{proof}
Since  $H^{1}(\mathbb{R},E[\varphi])\cong \mathbb{R}^{\times}/\mathbb{R}^{\times 2}$ is $1$–dimensional over $\mathbb{F}_{2}$, then $\mathcal{L}_{\varphi,\infty}(E/\mathbb{Q})$ is either $\{1\}$ or all of $H^{1}(\mathbb{R},E[\varphi])$. It thus suffices to decide whether $\varphi:E(\mathbb{R})\to E'(\mathbb{R})$ is surjective or not. This is done in \cite[\S 7.1]{DD08}:
\[
\begin{array}{|c@{\quad}c@{\quad}c |c|}
\hline
b & \Delta_{E} & a & \text{Conclusion} \\
\hline
- & + & + & \text{Surjective} \\
- & + & - & \text{Surjective} \\
\hline
+ & - & + & \text{Non-surjective} \\
+ & - & - & \text{Non-surjective} \\
+ & + & + & \text{Surjective} \\
+ & + & - & \text{Non-surjective} \\
\hline
\end{array} 
\]
This gives the cases listed in the statement. \qedhere
\end{proof}
We note that the cases listed in Lemma \ref{lem:infinity} are mutually exclusive and collectively exhaustive.

\begin{corollary} \label{cor:infinity_under_twist}
The local condition $\mathcal{L}_{\varphi,\infty}(E^{-d}/\mathbb{Q})$ at the real place satisfies
\[
\mathcal{L}_{\varphi,\infty}(E^{-d})=
\begin{cases}
\{1\}, & \text{if } b<0 \ \text{or}\ \bigl(b>0,\ \Delta_{E}>0,\ a<0\bigr),\\[4pt]
H^{1}(\mathbb{R},E[\varphi]), & \text{if } \bigl(b>0,\ \Delta_{E}<0\bigr)\ \text{or}\ \bigl(b>0,\ \Delta_{E}>0,\ a>0\bigr).
\end{cases}
\]
\end{corollary}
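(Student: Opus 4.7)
The plan is to derive Corollary~\ref{cor:infinity_under_twist} as a direct application of Lemma~\ref{lem:infinity} to the twisted curve $E^{-d}$, composed with the linear identification of Lemma~\ref{lem:identification} that lets us view $\mathcal{L}_{\varphi,\infty}(E^{-d})$ as a subgroup of $H^{1}(\mathbb{R},E[\varphi])$.

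First I would record the model data and the relevant signs. Since every prime in $\mathcal{Q}_{E/\mathbb{Q}}$ is $\equiv 7\pmod 8$, any $d\in\mathcal{D}_{E/\mathbb{Q}}$ is a positive integer. The curve $E^{-d}$ has the integral Weierstrass model $y^{2}=x(x^{2}+a'x+b')$ with $a'=-ad$ and $b'=bd^{2}$, and a direct expansion of $\Delta=16b^{2}(a^{2}-4b)$ gives $\Delta_{E^{-d}}=d^{6}\Delta_{E}$. Because $d>0$, this produces the sign dictionary
\[
\operatorname{sign}(b')=\operatorname{sign}(b),\qquad
\operatorname{sign}(\Delta_{E^{-d}})=\operatorname{sign}(\Delta_{E}),\qquad
\operatorname{sign}(a')=-\operatorname{sign}(a).
\]

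Next I would apply Lemma~\ref{lem:infinity} to $E^{-d}$, reading off each case in terms of $(a',b',\Delta_{E^{-d}})$, and then translate each case through the dictionary above. The triviality case $b'<0$ becomes $b<0$; the triviality case $(b'>0,\,\Delta_{E^{-d}}>0,\,a'>0)$ becomes $(b>0,\,\Delta_{E}>0,\,a<0)$; and the two surjectivity cases $(b'>0,\,\Delta_{E^{-d}}<0)$ and $(b'>0,\,\Delta_{E^{-d}}>0,\,a'<0)$ become $(b>0,\,\Delta_{E}<0)$ and $(b>0,\,\Delta_{E}>0,\,a>0)$ respectively. These are precisely the cases stated in the corollary, and they are mutually exclusive and exhaustive by the same reasoning as in Lemma~\ref{lem:infinity}. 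Finally, since the isomorphism of Lemma~\ref{lem:identification} is a linear bijection $H^{1}(\mathbb{R},E^{-d}[\varphi])\xrightarrow{\sim}H^{1}(\mathbb{R},E[\varphi])$, it sends $\{1\}$ to $\{1\}$ and the full group to the full group, so the case distinction is preserved when $\mathcal{L}_{\varphi,\infty}(E^{-d})$ is regarded as a subgroup of $H^{1}(\mathbb{R},E[\varphi])$.

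There is no substantial obstacle in this argument; the only delicate point is the sign bookkeeping, where the key asymmetry is that the linear coefficient $a$ scales by $-d$ under the twist while $b$ scales by $d^{2}$ and $\Delta_{E}$ scales by the square $d^{6}$. This single sign flip of $a$ is exactly what interchanges the roles of $a>0$ and $a<0$ between Lemma~\ref{lem:infinity} and its corollary.
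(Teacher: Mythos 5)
Your proposal is correct and follows essentially the same route as the paper, which likewise observes that twisting by $-d$ (with $d>0$) sends $a\mapsto -da$ and $b\mapsto bd^{2}$ and then invokes Lemma~\ref{lem:infinity}. Your additional remarks on the scaling $\Delta_{E^{-d}}=d^{6}\Delta_{E}$ and on the identification of Lemma~\ref{lem:identification} preserving the trivial and full subgroups are implicit in the paper's argument and correctly fill in the bookkeeping.
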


\begin{proof}
Twisting by $-d$ the coefficients transform as $a\mapsto -da$ and $b\mapsto bd^{2}$. The result now follows from Lemma \ref{lem:infinity}. \qedhere
\end{proof}

\section{Truncated structures and modified Selmer groups}
\label{sec_truncated}
In this section we introduce a collection of modified {Selmer structures} on $E[\varphi]$ and $E^{-d}[\varphi]$.  
These auxiliary structures provide a convenient framework for studying how Selmer groups vary under quadratic twisting.  
  
\subsection{Truncated and sign–modified Selmer structures}  \label{subsec_truncated}
We begin by recalling the relevant definitions and formulating the truncated Selmer structures that will be used in the sequel.

\begin{definition} \label{def:SelmerStructure}
Let $M$ be a finite $\mathrm{Gal}(\overline{\mathbb{Q}}/\mathbb{Q})$–module annihilated by $2$.  
Thus $M$ is a finite-dimensional $\mathbb{F}_{2}$–vector space. A \emph{Selmer structure} $\mathcal{L}=\{\mathcal{L}_{v}\}_{v}$ for $M$ is a collection of subspaces
\[
\mathcal{L}_{v} \;\subseteq\; H^{1}(\mathbb{Q}_{v},M)
\]
for each place $v$ of $\mathbb{Q}$, such that $\mathcal{L}_{v}$ is $H^{1}_{\mathrm{ur}}(\mathbb{Q}_{v},M):=
\ker\!\left( H^{1}(\mathbb{Q}_{v},M) \to H^{1}(I_{v},M)\right)$ for all but finitely many $v$, where $I_{v}\subset\mathrm{Gal}(\overline{\mathbb{Q}}_{v}/\mathbb{Q}_{v})$
denotes the inertia subgroup.  The \emph{Selmer group associated to $\mathcal{L}$} is then defined as
\[
\Sel^{\mathcal{L}}(\mathbb{Q},M)
\;=\;
\bigcap_{v}
\ker\!\Bigl( H^{1}(\mathbb{Q},M) \to H^{1}(\mathbb{Q}_{v},M)/\mathcal{L}_{v} \Bigr).
\]
\end{definition}

As a standard example, we have the usual $\varphi$–Selmer group. In particular for $E/\mathbb{Q}$ with partial $2$-torsion, then the local subgroups
$\mathcal{L}_{\varphi,v}$ of \eqref{eq:local_conditioss} form a Selmer structure for $E[\varphi]$, and the resulting Selmer group is precisely the classical $\varphi$–Selmer group $\Sel^{\varphi}(E/\mathbb{Q})$.

\begin{definition}\label{def:truncated_selmer}
Let $\mathcal{L}=\{\mathcal{L}_{v}\}_{v}$ be a Selmer structure on a finite
$\mathrm{Gal}(\overline{\mathbb{Q}}/\mathbb{Q})$–module $M$, and let $S$ be a finite set of places of~$\mathbb{Q}$.  
We define the \emph{truncated Selmer structure with respect to~$S$}, denoted
$\mathcal{L}_{S}=\{\mathcal{L}_{S,v}\}_{v}$, by
\[
\mathcal{L}_{S,v}=
\begin{cases}
H^{1}(\mathbb{Q}_{v},M), & \text{if } v\in S,\\[4pt]
\mathcal{L}_{v}, & \text{if } v\notin S.
\end{cases}
\]
We denote by $\Sel^{\mathcal{L}}_{S}(M/\mathbb{Q})$ the associated Selmer group, which satisfies
\[
\Sel^{\mathcal{L}}_{S}(\mathbb{Q}, M)
=\bigcap_{v\notin S}
\ker\!\left(
H^{1}(\mathbb{Q},M)\to
H^{1}(\mathbb{Q}_{v},M)\big/\mathcal{L}_{v}
\right).
\]
\end{definition}

The truncated Selmer group serves as an intermediate object in the computation of the usual Selmer group, obtained by temporarily omitting the local conditions at the places in~$S$.  
In particular, the usual Selmer group is obtained once the local subgroups $\mathcal{L}_{v}$ for $v\in S$ are reinstated.  
As we will see, this formulation is convenient for isolating and studying the contribution of individual primes to the Selmer group structure.

\begin{definition} \label{def:plus_selmer}
Let $\mathcal{L}=\{\mathcal{L}_{v}\}_{v}$ be a Selmer structure on a finite 
$\mathrm{Gal}(\overline{\mathbb{Q}}/\mathbb{Q})$–module $M$.  
We define the Selmer structure
$\mathcal{L}^{+}=\{\mathcal{L}^{+}_{v}\}_{v}$ by
\[
\mathcal{L}^{+}_{v}=
\begin{cases}
\mathcal{L}_{v}, & v\neq \infty,\\[4pt]
\{0\}, & v=\infty,
\end{cases}
\]
where $\{1\}$ denotes the trivial subgroup of $H^{1}(\mathbb{R},M)$.
The corresponding Selmer group is denoted
\[
\Sel^{\mathcal{L}}(\mathbb{Q},M)^{+}
\;:=\;
\Sel^{\mathcal{L}^{+}}(\mathbb{Q},M).
\]
\end{definition}

\begin{remark}
When $M=\mu_{2}$, any Selmer group $\Sel^{\mathcal{L}}(\mathbb{Q},\mu_{2})$ 
can be identified with a subspace of 
$H^{1}(\mathbb{Q},\mu_{2})\cong\mathbb{Q}^{\times}/\mathbb{Q}^{\times 2}$.
Under this identification, the modified group 
$\Sel^{\mathcal{L}}(\mathbb{Q},\mu_{2})^{+}$ is the subgroup of $\Sel^{\mathcal{L}}(\mathbb{Q},\mu_{2})$ consisting of those square classes admitting a {positive} representative in $\mathbb{Q}^{\times}$.
This interpretation motivates the superscript “$+$” in the notation.
\end{remark}

In the sequel we restrict to the case $M=E[\varphi]\cong\mu_{2}$,
so that $H^{1}(\mathbb{Q},E[\varphi])\cong\mathbb{Q}^{\times}/\mathbb{Q}^{\times 2}$. To this end, we fix the following notation.

\begin{definition} \label{def:truncated_selmer}
Let $S$ be a finite set of place for $\mathbb{Q}$. We define \emph{truncated $\varphi$-Selmer group with respect to $S$}, denoted  $\Sel^{\varphi}_{S}(E/\mathbb{Q})$, to be the Selmer group associated to the truncated Selmer structure $\mathcal{L}_{S}$ considered with $\mathcal{L}=\{\mathcal{L}_{\varphi,v}(E/\mathbb{Q})\}_v$. 

We define \emph{truncated $\varphi$-Selmer group with respect to $S$}, denoted  $\Sel^{\varphi}_{S}(E/\mathbb{Q})$, to be the truncated Selmer group 
$\Sel^{\mathcal{L}}_{S}(\mathbb{Q},E[\varphi])$ of Definition~\ref{def:truncated_selmer}, 
considered with respect to
$\mathcal{L}_{\varphi,v}(E/\mathbb{Q})$ of \eqref{eq:local_conditioss} at each place $v$ of $\mathbb{Q}$.
\end{definition}

To describe and compare the local conditions defining the $\varphi$–Selmer groups of $E$ and its quadratic twists, 
we introduce $\mathbb{F}_{2}$–valued homomorphisms on $\mathbb{Q}^{\times}/\mathbb{Q}^{\times 2}$ 
that record how adjoining a local condition affects the truncated Selmer group (see Lemma~\ref{lem:local_condition_cases}).

\begin{definition} \label{def:homomorphisms}
We define homomorphisms 
$\mathbb{Q}^{\times}/\mathbb{Q}^{\times 2} \to \mathbb{F}_{2}=\{0,1\}$ 
as follows:
\begin{enumerate}
\item The \emph{sign homomorphism} at $v=\infty$:
\[
\varepsilon_{\infty}(\alpha)=
\begin{cases}
0, & \alpha>0,\\
1, & \alpha<0.
\end{cases}
\]

\item For a finite prime $p$, the \emph{valuation parity homomorphism}
\[
\lambda_{p}(\alpha)= v_{p}(\alpha)\bmod 2.
\]

\item For a finite prime $p$, the \emph{quadratic residue homomorphism}: 
writing $\alpha=u_{\alpha}\,p^{v_{p}(\alpha)}$ with $p\nmid u_{\alpha}$, set
\[
\gamma_{p}(\alpha)=
\begin{cases}
0, & \bigl(\tfrac{u_{\alpha}}{p}\bigr)=1,\\
1, & \bigl(\tfrac{u_{\alpha}}{p}\bigr)=-1,
\end{cases}
\]
where $\bigl(\tfrac{\cdot}{p}\bigr)$ is the Legendre symbol.
\end{enumerate}
\end{definition}

We note that  truncated Selmer groups of $E/\mathbb{Q}$ are naturally subgroups of $\mathbb{Q}^{\times}/\mathbb{Q}^{\times 2}$. Accordingly, operations such as intersection with 
$\ker(\lambda_{p})$, $\ker(\gamma_{p})$ 
are to be interpreted using this identification.

\begin{lemma}\label{lem:local_condition_cases}
Let $\mathcal{T}$ be a finite set of places of $\mathbb{Q}$ containing an odd prime $p$.
Then, according to the local condition $\mathcal{L}_{\varphi,p}\subseteq H^{1}(\mathbb{Q}_{p},E[\varphi])$,
one has the following descriptions of the truncated Selmer group:
\begin{enumerate}
\item[(i)] If $\mathcal{L}_{\varphi,p}=H^{1}(\mathbb{Q}_{p},E[\varphi])$, then
\[
\Sel^{\varphi}_{\mathcal{T}\setminus\{p\}}(E/\mathbb{Q})
=\Sel^{\varphi}_{\mathcal{T}}(E/\mathbb{Q}).
\]

\item[(ii)] If $\mathcal{L}_{\varphi,p}=\{0\}$, then
\[
\Sel^{\varphi}_{\mathcal{T}\setminus\{p\}}(E/\mathbb{Q})
=\Sel^{\varphi}_{\mathcal{T}}(E/\mathbb{Q})
\cap\ker(\lambda_{p})
\cap \ker(\gamma_{p}).
\]

\item[(iii)] If $\mathcal{L}_{\varphi,p}=H^{1}_{\mathrm{ur}}(\mathbb{Q}_{p},E[\varphi])$, then
\[
\Sel^{\varphi}_{\mathcal{T}\setminus\{p\}}(E/\mathbb{Q})
=\Sel^{\varphi}_{\mathcal{T}}(E/\mathbb{Q})
\cap\ker(\lambda_{p}).
\]

\item[(iv)] If, under the identification
$H^{1}(\mathbb{Q}_{p},E[\varphi])\cong\mathbb{Q}_{p}^{\times}/\mathbb{Q}_{p}^{\times 2}$, $\mathcal{L}_{\varphi,p}$ is the subgroup generated by the class of $p \tmod \mathbb{Q}_{p}^{\times 2}$, then
\[
\Sel^{\varphi}_{\mathcal{T}\setminus\{p\}}(E/\mathbb{Q})
=\Sel^{\varphi}_{\mathcal{T}}(E/\mathbb{Q})
\cap\ker(\gamma_{p}).
\]

\item[(v)] If, under the same identification, $\mathcal{L}_{\varphi,p}$ is the subgroup generated by the class of $p u \tmod\mathbb{Q}_{p}^{\times 2}$, for some non-square unit $u\in\mathbb{Z}_{p}^{\times}$, then
\[
\Sel^{\varphi}_{\mathcal{T}\setminus\{p\}}(E/\mathbb{Q})
=\Sel^{\varphi}_{\mathcal{T}}(E/\mathbb{Q})
\cap\ker(\gamma_{p}+\lambda_{p}).
\]
\end{enumerate}
\end{lemma}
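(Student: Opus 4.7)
The plan is to recast the lemma as a direct calculation of preimages under the restriction map at the single prime $p$, with the five cases corresponding to the five possible subgroups of the two-dimensional $\mathbb{F}_{2}$-space $H^{1}(\mathbb{Q}_{p}, E[\varphi])$. First, unwinding Definition~\ref{def:truncated_selmer} shows that the Selmer structures attached to $\mathcal{T}$ and $\mathcal{T}\setminus\{p\}$ differ only at $p$: the former imposes no restriction there (the local condition is all of $H^{1}(\mathbb{Q}_{p}, E[\varphi])$), while the latter imposes the condition $\mathcal{L}_{\varphi,p}$. Consequently
\[
\Sel^{\varphi}_{\mathcal{T}\setminus\{p\}}(E/\mathbb{Q})
= \Sel^{\varphi}_{\mathcal{T}}(E/\mathbb{Q}) \cap \mathrm{res}_{p}^{-1}\!\bigl(\mathcal{L}_{\varphi,p}\bigr),
\]
where $\mathrm{res}_{p}\colon H^{1}(\mathbb{Q}, E[\varphi]) \to H^{1}(\mathbb{Q}_{p}, E[\varphi])$ is the usual restriction, and the entire statement reduces to identifying $\mathrm{res}_{p}^{-1}(\mathcal{L}_{\varphi,p})$ as a subgroup of $\mathbb{Q}^{\times}/\mathbb{Q}^{\times 2}$ in each of the five cases.

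The next step is to make $\mathrm{res}_{p}$ explicit. Under the Kummer identifications of \eqref{eq_1}, applied both globally and locally, $\mathrm{res}_{p}$ is the natural homomorphism $\mathbb{Q}^{\times}/\mathbb{Q}^{\times 2}\to\mathbb{Q}_{p}^{\times}/\mathbb{Q}_{p}^{\times 2}$ induced by inclusion. For odd $p$, Hensel's lemma yields the decomposition $\mathbb{Q}_{p}^{\times}/\mathbb{Q}_{p}^{\times 2}\cong\langle p\rangle\oplus\mathbb{Z}_{p}^{\times}/(\mathbb{Z}_{p}^{\times})^{2}$, and writing $\alpha=u_{\alpha}p^{v_{p}(\alpha)}$ with $u_\alpha$ a $p$-adic unit, the class of $\alpha$ is encoded by the pair $(\lambda_{p}(\alpha),\gamma_{p}(\alpha))\in\mathbb{F}_{2}^{2}$; in other words, $\mathrm{res}_{p}$ is identified with the map $\alpha \mapsto (\lambda_{p}(\alpha),\gamma_{p}(\alpha))$.

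With this in hand, each case becomes a two-line translation. Taking $\mathcal{L}_{\varphi,p}$ to be the full group gives no additional condition, yielding (i); the trivial subgroup forces both $\lambda_{p}$ and $\gamma_{p}$ to vanish, yielding (ii); the unramified subgroup corresponds to $\mathbb{Z}_{p}^{\times}/(\mathbb{Z}_{p}^{\times})^{2}$, i.e.\ to classes of even valuation, giving (iii); the subgroup $\langle p\rangle$ consists of classes with square unit part, giving (iv); and the subgroup $\langle pu\rangle$ with $u$ a non-square unit consists of those classes where the valuation parity coincides with the unit's residue parity, which is exactly $\ker(\lambda_{p}+\gamma_{p})$, giving (v). There is no serious obstacle: the entire content is the explicit identification of $\mathrm{res}_{p}$ with the pair of homomorphisms from Definition~\ref{def:homomorphisms}, after which the five cases are immediate.
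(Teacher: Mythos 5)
Your proposal is correct and follows essentially the same route as the paper's proof: both reduce the statement to the observation that the two truncated Selmer structures differ only at $p$, identify the localization map with the pair $(\lambda_{p},\gamma_{p})$ via the decomposition of $\mathbb{Q}_{p}^{\times}/\mathbb{Q}_{p}^{\times 2}$ for odd $p$, and then read off each of the five cases. Your packaging of $\mathrm{res}_{p}$ as the single map $\alpha\mapsto(\lambda_{p}(\alpha),\gamma_{p}(\alpha))$ is a slightly cleaner way of organising the same case check.
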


\begin{proof}
By definition of truncated Selmer groups,
\[
\Sel^{\varphi}_{\mathcal{T}\setminus\{p\}}(E/\mathbb{Q})
= \Sel^{\varphi}_{\mathcal{T}}(E/\mathbb{Q}) \cap 
\ker\!\left(H^{1}(\mathbb{Q},E[\varphi])\to
H^{1}(\mathbb{Q}_{p},E[\varphi])/\mathcal{L}_{\varphi,p}\right).
\]
Thus passing from $\Sel^{\varphi}_{\mathcal{T}}$ to $\Sel^{\varphi}_{\mathcal{T}\setminus\{p\}}$ amounts to
imposing the additional condition that the localization at $p$ maps to $\mathcal{L}_{\varphi,p}$. Identify $H^{1}(\mathbb{Q},E[\varphi])\cong\mathbb{Q}^{\times}/\mathbb{Q}^{\times 2}$ and
$H^{1}(\mathbb{Q}_{p},E[\varphi])\cong\mathbb{Q}_{p}^{\times}/\mathbb{Q}_{p}^{\times 2}$, and write
for $\alpha\in\mathbb{Q}^{\times}/\mathbb{Q}^{\times 2}$ its $p$-adic factorization
$\alpha=p^{v_{p}(\alpha)}u_{\alpha}$ with $p \nmid u_{\alpha}.$
(i) If $\mathcal{L}_{\varphi,p}=H^{1}(\mathbb{Q}_{p},E[\varphi])$, then there is no new condition at $p$ and
$\Sel^{\varphi}_{\mathcal{T}\setminus\{p\}}=\Sel^{\varphi}_{\mathcal{T}}$. (ii) If $\mathcal{L}_{\varphi,p}=\{0\}$, the extra condition is that the $p$-local class is trivial.
Equivalently $p^{\lambda_{p}(\alpha)}u_{\alpha}\in\mathbb{Q}_{p}^{\times 2}$, i.e.
$\lambda_{p}(\alpha)=0$ and $\big(\frac{u_{\alpha}}{p}\big)=+1$. (iii) If $\mathcal{L}_{\varphi,p}=H^{1}_{\mathrm{ur}}(\mathbb{Q}_{p},E[\varphi])$,
then the $p$-local image is unramified, which happens only when $\lambda_{p}(\alpha)=0$. (iv) If $\mathcal{L}_{\varphi,p}=\langle p\rangle\subset\mathbb{Q}_{p}^{\times}/\mathbb{Q}_{p}^{\times 2}$,
the $p$-local class lies in $\langle p\rangle$ only when $u_{\alpha}\in\mathbb{Q}_{p}^{\times 2}$, i.e.
$\big(\frac{u_{\alpha}}{p}\big)=+1$. (v) If $\mathcal{L}_{\varphi,p}=\langle p\cdot u\rangle$ with $u\in\mathbb{Z}_{p}^{\times}$ a fixed nonsquare, then the $p$-local class lies in $\langle p\cdot u\rangle$ if and only if $\lambda_p(\alpha)=1$ when $(\tfrac{u_{\alpha}}{p})=-1$ or when $\lambda_p(\alpha)=0$ when $(\tfrac{u_{\alpha}}{p})=1$. Equivalently, $\gamma_{p}(\alpha)+\lambda_{p}(\alpha)=0$ in $\mathbb{F}_{2}$.
\end{proof}

\begin{lemma}\label{lem:local_conditions_infinity}
Let $\mathcal{T}$ be a finite set of places of $\mathbb{Q}$ containing the Archimedean place~$\infty$.
Then, according to the local condition 
$\mathcal{L}_{\varphi,\infty}\subseteq H^{1}(\mathbb{R},E[\varphi])$, 
the truncated Selmer group satisfies:
\begin{enumerate}
\item[(i)] If $\mathcal{L}_{\varphi,\infty}=H^{1}(\mathbb{R},E[\varphi])$, then
\[
\Sel^{\varphi}_{\mathcal{T}\setminus\{\infty\}}(E/\mathbb{Q})
=\Sel^{\varphi}_{\mathcal{T}}(E/\mathbb{Q}).
\]

\item[(ii)] If $\mathcal{L}_{\varphi,\infty}=\{0\}$, then
\[
\Sel^{\varphi}_{\mathcal{T}\setminus\{\infty\}}(E/\mathbb{Q})
=\Sel^{\varphi}_{\mathcal{T}}(E/\mathbb{Q})^{+},
\]
where the superscript $``{+}"$ is as in Definition~\ref{def:plus_selmer}.
\end{enumerate}
\end{lemma}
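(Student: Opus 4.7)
The plan is to proceed in exact parallel with the proof of Lemma~\ref{lem:local_condition_cases}, unwinding the definition of the truncated Selmer group at the archimedean place. By construction,
\[
\Sel^{\varphi}_{\mathcal{T}\setminus\{\infty\}}(E/\mathbb{Q})
=\Sel^{\varphi}_{\mathcal{T}}(E/\mathbb{Q})\cap
\ker\!\left(H^{1}(\mathbb{Q},E[\varphi])\to
H^{1}(\mathbb{R},E[\varphi])/\mathcal{L}_{\varphi,\infty}\right),
\]
so the task reduces to describing this kernel in each of the two cases according to the size of $\mathcal{L}_{\varphi,\infty}$. Under the identifications $H^{1}(\mathbb{Q},E[\varphi])\cong\mathbb{Q}^{\times}/\mathbb{Q}^{\times 2}$ and $H^{1}(\mathbb{R},E[\varphi])\cong\mathbb{R}^{\times}/\mathbb{R}^{\times 2}\cong\mathbb{F}_{2}$ coming from $E[\varphi]\cong\mu_{2}$, the restriction map is exactly the sign homomorphism $\alpha\mapsto\varepsilon_{\infty}(\alpha)$ of Definition~\ref{def:homomorphisms}.

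For part~(i), when $\mathcal{L}_{\varphi,\infty}=H^{1}(\mathbb{R},E[\varphi])$ the quotient is trivial, so every class in $H^{1}(\mathbb{Q},E[\varphi])$ lies in the kernel and the two truncated Selmer groups coincide. For part~(ii), when $\mathcal{L}_{\varphi,\infty}=\{0\}$ the kernel is precisely the preimage of $0\in\mathbb{R}^{\times}/\mathbb{R}^{\times 2}$, i.e.\ the subgroup of square classes admitting a positive representative. By Definition~\ref{def:plus_selmer} this is exactly $\Sel^{\varphi}_{\mathcal{T}}(E/\mathbb{Q})^{+}$, which completes the identification.

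There is no real obstacle here: once the identification $H^{1}(\mathbb{R},\mu_{2})\cong\mathbb{R}^{\times}/\mathbb{R}^{\times 2}$ is in place, both parts are immediate from the definitions, and the argument is strictly easier than its counterpart at a finite prime (no unit/valuation decomposition is needed, since $\mathbb{R}^{\times}/\mathbb{R}^{\times 2}$ is one-dimensional). The only point worth verifying is that the restriction $H^{1}(\mathbb{Q},\mu_{2})\to H^{1}(\mathbb{R},\mu_{2})$ coincides with $\varepsilon_{\infty}$, which follows from the functoriality of Kummer theory under $\mathbb{Q}\hookrightarrow\mathbb{R}$.
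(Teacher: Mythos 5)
Your proposal is correct and follows essentially the same route as the paper: unwind the definition of the truncated Selmer group at $\infty$, note that case (i) imposes no condition, and observe that in case (ii) the kernel of the restriction to $H^{1}(\mathbb{R},E[\varphi])$ picks out exactly the positive square classes, which is $\Sel^{\varphi}_{\mathcal{T}}(E/\mathbb{Q})^{+}$ by Definition~\ref{def:plus_selmer}. The extra remark identifying the restriction map with $\varepsilon_{\infty}$ is a harmless elaboration of what the paper leaves implicit.
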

\begin{proof}
By the definition of truncated Selmer groups,
\[
\Sel^{\varphi}_{\mathcal{T}\setminus\{\infty\}}(E/\mathbb{Q})
=\Sel^{\varphi}_{\mathcal{T}}(E/\mathbb{Q}) \cap 
\ker\!\left(H^{1}(\mathbb{Q},E[\varphi])
\to
H^{1}(\mathbb{R},E[\varphi])/\mathcal{L}_{\varphi,\infty}\right).
\]
(i) If $\mathcal{L}_{\varphi,\infty}=H^{1}(\mathbb{R},E[\varphi])$, no additional local constraint is imposed, and hence 
$\Sel^{\varphi}_{\mathcal{T}\setminus\{\infty\}}(E/\mathbb{Q})
=\Sel^{\varphi}_{\mathcal{T}}(E/\mathbb{Q})$. (ii) If $\mathcal{L}_{\varphi,\infty}=\{0\}$, the result follows by definition.
\end{proof}

\subsection{Linear–algebraic description of Selmer conditions}
\label{subsec_linear_algebra}
In this subsection, we study the dimensions of subspaces of $\mathbb{Q}^{\times}/\mathbb{Q}^{\times 2}$ 
intersected with the kernels of the homomorphisms introduced in Definition~\ref{def:homomorphisms}.  
In the sequel, this will allow us to reduce the study of how Selmer ranks vary under quadratic twisting to a problem in linear algebra over~$\mathbb{F}_{2}$.

\begin{notation}\label{not:GammaLambda}
Let $V\subset\mathbb{Q}^{\times}/\mathbb{Q}^{\times 2}$ be a finite–dimensional $\mathbb{F}_{2}$–subspace.
We call a finite set $\Sigma$ a \emph{supporting set} for~$V$ if
\[
 \bigl\{\,p \text{ non-archimedean prime} : 
\lambda_{p}(\alpha)\ne 0 \text{ for some }\alpha\in V\,\bigr\} \subseteq \Sigma.
\]
With such a choice of~$\Sigma$, we make the following constructions:

\begin{enumerate}
\item[(i)] We define an injective map
\[
\iota:V\to\mathbb{F}_{2}^{\Sigma\cup\{\infty\}},\qquad
\alpha\mapsto\bigl(\varepsilon_{\infty}(\alpha),(\lambda_{p}(\alpha))_{p\in\Sigma}\bigr).
\]

\item[(ii)] For a finite prime $q$ (not necessarily in $\Sigma$), we introduce the row vectors
\[
\mathbf{e}^{(q)}=(e^{(q)}_{\infty},(e^{(q)}_{p})_{p\in\Sigma}),
\qquad
\mathbf{c}^{(q)}=(c^{(q)}_{\infty},(c^{(q)}_{p})_{p\in\Sigma})
\in\mathbb{F}_{2}^{\Sigma\cup\{\infty\}},
\]
whose entries are given by
\begin{align*}
e^{(q)}_{\infty} &= 0, &
c^{(q)}_{\infty} &=
\begin{cases}
0, & (\tfrac{-1}{q})=+1,\\[2pt]
1, & (\tfrac{-1}{q})=-1,
\end{cases}\\[6pt]
e^{(q)}_{p} &=
\begin{cases}
1, & p=q,\\[2pt]
0, & p\ne q,
\end{cases}
&
c^{(q)}_{p} &=
\begin{cases}
0, & p=q\text{ or }(\tfrac{p}{q})=+1,\\[2pt]
1, & (\tfrac{p}{q})=-1.
\end{cases}
\end{align*}
\end{enumerate}

\end{notation}

\begin{definition}\label{def:matrices}
Let $\iota:V\to\mathbb{F}_{2}^{\Sigma\cup\{\infty\}}$ be as in Notation~\ref{not:GammaLambda}. Then, we define the following.
\begin{enumerate}
\item[(i)] If $\mathcal{T}$ is a finite set of non-archimedean primes, we define:
\begin{itemize}[leftmargin=*]
\item the \emph{$\gamma$–matrix} associated to $\mathcal{T}$ and $V$ as
\[
\Gamma_{\mathcal{T}}
:= 
\begin{pmatrix}
(\mathbf{c}^{(q)})_{q\in\mathcal{T}}
\end{pmatrix}
\in
\mathrm{Mat}_{\,|\mathcal{T}|\times(|\Sigma|+1)}(\mathbb{F}_{2}),
\]
and write $\Gamma_{\mathcal{T}}\!\mid_{V}$ to denote the restriction to $\iota(V)$;
\item the \emph{$\lambda$–matrix} associated to $\mathcal{T}$ and $V$ as
\[
\Lambda_{\mathcal{T}}
:=
\begin{pmatrix}
(\mathbf{e}^{(q)})_{q\in\mathcal{T}}
\end{pmatrix}
\in
\mathrm{Mat}_{\,|\mathcal{T}|\times(|\Sigma|+1)}(\mathbb{F}_{2}),
\]
and write $\Lambda_{\mathcal{T}}\!\mid_{V}$ to denote the restriction to $\iota(V)$.
\end{itemize}

\item[(ii)] If $\mathcal{T}=\{\infty\}$ consists of the real place, we define the \emph{sign matrix}
\[
S_{\infty}:=(1,0,\ldots,0)\in
\mathrm{Mat}_{1\times(|\Sigma|+1)}(\mathbb{F}_{2}),
\]
and write $S_{\infty}|V$ to denote its restriction to $\iota(V)$.
\end{enumerate}
\end{definition}

\begin{lemma}\label{lem:gamma-lambda-matrix}
Let $V\subset\mathbb{Q}^{\times}/\mathbb{Q}^{\times 2}$ be a finite–dimensional $\mathbb{F}_{2}$–subspace supported on~$\Sigma$, and let $\mathcal{T}$ be a finite set of non-archimedean primes.  
Then, irrespective of the  choice of supporting set $\Sigma$ for $V$, one has
\[
\begin{aligned}
V\cap \bigcap_{q\in\mathcal{T}}\ker(\gamma_{q})
&=\ker\!\bigl(\Gamma_{\mathcal{T}}\!\mid_{V}\bigr),\\[6pt]
V\cap \bigcap_{q\in\mathcal{T}}\ker(\lambda_{q})
&=\ker\!\bigl(\Lambda_{\mathcal{T}}\!\mid_{V}\bigr),\\[6pt]
V\cap \ker(\varepsilon_{\infty})
&=\ker\!\bigl(S_{\infty}\!\mid_{V}\bigr).
\end{aligned}
\]
\end{lemma}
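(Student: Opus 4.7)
The plan is to reduce each of the three identities to a coordinatewise statement about $\iota(\alpha)$ and then recognize the right-hand side as the matrix–vector product of a linear form with $\iota(\alpha)$.

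I would begin by writing every $\alpha\in V$ in its squarefree representative
\[
\alpha \equiv (-1)^{\varepsilon_{\infty}(\alpha)}\prod_{p\in\Sigma}p^{\lambda_{p}(\alpha)}\pmod{\mathbb{Q}^{\times 2}},
\]
which uses the defining property of a supporting set, namely $\lambda_{p}(\alpha)=0$ for every $p\notin\Sigma$. In particular $\iota$ is injective, so I may identify $V$ with $\iota(V)\subseteq\mathbb{F}_{2}^{\Sigma\cup\{\infty\}}$ and work entirely inside that ambient space. Under this identification, the sign formula is immediate: $S_{\infty}\cdot\iota(\alpha)=\varepsilon_{\infty}(\alpha)$, which gives the third identity $V\cap\ker(\varepsilon_{\infty})=\ker(S_{\infty}\!\mid_{V})$ directly from the definition of $S_{\infty}$.

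For the valuation kernels, I would fix $q\in\mathcal{T}$ and distinguish two cases. If $q\in\Sigma$, then $\mathbf{e}^{(q)}\cdot\iota(\alpha)=\lambda_{q}(\alpha)$ by construction. If $q\notin\Sigma$, then $\mathbf{e}^{(q)}$ is the zero row of $\mathbb{F}_{2}^{\Sigma\cup\{\infty\}}$ and $\lambda_{q}(\alpha)=0$ automatically by the supporting-set property, so both sides vanish. Intersecting over $q\in\mathcal{T}$ yields the second identity. For the Legendre-residue case, I would apply multiplicativity of the Legendre symbol to the unit part $u_{\alpha}=\alpha\cdot q^{-v_{q}(\alpha)}$, which in the squarefree representation equals $(-1)^{\varepsilon_{\infty}(\alpha)}\prod_{p\in\Sigma,\,p\ne q}p^{\lambda_{p}(\alpha)}$, and then convert each factor to its $\mathbb{F}_{2}$ contribution. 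This produces
\[
\gamma_{q}(\alpha) \;=\; c^{(q)}_{\infty}\varepsilon_{\infty}(\alpha)+\sum_{p\in\Sigma}c^{(q)}_{p}\lambda_{p}(\alpha) \;=\; \mathbf{c}^{(q)}\cdot\iota(\alpha),
\]
where the putative $p=q$ term vanishes because $c^{(q)}_{q}=0$ (and when $q\notin\Sigma$ no such term appears at all). Intersecting over $q\in\mathcal{T}$ identifies $V\cap\bigcap_{q\in\mathcal{T}}\ker(\gamma_{q})$ with $\ker(\Gamma_{\mathcal{T}}\!\mid_{V})$.

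Finally, independence of the choice of $\Sigma$ follows by observing that enlarging $\Sigma$ by a prime $p'$ outside the support of $V$ appends a zero column to each of $\Gamma_{\mathcal{T}}\!\mid_{V}$ and $\Lambda_{\mathcal{T}}\!\mid_{V}$ (since $\lambda_{p'}$ vanishes on $V$) and simultaneously a zero coordinate to $\iota(V)$, leaving the kernels unchanged. The main, and relatively mild, obstacle is the $\gamma_{q}$ computation: one must be careful with the sign contribution $(\tfrac{-1}{q})$ and with the case distinction $q\in\Sigma$ versus $q\notin\Sigma$, since the matrix $\Gamma_{\mathcal{T}}$ is indexed by $\mathcal{T}$ independently of $\Sigma$. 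Once the formula $\gamma_{q}(\alpha)=\mathbf{c}^{(q)}\cdot\iota(\alpha)$ is established, the lemma is a direct identification of linear forms on $\iota(V)$.
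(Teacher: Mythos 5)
Your proof is correct and follows essentially the same route as the paper's: identify $V$ with $\iota(V)$ and check that $\gamma_{q}$, $\lambda_{q}$, $\varepsilon_{\infty}$ agree with the linear forms $\mathbf{c}^{(q)}$, $\mathbf{e}^{(q)}$, $S_{\infty}$ on $\iota(V)$. You are in fact more explicit than the paper (which only sketches the $\gamma_{q}$ case), spelling out the multiplicativity computation, the $q\in\Sigma$ versus $q\notin\Sigma$ dichotomy, and the independence from the choice of $\Sigma$ — all of which are correct.
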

                   
\begin{proof}
For $\alpha\in V$ and $q\in\mathcal{T}$, write $\alpha=q^{v_q(\alpha)}u_{\alpha}$ with $q\nmid u_{\alpha}$.  
Then $\gamma_q(\alpha)=0$ if and only if $\bigl(\tfrac{u_{\alpha}}{q}\bigr)=1$, that is, 
$\langle\mathbf{c}^{(q)},\iota(\alpha)\rangle=0$; hence
\[
\alpha\in V\cap\!\bigcap_{q\in\mathcal{T}}\ker(\gamma_q)
\iff
\Gamma_{\mathcal{T}}\,\iota(\alpha)=0
\iff
\iota(\alpha)\in\textup{ker}(\Gamma_{\mathcal{T}}). \qedhere
\]  
\end{proof}

\section{Variation in Selmer groups after quadratic twists}
In this section we describe explicitly the $\varphi$–Selmer groups of $E$ and of its quadratic twists, using the framework of truncated Selmer groups introduced in \S\ref{subsec_truncated} together with the homomorphisms of Definition~\ref{def:homomorphisms}.  Theorem~\ref{thm:master} below summarises all possible cases, according to the signs of $a$, $b$, and $\Delta_{E}$.

\subsection{Structure variation under twisting}
We begin by fixing some notation.

\begin{notation} \label{not:primesofd}
For $d \in \mathcal{D}_{E/\mathbb{Q}}$, set
\[
\mathcal{S}_{d} = \{\, v : v \text{ is a place of } \mathbb{Q},\ v \mid d\infty \,\},
\qquad
\mathcal{S}_{d}^{0} = \{\, v : v \text{ is a finite prime of } \mathbb{Q},\ v \mid d \,\}.
\]
\end{notation}

In what follows, the subsets $\mathcal{T}_{d}^{+}$ and $\mathcal{T}_{d}^{-}$ of $\mathcal{S}_{d}^{0}$ 
are those introduced in Notation~\ref{not:T_d}.
In addition, truncated Selmer groups of $E/\mathbb{Q}$ are naturally subgroups of $\mathbb{Q}^{\times}/\mathbb{Q}^{\times 2}$, 
and hence operations such as intersections with 
$\ker(\lambda_{p})$ or $\ker(\gamma_{p})$ 
are interpreted in this setting.

\begin{theorem}\label{thm:master}
Let $E/\mathbb{Q}$ be given by the affine model
$
E:\ y^{2}=x\bigl(x^{2}+a x+b\bigr),
$
and let $d\in\mathcal{D}_{E/\mathbb{Q}}$.  
Then, as subgroups of $H^{1}(\mathbb{Q},E[\varphi])$, one has:
\begin{enumerate}
\item[(i)] If $b<0$ (and hence $\Delta_{E}>0$), then
\[
\Sel^{\varphi}(E/\mathbb{Q})
=\Sel^{\varphi}(E^{-d}/\mathbb{Q}).
\]
\item[(ii)] If $\Delta_E<0$ and $b>0$, then
\begin{align*}
\Sel^{\varphi}(E/\mathbb{Q}) &\;=\; 
\Sel^{\varphi}_{\mathcal{S}_{d}}(E/\mathbb{Q}) \cap \bigcap_{q|d} \ker(\lambda_{q}), \\
\Sel^{\varphi}(E^{-d}/\mathbb{Q}) &\;=\; 
\Sel^{\varphi}_{\mathcal{S}_{d}}(E/\mathbb{Q}).
\end{align*}

\item[(iii)] If $\Delta_{E}>0$, $b>0$, and $a<0$, then
\begin{align*}
\Sel^{\varphi}(E/\mathbb{Q})
&=\Sel^{\varphi}_{\mathcal{S}_{d}}(E/\mathbb{Q})
   \cap \bigcap_{q\mid d}\ker(\lambda_{q}),\\[4pt]
\Sel^{\varphi}(E^{-d}/\mathbb{Q})
&=\Sel^{\varphi}_{\mathcal{S}_{d}}(E/\mathbb{Q})^{+}
   \cap
   \Bigl(\bigcap_{q\in\mathcal{T}_{d}^{+}}\ker(\gamma_{q})\Bigr)
   \cap
   \Bigl(\bigcap_{q\in\mathcal{T}_{d}^{-}}\ker(\gamma_{q}+\lambda_{q})\Bigr).
\end{align*}

\item[(iv)] If $\Delta_{E}>0$, $b>0, a>0$, then
\begin{align*}
\Sel^{\varphi}(E/\mathbb{Q})
&=\Sel^{\varphi}_{\mathcal{S}_{d}}(E/\mathbb{Q})^{+} \cap \bigcap_{q|d} \ker(\lambda_{q}), \\
\Sel^{\varphi}(E^{-d}/\mathbb{Q})&=\Sel^{\varphi}_{\mathcal{S}_{d}}(E/\mathbb{Q})
   \cap
   \Bigl(\bigcap_{q\in\mathcal{T}_{d}^{+}}\ker(\gamma_{q})\Bigr)
   \cap
   \Bigl(\bigcap_{q\in\mathcal{T}_{d}^{-}}\ker(\gamma_{q}+\lambda_{q})\Bigr).
\end{align*}
\end{enumerate}
\end{theorem}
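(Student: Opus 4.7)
The plan is to use Lemma~\ref{eq_local_conditions} to identify
$\Sel^{\varphi}_{\mathcal{S}_d}(E/\mathbb{Q})=\Sel^{\varphi}_{\mathcal{S}_d}(E^{-d}/\mathbb{Q})$
as a common subgroup of $H^{1}(\mathbb{Q},E[\varphi])$, and then to recover both $\Sel^{\varphi}(E/\mathbb{Q})$ and $\Sel^{\varphi}(E^{-d}/\mathbb{Q})$ from it by imposing, one place at a time, the local conditions at the $v\in\mathcal{S}_d$. For each $q\mid d$, the curve $E$ has good reduction at $q$ (since $q\in\mathcal{Q}_{E/\mathbb{Q}}$), so Lemma~\ref{lem:unramified_condition} gives $\mathcal{L}_{\varphi,q}(E)=H^{1}_{\mathrm{ur}}(\mathbb{Q}_q,E[\varphi])$, and Lemma~\ref{lem:local_condition_cases}(iii) converts this into an intersection with $\ker(\lambda_q)$. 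For $E^{-d}$, Lemma~\ref{loc_conditions_at_q} identifies $\mathcal{L}_{\varphi,q}(E^{-d})$ as either trivial, maximal, or a cyclic subgroup of the form $\langle\pm q\rangle$, and the corresponding part of Lemma~\ref{lem:local_condition_cases} translates each case into an intersection with a kernel of $\lambda_q$, $\gamma_q$, or $\gamma_q+\lambda_q$. The case $\langle-q\rangle$ matches Lemma~\ref{lem:local_condition_cases}(v) because $q\equiv 7\bmod 8$ forces $-1$ to be a non-square unit modulo $q$. At the real place, Lemma~\ref{lem:infinity} and Corollary~\ref{cor:infinity_under_twist} determine whether $\mathcal{L}_{\varphi,\infty}$ is $\{1\}$ or $H^{1}(\mathbb{R},E[\varphi])$, and Lemma~\ref{lem:local_conditions_infinity} then translates the trivial case into the ``$+$''~truncation.

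With this dictionary in place, cases~(ii), (iii), and~(iv) follow directly by reading off the local contributions according to the prescribed signs of $a$, $b$, and~$\Delta_E$ and the partition $\{q:q\mid d\}=\mathcal{T}_d^{+}\cup\mathcal{T}_d^{-}$. The only bookkeeping is tracking which of $\Sel^{\varphi}(E/\mathbb{Q})$ and $\Sel^{\varphi}(E^{-d}/\mathbb{Q})$ picks up the ``$+$''~truncation at $\infty$ in each regime, which is already tabulated in \S\ref{subsec_intro_4}.

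The main obstacle is case~(i). Running the same recipe only yields
\[
\Sel^{\varphi}(E/\mathbb{Q})=\Sel^{\varphi}_{\mathcal{S}_d}(E/\mathbb{Q})^{+}\cap\bigcap_{q\mid d}\ker(\lambda_q),
\]
\[
\Sel^{\varphi}(E^{-d}/\mathbb{Q})=\Sel^{\varphi}_{\mathcal{S}_d}(E/\mathbb{Q})^{+}\cap\bigcap_{q\mid d}\bigl(\ker(\lambda_q)\cap\ker(\gamma_q)\bigr),
\]
so the inclusion $\Sel^{\varphi}(E^{-d}/\mathbb{Q})\subseteq\Sel^{\varphi}(E/\mathbb{Q})$ is immediate, while the stated equality is genuinely stronger. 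For the reverse inclusion, I plan to show that for every $\alpha\in\Sel^{\varphi}(E/\mathbb{Q})$ and every $q\mid d$ one has $\gamma_q(\alpha)=0$ automatically, by applying the global Hilbert-symbol product formula $\prod_v(\alpha,-q)_v=1$. Indeed, $(\alpha,-q)_\infty=1$ because $\alpha>0$; at each $p\mid 2N_E$ the Heegner hypothesis ensures $-q\in\mathbb{Q}_p^{\times 2}$, so $(\alpha,-q)_p=1$; at every odd prime $p\nmid 2N_Ed$ both $\alpha$ and $-q$ are units in $\mathbb{Q}_p^{\times}$ modulo squares (the former because the unramified Selmer condition at $p$ gives $\lambda_p(\alpha)=0$), so $(\alpha,-q)_p=1$; and the same unit argument handles every $q'\mid d$ with $q'\neq q$. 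The sole nontrivial factor is $(\alpha,-q)_q=(\alpha,q)_q=(-1)^{\gamma_q(\alpha)}$, which must therefore equal~$1$, yielding $\gamma_q(\alpha)=0$ as required.
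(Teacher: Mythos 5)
Your proposal is correct and follows essentially the same route as the paper: start from the common relaxed group $\Sel^{\varphi}_{\mathcal{S}_d}(E/\mathbb{Q})=\Sel^{\varphi}_{\mathcal{S}_d}(E^{-d}/\mathbb{Q})$ and reinstate the local conditions at $v\mid d\infty$ via Lemmas~\ref{loc_conditions_at_q}, \ref{lem:infinity}, \ref{cor:infinity_under_twist}, \ref{lem:local_condition_cases} and \ref{lem:local_conditions_infinity}. The only divergence is in case~(i), where you derive the automatic vanishing of $\gamma_q$ on $\Sel^{\varphi}_{\mathcal{S}_d}(E/\mathbb{Q})^{+}\cap\bigcap_{q\mid d}\ker(\lambda_q)$ from the Hilbert-symbol product formula, whereas the paper's Corollary~\ref{cor:gamma_trivial} obtains the same statement by applying quadratic reciprocity to an explicit squarefree representative supported on $2N_E$; the two arguments are equivalent in content and both are valid.
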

Before turning to the proof of this theorem, we present some immediate corollaries.  
Since $\Sel^{\varphi}(E/\Q)$ and $\Sel^{\varphi}(E^{-d}/\Q)$ lie in the same ambient group $H^{1}(\Q,E[\varphi])$, their intersection is well defined. The following corollary describes these intersections explicitly.
\begin{corollary}\label{cor:all_intersections}
In the situation of Theorem \ref{thm:master}, and as subgroups of $H^{1}(\mathbb{Q},E[\varphi])$, one has:
\begin{enumerate}
\item[(i)] If $b<0$ (hence $\Delta_{E}>0$), then
\[
\Sel^{\varphi}(E/\mathbb{Q})\ \cap\ \Sel^{\varphi}(E^{-d}/\mathbb{Q})
\;=\;
\Sel^{\varphi}(E/\mathbb{Q})
\;=\;
\Sel^{\varphi}(E^{-d}/\mathbb{Q}).
\]

\item[(ii)] If $\Delta_{E}<0$ and $b>0$, then
\[
\Sel^{\varphi}(E/\mathbb{Q})\ \cap\ \Sel^{\varphi}(E^{-d}/\mathbb{Q})
\;=\;
\Sel^{\varphi}_{\mathcal{S}_{d}}(E/\mathbb{Q}) \cap \bigcap_{q\mid d}\ker(\lambda_{q})
\;=\;
\Sel^{\varphi}(E/\mathbb{Q}).
\]

\item[(iii)] If $\Delta_{E}>0$, $b>0$, and $a<0$, then
\[
\Sel^{\varphi}(E/\mathbb{Q})\ \cap\ \Sel^{\varphi}(E^{-d}/\mathbb{Q})
\;=\;
\Sel^{\varphi}_{\mathcal{S}_{d}}(E/\mathbb{Q})^{+}
\ \cap\ \bigcap_{q\mid d}\ker(\lambda_{q}).
\]

\item[(iv)] If $\Delta_{E}>0$, $b>0$, and $a>0$, then
\[
\Sel^{\varphi}(E/\mathbb{Q})\ \cap\ \Sel^{\varphi}(E^{-d}/\mathbb{Q})
\;=\;
\Sel^{\varphi}_{\mathcal{S}_{d}}(E/\mathbb{Q})^{+}
\ \cap\ \bigcap_{q\mid d}\ker(\lambda_{q}).
\]
\end{enumerate}
\end{corollary}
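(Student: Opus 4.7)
The plan is to derive each case of the corollary by intersecting the explicit descriptions of $\Sel^{\varphi}(E/\Q)$ and $\Sel^{\varphi}(E^{-d}/\Q)$ provided by Theorem~\ref{thm:master}. Cases (i) and (ii) are essentially bookkeeping: in (i) the two Selmer groups coincide, while in (ii) the theorem exhibits $\Sel^{\varphi}(E/\Q)$ as a subgroup of $\Sel^{\varphi}(E^{-d}/\Q)$, so in both situations the intersection collapses to one of the described Selmer groups.

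For cases (iii) and (iv) the argument is more substantial. First I would combine the two descriptions from Theorem~\ref{thm:master} to obtain, in both cases, the intersection
\[
\Sel^{\varphi}_{\mathcal{S}_d}(E/\Q)^{+}\,\cap\,\bigcap_{q\mid d}\ker(\lambda_q)\,\cap\,\bigcap_{q\in\mathcal{T}_d^{+}}\ker(\gamma_q)\,\cap\,\bigcap_{q\in\mathcal{T}_d^{-}}\ker(\gamma_q+\lambda_q).
\]
Since we are already intersecting with $\bigcap_{q\mid d}\ker(\lambda_q)$, every surviving element satisfies $\lambda_q=0$, so $\ker(\gamma_q+\lambda_q)$ reduces to $\ker(\gamma_q)$. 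Using the partition $\mathcal{T}_d^{+}\sqcup\mathcal{T}_d^{-}=\{q:q\mid d\}$ from Notation~\ref{not:T_d}, the expression above simplifies to
\[
\Sel^{\varphi}_{\mathcal{S}_d}(E/\Q)^{+}\,\cap\,\bigcap_{q\mid d}\ker(\lambda_q)\,\cap\,\bigcap_{q\mid d}\ker(\gamma_q).
\]

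The heart of the argument is then to show that this last factor is redundant: every $\alpha$ in $\Sel^{\varphi}_{\mathcal{S}_d}(E/\Q)^{+}\cap\bigcap_{q\mid d}\ker(\lambda_q)$ automatically lies in $\ker(\gamma_q)$ for each $q\mid d$. By Lemma~\ref{lem:unramified_condition}, any element of the truncated Selmer group $\Sel^{\varphi}_{\mathcal{S}_d}(E/\Q)$ has, up to squares, support contained in $\{-1\}\cup\{p:p\mid 2N_E d\}$. Imposing $\lambda_q=0$ for all $q\mid d$ removes the prime factors of $d$, and the ``${+}$''-decoration rules out the factor $-1$; hence, modulo $\Q^{\times 2}$, one may write $\alpha=\prod_{p\in T}p$ with $T\subseteq\{p:p\mid 2N_E\}$. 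The value $\gamma_q(\alpha)$ is then a product of Legendre symbols $\bigl(\tfrac{p}{q}\bigr)$ over $p\in T$, each of which I would show equals $+1$: for odd $p\mid N_E$, the defining splitting condition of $\mathcal{Q}_{E/\Q}$ gives $\bigl(\tfrac{-q}{p}\bigr)=1$, which combined with $q\equiv 3\pmod 4$ and quadratic reciprocity rearranges to $\bigl(\tfrac{p}{q}\bigr)=1$; for $p=2$, the congruence $q\equiv 7\pmod 8$ gives $\bigl(\tfrac{2}{q}\bigr)=1$ directly. Hence $\gamma_q(\alpha)=0$ for every $q\mid d$, as required.

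The main obstacle, as I see it, is carrying out the reciprocity manipulation cleanly and confirming that the ``${+}$''-decoration does indeed correspond to positivity of a square-class representative (which is the content of the remark following Definition~\ref{def:plus_selmer}); once these identifications are in place, the corollary follows from the computations above.
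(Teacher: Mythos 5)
Your proposal is correct and follows essentially the same route as the paper: cases (i)--(ii) are immediate from Theorem~\ref{thm:master}, and for (iii)--(iv) you reduce to showing that each $\gamma_{q}$ vanishes on $\Sel^{\varphi}_{\mathcal{S}_{d}}(E/\mathbb{Q})^{+}\cap\bigcap_{q\mid d}\ker(\lambda_{q})$, which you establish by exactly the support-plus-quadratic-reciprocity argument that the paper isolates as Corollary~\ref{cor:gamma_trivial}. The only cosmetic difference is that you inline that lemma (citing Lemma~\ref{lem:unramified_condition} directly rather than Lemma~\ref{lem:eq_local_conditions}); the substance is identical.
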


\begin{proof}
Using the descriptions in Theorem~\ref{thm:master}, parts \textup{(i)}–\textup{(ii)} are immediate.  
In case \textup{(iii)}, the intersection is given by
\[   \textup{Sel}_{\mathcal{S}_{d}}^{\varphi}(E/\mathbb{Q})^{+} \cap \Bigr{(} \bigcap_{q|d} \textup{ker}(\lambda_q) \Bigr{)}  \cap
   \Bigl(\bigcap_{q\in\mathcal{T}_{d}^{+}}\ker(\gamma_{q})\Bigr)
   \cap
   \Bigl(\bigcap_{q\in\mathcal{T}_{d}^{-}}\ker(\gamma_{q}+\lambda_{q})\Bigr).\]
By Corollary~\ref{cor:gamma_trivial} below, each $\gamma_{q}$ in this description vanishes on 
$\Sel^{\varphi}_{\mathcal{S}_{d}}(E/\mathbb{Q})^{+}\cap\bigcap_{q\mid d}\ker(\lambda_{q})$, giving the required description in case (iii). The result in case (iv) follows similarly. \qedhere
\end{proof}

\begin{corollary}\label{cor:master-Eprime}
Let $ E':\ y^{2}=x(x^{2}-2a x+a^2-4b)$
be the $2$–isogenous curve with dual isogeny $\hat{\varphi}:E'\to E$. Then, for each $d\in\mathcal{D}_{E/\mathbb{Q}}$, one has:
\begin{enumerate}
\item[(i)] If $b<0$ (and hence $\Delta_{E}>0$), then
\begin{align*}
\Sel^{\hat{\varphi}}(E'/\mathbb{Q})
&=\Sel^{\hat{\varphi}}_{\mathcal{S}_{d}}(E'/\mathbb{Q})\cap\bigcap_{q\mid d}\ker(\lambda_{q}), \\
\Sel^{\hat{\varphi}}(E'^{-d}/\mathbb{Q})
&=\Sel^{\hat{\varphi}}_{\mathcal{S}_{d}}(E'/\mathbb{Q}), \\
\Sel^{\hat{\varphi}}(E'/\mathbb{Q})
\cap
\Sel^{\hat{\varphi}}(E'^{-d}/\mathbb{Q})
&=\Sel^{\hat{\varphi}}(E'/\mathbb{Q}).
\end{align*}

\medskip

\item[(ii)] If $\Delta_{E}<0$ and $b>0$, then
\begin{align*}
\Sel^{\hat{\varphi}}(E'/\mathbb{Q})
&=\Sel^{\hat{\varphi}}(E'^{-d}/\mathbb{Q}), \\
\Sel^{\hat{\varphi}}(E'/\mathbb{Q})
\cap
\Sel^{\hat{\varphi}}(E'^{-d}/\mathbb{Q})
&=\Sel^{\hat{\varphi}}(E'/\mathbb{Q}).
\end{align*}

\medskip

\item[(iii)] If $\Delta_{E}>0$, $b>0$, and $a<0$, then
\begin{align*}
\Sel^{\hat{\varphi}}(E'/\mathbb{Q})
&=\Sel^{\hat{\varphi}}_{\mathcal{S}_{d}}(E'/\mathbb{Q})^{+}
   \cap\bigcap_{q\mid d}\ker(\lambda_{q}),\\[4pt]
\Sel^{\hat{\varphi}}(E'^{-d}/\mathbb{Q})
&=\Sel^{\hat{\varphi}}_{\mathcal{S}_{d}}(E'/\mathbb{Q})
   \cap
   \Bigl(\bigcap_{q\in\mathcal{T}_{d}^{+}}\ker(\gamma_{q})\Bigr)
   \cap
   \Bigl(\bigcap_{q\in\mathcal{T}_{d}^{-}}\ker(\gamma_{q}+\lambda_{q})\Bigr),\\
   \Sel^{\hat{\varphi}}(E'/\mathbb{Q})
\cap
\Sel^{\hat{\varphi}}(E'^{-d}/\mathbb{Q})
&=\Sel^{\hat{\varphi}}_{\mathcal{S}_{d}}(E'/\mathbb{Q})^{+}
\cap
\bigcap_{q\mid d}\ker(\lambda_{q}).
\end{align*}

\medskip

\item[(iv)] If $\Delta_{E}>0$, $b>0$, and $a>0$, then
\begin{align*}
\Sel^{\hat{\varphi}}(E'/\mathbb{Q})
&=\Sel^{\hat{\varphi}}_{\mathcal{S}_{d}}(E'/\mathbb{Q})
   \cap\bigcap_{q\mid d}\ker(\lambda_{q}),\\[4pt]
\Sel^{\hat{\varphi}}(E'^{-d}/\mathbb{Q})
&=\Sel^{\hat{\varphi}}_{\mathcal{S}_{d}}(E'/\mathbb{Q})^{+}
   \cap
   \Bigl(\bigcap_{q\in\mathcal{T}_{d}^{+}}\ker(\gamma_{q})\Bigr)
   \cap
   \Bigl(\bigcap_{q\in\mathcal{T}_{d}^{-}}\ker(\gamma_{q}+\lambda_{q})\Bigr),\\
   \Sel^{\hat{\varphi}}(E'/\mathbb{Q})
\cap
\Sel^{\hat{\varphi}}(E'^{-d}/\mathbb{Q})
&=\Sel^{\hat{\varphi}}_{\mathcal{S}_{d}}(E'/\mathbb{Q})
\cap
\bigcap_{q\mid d}\ker(\lambda_{q}).
\end{align*}
\end{enumerate}
\end{corollary}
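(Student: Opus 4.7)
The plan is to prove the corollary by applying Theorem~\ref{thm:master} and Corollary~\ref{cor:all_intersections} directly to the $2$-isogenous curve $E'$, regarded as a new base curve equipped with its associated $2$-isogeny $\hat{\varphi}:E'\to E$. The curve $E'$ admits the integral Weierstrass model $y^{2}=x(x^{2}+a'x+b')$ with $a'=-2a$ and $b'=a^{2}-4b$, and the point $(0,0)\in E'(\Q)[2]$ witnesses its partial $2$-torsion, so $E'$ satisfies all the standing hypotheses of the paper. Moreover, since $E$ and $E'$ are isogenous over $\Q$ they share the same conductor and primes of bad reduction, whence $\mathcal{D}_{E/\Q}=\mathcal{D}_{E'/\Q}$, and the entire framework of \S\ref{subsec_twisting} and \S\ref{sec_truncated} applies to $E'$ verbatim, with $(E')^{-d}=E'^{-d}$ playing the role of $E^{-d}$.

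The first step is to translate the sign data used to enumerate the four cases of Theorem~\ref{thm:master} from $E$ to $E'$. A direct computation yields
\[
\Delta_{E'}=256\,b\,(a^{2}-4b)^{2},\qquad a'=-2a,\qquad b'=a^{2}-4b,
\]
so $\mathrm{sgn}(\Delta_{E'})=\mathrm{sgn}(b)$, $\mathrm{sgn}(b')=\mathrm{sgn}(a^{2}-4b)=\mathrm{sgn}(\Delta_{E})$, and $\mathrm{sgn}(a')=-\mathrm{sgn}(a)$. Tabulating these signs produces a bijection between the four regimes for $E$ and those for $E'$: case~(i) for $E$ (where $b<0$) corresponds to case~(ii) for $E'$ (where $\Delta_{E'}<0$ and $b'>0$); case~(ii) for $E$ corresponds to case~(i) for $E'$; case~(iii) for $E$ (where $\Delta_{E}>0,\,b>0,\,a<0$) corresponds to case~(iv) for $E'$; and case~(iv) for $E$ corresponds to case~(iii) for $E'$.

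With this dictionary in hand, each of parts~(i)--(iv) of the corollary reduces to the correspondingly indexed case of Theorem~\ref{thm:master} applied to $E'$ with $2$-isogeny $\hat{\varphi}$: the two displayed Selmer-group descriptions in each part are immediate translations, and the third line describing $\Sel^{\hat{\varphi}}(E'/\Q)\cap\Sel^{\hat{\varphi}}(E'^{-d}/\Q)$ follows analogously from Corollary~\ref{cor:all_intersections} applied to $E'$.

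The one delicate point requiring verification is the compatibility of the sets $\mathcal{T}_{d}^{\pm}$ appearing in cases~(iii) and~(iv). When Theorem~\ref{thm:master} is applied to $E'$, these are defined via a root in $\mathbb{Z}_{q}^{\times}$ of $x^{2}-2a'x+((a')^{2}-4b')=x^{2}+4ax+16b$ rather than of the original $x^{2}-2ax+(a^{2}-4b)$. However, if $\beta\in\mathbb{Z}_{q}^{\times}$ is a root of the original polynomial, then $-2\beta$ is a root of the new one; and for $q\in\mathcal{Q}_{E/\Q}$ one has $q\equiv 7\pmod{8}$, so $\bigl(\tfrac{-1}{q}\bigr)=-1$ and $\bigl(\tfrac{2}{q}\bigr)=+1$. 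A short Legendre-symbol calculation then tracks how this sign interaction propagates through Notation~\ref{not:T_d} and reconciles the $\mathcal{T}_{d}^{\pm}$-labels in the two formulations. This bookkeeping is the step I expect to require the most care, but it is routine once the case-by-case dictionary above is in place.
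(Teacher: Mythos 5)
Your route is the same as the paper's: apply Theorem~\ref{thm:master} to $E'$ (using $\mathcal{D}_{E/\Q}=\mathcal{D}_{E'/\Q}$) with the sign dictionary $(\operatorname{sgn} b',\operatorname{sgn}\Delta_{E'},\operatorname{sgn} a')=(\operatorname{sgn}\Delta_{E},\operatorname{sgn} b,-\operatorname{sgn} a)$, which permutes the four cases exactly as you list, and then handle the intersections as in Corollary~\ref{cor:all_intersections}. That is precisely what the paper does.

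The one step where you go beyond the paper, however, is flawed. If $\beta$ is a root of $x^{2}-2ax+(a^{2}-4b)$, then $-2\beta$ is \emph{not} a root of $x^{2}-2a'x+\bigl((a')^{2}-4b'\bigr)=x^{2}+4ax+16b$: substitution gives $4\bigl(\beta^{2}-2a\beta+4b\bigr)=4(8b-a^{2})$, which is generically nonzero. Indeed the roots of the two polynomials are $a\pm2\sqrt{b}$ and $-2a\pm2\sqrt{a^{2}-4b}$, and they are not proportional. Writing $\beta'=-(\beta+\overline{\beta})+2\sqrt{\beta\overline{\beta}}=-\bigl(\sqrt{\beta}-\sqrt{\overline{\beta}}\bigr)^{2}$, and using that $\sqrt{\beta}-\sqrt{\overline{\beta}}$ is a unit (its product with $\sqrt{\beta}+\sqrt{\overline{\beta}}$ equals $\beta-\overline{\beta}=\pm4\sqrt{b}\in\mathbb{Z}_{q}^{\times}$) together with $\bigl(\tfrac{-1}{q}\bigr)=-1$ for $q\equiv7\tmod 8$, one finds $(\beta',q)_{q}=-(\beta,q)_{q}$ for every $q\in\mathcal{Q}_{E/\Q}$. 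Hence the sets produced by Notation~\ref{not:T_d} for $E'$ are the \emph{swaps} $\mathcal{T}_{d}^{\mp}$ of those for $E$, not equal to them; this genuinely matters in cases (iii)--(iv), since it exchanges which primes carry the condition $\ker(\gamma_{q})$ versus $\ker(\gamma_{q}+\lambda_{q})$. You must either carry this swap through the statement or make explicit that the $\mathcal{T}_{d}^{\pm}$ appearing in the corollary are those attached to $E'$. The paper's own proof is silent on this point, so you were right to flag it, but your proposed reconciliation does not work as written.
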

\begin{proof} 
Since $E/\mathbb{Q}$ and $E'/\mathbb{Q}$ are isogenous, then they have the same set of bad primes, and therefore $\mathcal{D}_{E/\mathbb{Q}}= \mathcal{D}_{E'/\mathbb{Q}}$. As a result, we may apply Theorem~\ref{thm:master} to $E'$ for every $d\in\mathcal{D}_{E/\mathbb{Q}}$. In this case however, $(a',b',\Delta_{E'})=(-2a,\ \Delta_{E}\cdot (\textup{square}),\ b\cdot(\textup{square}))$,  
so in the sign configuration of Theorem~\ref{thm:master},  
the roles of $b$ and $\Delta_{E}$ are interchanged and the sign of $a$ is reversed. This gives the claimed description for the Selmer groups. For the claim regarding their intersections, we argue as in Corollary \ref{cor:all_intersections}. 
\end{proof}
\subsection{Proof of Theorem \ref{thm:master}}
We first record a few auxiliary results for the truncated Selmer group $\textup{Sel}_{\mathcal{S}_{d}}^{\varphi}(E/\mathbb{Q})$ that will be necessary for the arguments that follow.

\begin{lemma} \label{lem:eq_local_conditions}
Let $d \in \mathcal{D}_{E/\mathbb{Q}}$. Then, viewed as subgroups of $H^{1}(\mathbb{Q},E[\varphi])$, 
\[
\Sel^{\varphi}_{\mathcal{S}_d}(E^{-d}/\mathbb{Q})
\;=\;
\Sel^{\varphi}_{\mathcal{S}_d}(E/\mathbb{Q}).
\]
Moreover, viewed as a subgroup of $\mathbb{Q}^{\times}/\mathbb{Q}^{\times 2}$, every element of this group admits a representative as a square–free integer supported only on primes dividing $2dN_{E}$.
\end{lemma}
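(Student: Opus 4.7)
The plan is to deduce both assertions from Lemmas~\ref{eq_local_conditions} and~\ref{lem:unramified_condition}, the only subtlety being a careful bookkeeping of places.

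For the first equality, I would unpack the definition of the truncated Selmer group: $\Sel^{\varphi}_{\mathcal{S}_d}(E/\mathbb{Q})$ is cut out by the local conditions $\mathcal{L}_{\varphi,v}(E)$ at exactly the places $v\notin\mathcal{S}_d$, namely the finite primes $v\nmid d$ (with no condition imposed at~$\infty$). Since every prime factor of $d$ lies in $\mathcal{Q}_{E/\mathbb{Q}}$, we have $\gcd(d,2N_E)=1$, so this set of places decomposes cleanly into the primes $v\mid 2N_E$ (automatically disjoint from~$d$) together with the odd primes $v\nmid 2dN_E$. These are precisely the two families treated by Lemma~\ref{eq_local_conditions}, which gives $\mathcal{L}_{\varphi,v}(E)=\mathcal{L}_{\varphi,v}(E^{-d})$ inside $H^{1}(\mathbb{Q}_v,E[\varphi])$ at each such~$v$. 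Since both truncated Selmer groups are cut out of the common cohomology group $H^{1}(\mathbb{Q},E[\varphi])$ by the same local conditions place by place, they coincide.

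For the representability claim, I would invoke the identification $H^{1}(\mathbb{Q},E[\varphi])\cong \mathbb{Q}^{\times}/\mathbb{Q}^{\times 2}$ from \eqref{eq_1}, under which every class admits a square-free integer representative (unique up to sign). Given a class in $\Sel^{\varphi}_{\mathcal{S}_d}(E/\mathbb{Q})$ represented by such a square-free $\alpha$, I would show that $p\nmid\alpha$ for every odd prime $p\nmid 2dN_E$. Since $E$ has good reduction at each such $p$, Lemma~\ref{lem:unramified_condition} identifies the restriction map
\[
H^{1}(\mathbb{Q},E[\varphi]) \longrightarrow H^{1}(\mathbb{Q}_p,E[\varphi])/\mathcal{L}_{\varphi,p}(E)
\]
with $\alpha\mapsto p^{v_p(\alpha)}\bmod \mathbb{Q}_p^{\times 2}$. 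The Selmer condition forces this image to be trivial, so $v_p(\alpha)$ is even; but $\alpha$ is square-free, whence $v_p(\alpha)=0$. Ranging over all such $p$ confines the support of $\alpha$ to the primes dividing $2dN_E$.

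The main obstacle is essentially administrative rather than conceptual: one must verify that $\mathcal{D}_{E/\mathbb{Q}}$ automatically enforces $\gcd(d,2N_E)=1$, so that Lemma~\ref{eq_local_conditions} applies uniformly across all places at which truncation does not occur. With that observation in hand, both parts follow directly without any further computation.
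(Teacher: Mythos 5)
Your proposal is correct and follows essentially the same route as the paper: the first equality is obtained by matching the local conditions place by place via Lemma~\ref{eq_local_conditions}, and the support claim by applying Lemma~\ref{lem:unramified_condition} at the odd primes of good reduction outside $2dN_E$ to force even (hence zero) valuation of a square-free representative. The only cosmetic difference is that the paper first enlarges the relaxed set to $\mathcal{R}=\mathcal{S}_d\cup\{v\mid 2N_E\}$ before running the valuation argument, whereas you work directly with the square-free representative; your observation that $\gcd(d,2N_E)=1$ is indeed the needed (and easily verified) bookkeeping point.
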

\begin{proof}
By Lemma~\ref{eq_local_conditions}, the local conditions for $E$ and $E^{-d}$ coincide at every finite place $v \nmid 2dN_{E}$ and at every $v \mid 2N_{E}$. This gives the equality $\Sel^{\varphi}_{\mathcal{S}_d}(E^{-d}/\mathbb{Q})
\;=\;
\Sel^{\varphi}_{\mathcal{S}_d}(E/\mathbb{Q})$. For the second assertion, set $\mathcal{R}:= \mathcal{S}_{d} \cup \{v \mid 2N_E\}$. By definition, $\textup{Sel}_{\mathcal{S}_{d}}^{\varphi}(E/\mathbb{Q}) \subseteq  \textup{Sel}_{\mathcal{R}}^{\varphi}(E/\mathbb{Q})$. To compute $\textup{Sel}_{\mathcal{R}}^{\varphi}(E/\mathbb{Q})$, we only need to consider the local conditions at $v$ not dividing $2dN_{E}{\infty}$.  Since $E$ has good reduction at all odd primes $v \nmid 2dN_E$, then Lemma~\ref{lem:unramified_condition} shows that an element $\alpha$ in $\Sel^{\varphi}_{\mathcal{R}}(E/\mathbb{Q})$ satisfies $v(\alpha) \equiv 0 \tmod 2$ for all $v \not\in\mathcal{R}$.
\end{proof}

\begin{corollary} \label{cor:gamma_trivial} 
For any prime $\tilde{q}\in\mathcal{Q}_{E/\mathbb{Q}}$, the character $\gamma_{\tilde{q}}$ of Definition \ref{def:homomorphisms} is trivial on
$\Sel^{\varphi}_{\mathcal{S}_{d}}(E/\mathbb{Q})^{+} \cap \bigcap_{q|d} \textup{ker}(\lambda_q)$.  
\end{corollary}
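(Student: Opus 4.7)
The plan is to extract a very restrictive representative for any $\alpha$ in the stated intersection via Lemma~\ref{lem:eq_local_conditions}, and then verify the triviality of $\gamma_{\tilde q}$ by a direct quadratic-reciprocity computation exploiting the defining conditions of $\mathcal{Q}_{E/\mathbb{Q}}$.

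In more detail, I would first take $\alpha \in \Sel^{\varphi}_{\mathcal{S}_{d}}(E/\mathbb{Q})^{+} \cap \bigcap_{q \mid d}\ker(\lambda_q)$. By Lemma~\ref{lem:eq_local_conditions}, $\alpha$ admits a square-free integer representative supported only on primes dividing $2dN_E$. The condition $\lambda_q(\alpha)=0$ for every $q \mid d$, combined with square-freeness, forces $v_q(\alpha)=0$ for each such $q$; the ``$+$''--condition then allows me to pick a positive representative. Hence I may assume $\alpha = 2^{e_0}\prod_i p_i^{e_i}$ with $e_j \in \{0,1\}$ and each $p_i$ an odd prime of bad reduction for~$E$.

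Next, for $\tilde q \in \mathcal{Q}_{E/\mathbb{Q}}$, the splitting requirement at every $v \mid 2N_E$ in $\mathbb{Q}(\sqrt{-\tilde q})$ forces these places to be unramified in the extension, so $\tilde q \nmid 2N_E$. Thus $v_{\tilde q}(\alpha)=0$, and proving $\gamma_{\tilde q}(\alpha)=0$ reduces to showing $\bigl(\tfrac{\alpha}{\tilde q}\bigr)=1$. Since $\tilde q \equiv 7 \pmod 8$ we have $\bigl(\tfrac{2}{\tilde q}\bigr)=1$. For each odd $p \mid N_E$, the defining condition $\bigl(\tfrac{-\tilde q}{p}\bigr)=1$ yields $\bigl(\tfrac{\tilde q}{p}\bigr)=(-1)^{(p-1)/2}$, and combining this with quadratic reciprocity (using $\tilde q \equiv 3 \pmod 4$, so $(\tilde q-1)/2$ is odd) gives $\bigl(\tfrac{p}{\tilde q}\bigr)=1$. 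Multiplying over all prime factors of~$\alpha$ then yields $\bigl(\tfrac{\alpha}{\tilde q}\bigr)=1$, as required.

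There is no serious obstacle here --- the argument reduces to a direct unpacking of the hypotheses followed by quadratic reciprocity. The only point requiring careful verification is that the combined conditions ``$(+)$'' and $\bigcap_{q\mid d}\ker(\lambda_q)$ strip the representative of all prime factors outside $\{2\}\cup\{p \mid N_E\}$ and of the $-1$ sign. Once this reduction is in place, the three defining properties of $\mathcal{Q}_{E/\mathbb{Q}}$ (namely $\tilde q \equiv 7 \pmod 8$, the splitting of odd bad primes in $\mathbb{Q}(\sqrt{-\tilde q})$, and the implicit consequence $\tilde q \nmid 2N_E$) are tailored precisely to make $\bigl(\tfrac{\alpha}{\tilde q}\bigr)=1$ automatic.
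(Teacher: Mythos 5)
Your proposal is correct and follows essentially the same route as the paper: both reduce to a positive, square-free representative supported on $2N_{E}$ via Lemma~\ref{lem:eq_local_conditions} together with the conditions $\lambda_q(\alpha)=0$ and positivity, and then conclude $\bigl(\tfrac{\alpha}{\tilde q}\bigr)=1$ from $\tilde q\equiv 7 \tmod 8$ and quadratic reciprocity applied to $\bigl(\tfrac{-\tilde q}{p}\bigr)=1$ for odd $p\mid N_E$. Your added observations (that $\tilde q\nmid 2N_E$, and the explicit sign bookkeeping in the reciprocity step) are correct details the paper leaves implicit.
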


\begin{proof}
In view of Lemma~\ref{lem:eq_local_conditions}, any 
$\alpha\in \Sel^{\varphi}_{\mathcal{S}_{d}}(E/\mathbb{Q})^{+} \cap \bigcap_{q|d} \textup{ker}(\lambda_q)$
admits a positive, square–free representative supported only on primes dividing $2N_{E}$. From this, we deduce that $v_{\tilde{q}}(\alpha)=0$. For $\tilde{q}\in\mathcal{Q}_{E/\mathbb{Q}}$ one has $(\tfrac{2}{\tilde{q}})=1$, and for every odd $p\mid N_{E}$ the condition
$(\tfrac{-\tilde{q}}{p})=1$ implies $(\tfrac{p}{\tilde{q}})=1$ by quadratic reciprocity. Hence  $(\tfrac{\alpha}{\tilde{q}})=1$.
\end{proof}

\begin{lemma} \label{lem:support}
Let $d \in \mathcal{D}_{E/\mathbb{Q}}$, and write $\Sigma$ for a supporting set for 
$
\Sel^{\varphi}_{\mathcal{S}_d}(E^{-d}/\mathbb{Q})
=\Sel^{\varphi}_{\mathcal{S}_d}(E/\mathbb{Q})
$
as in Notation~\ref{not:GammaLambda}. Then $\mathcal{S}_{d}^{0} \subseteq \Sigma$.
\end{lemma}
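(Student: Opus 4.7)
\medskip

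\noindent\textbf{Proof proposal.}
The plan is to produce, for each prime $q\mid d$, an explicit element $\alpha\in \Sel^{\varphi}_{\mathcal{S}_d}(E/\mathbb{Q})$ with $\lambda_q(\alpha)=1$. Once such an $\alpha$ is found, the very definition of a supporting set in Notation~\ref{not:GammaLambda} forces $q\in\Sigma$, and hence $\mathcal{S}_d^{0}\subseteq\Sigma$.

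The natural candidate is the class of $-d$ in $\mathbb{Q}^{\times}/\mathbb{Q}^{\times 2}$. Under the identification $H^{1}(\mathbb{Q},E[\varphi])\cong\mathbb{Q}^{\times}/\mathbb{Q}^{\times 2}$, this is a single square class that does the job for every prime dividing $d$ simultaneously, since $d$ is square-free and positive, so $v_q(-d)=1$ for every $q\mid d$, giving $\lambda_q(-d)=1$. Thus the only work is to verify that $-d$ lies in $\Sel^{\varphi}_{\mathcal{S}_d}(E/\mathbb{Q})$, that is, to check the local condition $\mathcal{L}_{\varphi,v}(E)$ at every finite place $v\notin\mathcal{S}_d$ (the archimedean place $\infty\in\mathcal{S}_d$ carries no condition in the truncated group).

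The verification splits into two cases. For $v\mid 2N_E$, which is automatically coprime to $d$ since every $q_i\in\mathcal{Q}_{E/\mathbb{Q}}$ is coprime to $2N_E$, the Heegner hypothesis built into Definition~\ref{def:primes} gives that $v$ splits in $\mathbb{Q}(\sqrt{-q_i})$ for every $i$, so $-q_i\in\mathbb{Q}_v^{\times 2}$. Writing $d=\prod_{i=1}^{r} q_i$ with $r$ odd, one obtains $-d=\prod_{i=1}^{r}(-q_i)\in\mathbb{Q}_v^{\times 2}$, so the restriction of $-d$ to $\mathbb{Q}_v$ is trivial and therefore lies in $\mathcal{L}_{\varphi,v}(E)$. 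For odd $v\nmid 2dN_E$, we have $v_v(-d)=0$, so $-d$ is a unit and its class lies in the unramified subgroup $H^{1}_{\mathrm{ur}}(\mathbb{Q}_v,E[\varphi])=\mathcal{L}_{\varphi,v}(E)$ by Lemma~\ref{lem:unramified_condition}. This completes the verification that $-d\in\Sel^{\varphi}_{\mathcal{S}_d}(E/\mathbb{Q})$, and the lemma follows.

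I do not anticipate any substantial obstacle: the argument is essentially a direct unwinding of the definitions. The only mildly delicate point is the local computation at $v\mid 2N_E$, where one must combine the splitting condition defining $\mathcal{Q}_{E/\mathbb{Q}}$ with the parity stipulation that $r$ is odd in Definition~\ref{def:primes}. It is reassuring that both features of $\mathcal{D}_{E/\mathbb{Q}}$ are used precisely here.
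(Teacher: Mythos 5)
Your proof is correct and follows essentially the same strategy as the paper: exhibit an explicit square class in $\Sel^{\varphi}_{\mathcal{S}_d}(E/\mathbb{Q})$ with odd valuation at each $q\mid d$, checking the local condition at $v\mid 2N_E$ via the splitting hypothesis built into Definition~\ref{def:primes} and at the remaining places via Lemma~\ref{lem:unramified_condition}. The only difference is that you use the single class $-d$ where the paper verifies $-q\in\Sel^{\varphi}_{\mathcal{S}_d}(E/\mathbb{Q})$ separately for each prime $q\mid d$; that slightly stronger conclusion (namely $\langle -q_1,\dots,-q_r\rangle\subseteq\Sel^{\varphi}_{\mathcal{S}_d}(E/\mathbb{Q})$) is what is actually reused later in the proof of Theorem~\ref{thm:master2}, so it is worth recording, though for the lemma as stated your witness suffices.
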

\begin{proof}
Let $q\mid d$, and view $\textup{Sel}_{\mathcal{S}_{d}}^{\varphi}(E/\mathbb{Q})$ as a subgroup of $\mathbb{Q}^{\times}/\mathbb{Q}^{\times }$. 
It suffices to show that the class of $-q\bmod\mathbb{Q}^{\times2}$ lies in 
$\Sel^{\varphi}_{\mathcal{S}_d}(E/\mathbb{Q})$. This is tantamount to the image of $-q$ in 
$H^{1}(\mathbb{Q}_{p},E[\varphi])\cong\mathbb{Q}_{p}^{\times}/\mathbb{Q}_{p}^{\times2}$
landing in $\mathcal{L}_{\varphi,p}(E)$ under localization for all $p\nmid d\infty$. If $p\nmid 2dN_E \infty$, then $E$ has good reduction and 
$\mathcal{L}_{\varphi,p}(E)=H^{1}_{\mathrm{ur}}(\mathbb{Q}_{p},E[\varphi])$.
Since $v_p(-q)=0$, then $-q$ satisfies the local condition at $ p \nmid 2N_E$ (see Lemma~\ref{lem:unramified_condition}).
If $p\mid 2N_E$, then by the defining property of $\mathcal{D}_{E/\mathbb{Q}}$, 
$-q$ is a square in $\mathbb{Q}_{p}^{\times}$, 
hence its class in $\mathbb{Q}_{p}^{\times}/\mathbb{Q}_p^{\times 2}$ is trivial.
\end{proof}

As seen by Lemma \ref{lem:eq_local_conditions}, any difference between $\Sel^{\varphi}(E/\mathbb{Q})$ and $\Sel^{\varphi}(E^{-d}/\mathbb{Q})$ can only arise from the local conditions at $q\mid d$ and at $\infty$.  
By Lemma~\ref{loc_conditions_at_q} and Corollary~\ref{cor:infinity_under_twist}, these local conditions are governed by the signs of $b$, $\Delta_{E}$, and $a$.  
In what follows we analyse separately the four possible situations:
\[
\text{(i) } b<0;\quad
\text{(ii) } b>0,\ \Delta_{E}<0;\quad
\text{(iii) } b>0,\ \Delta_{E}>0,\ a>0;\quad
\text{(iv) } b>0,\ \Delta_{E}>0,\ a<0.
\]
%Note that these four cases are mutually exclusive and collectively exhaustive.
\begin{proof}[Proof of Theorem~\ref{thm:master}] We treat the four sign cases separately.  
\medskip \\
\noindent\textbf{Case (i):} {$b<0$}.
By Lemma \ref{lem:infinity}, $H^{1}_{\varphi}(\mathbb{R},E[\varphi])$ is trivial. Since $E/\mathbb{Q}$ has good reduction at $q$, then $H^{1}_{\varphi}(\mathbb{Q}_{q},E[\varphi])=H^{1}_{\textup{ur}}(\mathbb{Q}_{q},E[\varphi]$. It follows from \hbox{Lemma \ref{lem:local_conditions_infinity}} and Lemma~\ref{lem:local_condition_cases}(iii), that $\textup{Sel}^{\varphi}(E/\mathbb{Q}) = \textup{Sel}_{\mathcal{S}_{d}}^{\varphi}(E/\mathbb{Q})^{+} \cap \bigcap_{q|d} \textup{ker}(\lambda_q).$

On the other hand, the local conditions for $E^{-d}$ at $q|d$ and at $\infty$ are the trivial groups by Lemma~\ref{loc_conditions_at_q} and Lemma \ref{lem:infinity}. Therefore, by combining \hbox{Corollary \ref{lem:local_conditions_infinity}} and Lemma~\ref{lem:local_condition_cases}(ii):
\begin{align*}
\Sel^{\varphi}(E^{-d}/\mathbb{Q})
&= \Sel^{\varphi}_{\mathcal{S}_d}(E^{-d}/\mathbb{Q})^{+} \cap \bigcap_{q|d} \big{(} \ker(\lambda_{q}) \cap \ker(\gamma_{q}) \big{)}\\
&\stackrel{\textup{Lem. \ref{lem:eq_local_conditions}}}= \Sel^{\varphi}_{\mathcal{S}_d}(E/\mathbb{Q})^{+} \cap \bigcap_{q|d} \big{(} \ker(\lambda_{q}) \cap \ker(\gamma_{q}) \big{)}. 
\end{align*}
By Corollary \ref{cor:gamma_trivial}, $\gamma_q$ is trivial on $ \Sel^{\varphi}_{\mathcal{S}_d}(E/\mathbb{Q})^{+} \cap \bigcap_{q|d}  \ker(\lambda_{q})$, which establishes the required equality.
\medskip \\
\noindent \textbf{Case (ii):} $\Delta_{E}<0$, $b>0$. By combining Lemma~\ref{lem:infinity} with Lemma \ref{lem:local_conditions_infinity}, we deduce
$\Sel^{\varphi}_{\mathcal{S}_{d}}(E/\mathbb{Q})=\Sel^{\varphi}_{\mathcal{S}_{d}^{0}}(E/\mathbb{Q})$. Since $E/\mathbb{Q}$ has good reduction at $q$, then the required description for $\Sel^{\varphi}(E/\mathbb{Q})$ follows by repeated application of Lemma \ref{lem:local_condition_cases}(iii).
For the twist $E^{-d}$, Corollary~\ref{cor:infinity_under_twist} gives 
$\mathcal{L}_{\varphi,\infty}(E^{-d})=H^{1}(\mathbb{R},E[\varphi])$, and Lemma~\ref{loc_conditions_at_q} gives 
$\mathcal{L}_{\varphi,q}(E^{-d})=H^{1}(\mathbb{Q}_{q},E[\varphi])$.  
Hence by combining Lemma \ref{lem:local_condition_cases}(i) and Lemma \ref{lem:local_conditions_infinity}(i), 
$
\Sel^{\varphi}(E^{-d}/\mathbb{Q}) 
\;=\; \Sel^{\varphi}_{\mathcal{S}_{d}}(E^{-d}/\mathbb{Q}) \stackrel{\textup{Lem. \ref{lem:eq_local_conditions}}}= \Sel^{\varphi}_{\mathcal{S}_{d}}(E/\mathbb{Q}).
$
\medskip \\
\noindent \textbf{Case (iii):} $\Delta_{E}>0$, $b>0$ and $a<0$.
By Lemma~\ref{lem:infinity}, $\mathcal{L}_{\varphi,\infty}(E/\mathbb{Q})=H^{1}(\mathbb{R},E[\varphi])$, and so no constraint at $\infty$. Arguing as in case (ii), but instead invoking Lemma \ref{lem:local_condition_cases}(iii) for $q|d$,  we get 
\[ \textup{Sel}^{\varphi}(E/\mathbb{Q})=\textup{Sel}_{\mathcal{S}_{d}}^{\varphi}(E/\mathbb{Q}) \cap \bigcap_{q|d} \textup{ker}(\lambda_q). \]
For $E^{-d}$, $H^{1}(\mathbb{R},E^{-d}[\varphi])$ is trivial by Lemma \ref{lem:local_conditions_infinity}, while $H^{1}(\mathbb{Q}_q,E^{-d}[\varphi])$ is given in \hbox{Lemma \ref{loc_conditions_at_q}(iii)}. Then by combining Lemma \ref{lem:local_conditions_infinity}(ii) and Lemma \ref{lem:local_condition_cases}(iv)--(v), we arrive at
\begin{align*} \textup{Sel}^{\varphi}(E^{-d}/\mathbb{Q}) &= \textup{Sel}_{\mathcal{S}_{d}}^{\varphi}(E^{-d}/\mathbb{Q})^{+} \cap \bigcap_{q \in \mathcal{T}_{d}^{+}} \textup{ker}(\gamma_q)  \cap \bigcap_{q \in \mathcal{T}_{d}^{-}} \textup{ker}(\gamma_q + \lambda_q) \\
 &\stackrel{\textup{Lem. \ref{lem:eq_local_conditions}}}= \textup{Sel}_{\mathcal{S}_{d}}^{\varphi}(E/\mathbb{Q})^{+} \cap \bigcap_{q \in \mathcal{T}_{d}^{+}} \textup{ker}(\gamma_q)  \cap \bigcap_{q \in \mathcal{T}_{d}^{-}} \textup{ker}(\gamma_q + \lambda_q).
 \end{align*}
\medskip \\
\noindent \textbf{Case (iv):} $\Delta_{E}>0$, $b>0$ and $a>0$.
Use the local conditions for $q|d$ and at $\infty$ from Lemma \ref{lem:local_conditions_infinity} and Lemma \ref{loc_conditions_at_q}. Then, argue  as in Case (iii).
\end{proof}

\section{Effect of twisting on Selmer ranks}

In this subsection, we record the variations in the $\varphi$–Selmer ranks of $E$.  
These will be described explicitly in terms of the linear–algebraic framework developed in Section~\ref{subsec_linear_algebra}, together with the explicit description of $\Sel^{\varphi}(E/\Q)$ and $\Sel^{\varphi}(E^{-d}/\Q)$ provided by Theorem~\ref{thm:master}.  
In particular, this formulation allows us to express the change in $\varphi$–Selmer ranks under quadratic twisting purely in terms of matrices over $\mathbb{F}_2$.

\subsection{Rank variation under twisting}
Before stating the main result, we introduce the following block matrices that describe the effect of twisting on Selmer ranks.

\begin{notation}\label{not:twist-matrices}
Assume $b>0$ and $\Delta_{E}>0$, and write $\mathcal{T}_{d}^{\pm}$ as in Notation~\ref{not:T_d}.
Set $r_{1}:=|\mathcal{T}_{d}^{+}|$, $r_{2}:=|\mathcal{T}_{d}^{-}|$, and $r:=r_{1}+r_{2}=\omega(d)$.  
All matrices appearing below are over~$\mathbb{F}_{2}$.  
We write $\mathbf{I}_{k}$ for the $k\times k$ identity matrix and 
$\mathbf{1}_{m\times n}$ for the $m\times n$ all–ones matrix.  
For any matrix~$\mathbf{M} \in \textup{Mat}_{m \times n}(\mathbb{F}_{2})$, we denote by $\overline{\mathbf{M}}:=\mathbf{M}+\mathbf{1}_{m\times n}$ its {complement}, obtained by interchanging $0$ and~$1$ entrywise.

\medskip
With rows and columns ordered by $\mathcal{T}_d^{+}\sqcup\mathcal{T}_d^{-}$, define
\[
\mathbf{A}_d
:=
\begin{bmatrix}
\mathbf{A}_{++} & \mathbf{A}_{+-}\\[2pt]

\mathbf{A}_{-+} & \mathbf{A}_{--}+\mathbf{I}_{r_2}
\end{bmatrix}
\in \mathrm{Mat}_{r\times r}(\F_2),
\]
where each block 
\(\mathbf{A}_{i,j}=(a_{p,q})_{p\in\mathcal{T}_d^{i},\,q\in\mathcal{T}_d^{j}}\)
is defined entrywise by
\[
a_{p,q}=
\begin{cases}
0, & p=q\ \text{or}\ \bigl(\tfrac{q}{p}\bigr)=+1,\\[2pt]
1, & \bigl(\tfrac{q}{p}\bigr)=-1.
\end{cases}
\]
Since all $p,q\in\mathcal{T}_d^{+}\cup\mathcal{T}_d^{-}$ satisfy $p\equiv q\equiv7\tmod{8}$,
quadratic reciprocity gives $\bigl(\tfrac{p}{q}\bigr)\bigl(\tfrac{q}{p}\bigr)=-1$ for $p\neq q$.
Consequently, for the off–diagonal blocks one has
$
\mathbf{A}_{-,+}\;=\;\overline{\bigl(\mathbf{A}_{+,-}\bigr)^{\!\textup{Tr}}}.
$
In the sequel we also use the block matrices  
\[
\widetilde{\mathbf{A}}_{d}
:=\bigl[\,\mathbf{1}_{r\times1}\;\big|\;\mathbf{A}_{d}\bigr]
\;\in\;\mathrm{Mat}_{r\times(1+r)}(\F_{2}),
\qquad
\widehat{\mathbf{A}_{d}}
:=\begin{bmatrix}
\mathbf{1}_{1\times r}\\[2pt]
\overline{\mathbf{A}_{d}}
\end{bmatrix}
\;\in\;\mathrm{Mat}_{(1+r)\times r}(\F_{2}).
\]

In addition, we define an auxiliary invariant $\eta_{\varphi}(d)\in\{0,1\},$
which takes the value $1$ if and only if 
$\Sel_{\mathcal{S}_{d}}^{\varphi}(E/\Q)$ contains an element represented by a squarefree integer $\alpha$ satisfying one of the following:
\begin{itemize}
\item $\alpha>0$ and $\alpha$ is divisible by an odd number of primes of $d$;
\item $\alpha<0$ and $\alpha$ is divisible by an even number of primes of $d$.
\end{itemize}
Otherwise, set $\eta_{\varphi}(d)=0$.
\end{notation}

\begin{theorem} \label{thm:master2}
Let $E/\mathbb{Q}$ be given by the affine model
$
E:\ y^{2}=x\bigl(x^{2}+a x+b\bigr),
$
and let $d\in\mathcal{D}_{E/\mathbb{Q}}$.  
Then, one has:
\begin{enumerate}
\item[(i)] If $b<0$ (and hence $\Delta_{E}>0$), then
\[
\textup{rk}_{\varphi}(E^{-d}/\mathbb{Q})
=\textup{rk}_{\varphi}(E/\mathbb{Q}).
\]

\medskip

\item[(ii)] If $\Delta_E<0$ and $b>0$, then
\begin{align*}
\textup{rk}_{\varphi}(E^{-d}/\mathbb{Q}) = \textup{rk}_{\varphi}(E/\mathbb{Q}) + \omega(d).
\end{align*}

\item[(iii)] If $\Delta_{E}>0$, $b>0$, and $a<0$, then
%\begin{align*}
\begin{align*}
\operatorname{rk}_{\varphi}(E^{-d}/\mathbb{Q})
=\operatorname{rk}_{\varphi}(E/\mathbb{Q})
+\omega(d)-
\begin{cases}
1+\operatorname{rank}_{\mathbb{F}_{2}}(\mathbf{A}_{d}), & \text{if } \eta_{\varphi}(d)=1,\\[4pt]
\operatorname{rank}_{\mathbb{F}_{2}}(\widehat{\mathbf{A}_{d}}), & \text{if } \eta_{\varphi}(d)=0.
\end{cases}
\end{align*}
%\end{align*}

\item[(iv)] If $\Delta_{E}>0$, $b>0, a>0$, then
\begin{align*}
\mathrm{rk}_{\varphi}(E^{-d}/\Q)
=\mathrm{rk}_{\varphi}(E/\Q)
+ \omega(d)+
\begin{cases}
1-\mathrm{rank}_{\F_{2}}\!\bigl(\widetilde{\mathbf{A}}_{d}\bigr),
& \text{if } \eta_{\varphi}(d)=1,\\[4pt]
-\mathrm{rank}_{\F_{2}}\!\bigl(\overline{\mathbf{A}_{d}}\bigr),
& \text{if } \eta_{\varphi}(d)=0.
\end{cases}
\end{align*}
\end{enumerate}
\end{theorem}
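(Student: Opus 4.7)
The proof combines Theorem~\ref{thm:master}, which expresses $\Sel^{\varphi}(E/\mathbb{Q})$ and $\Sel^{\varphi}(E^{-d}/\mathbb{Q})$ as intersections of the common relaxed group $V := \Sel^{\varphi}_{\mathcal{S}_d}(E/\mathbb{Q})$ with kernels of $\varepsilon_\infty$, $\lambda_q$ and $\gamma_q$, with Lemma~\ref{lem:gamma-lambda-matrix}, which translates each such intersection into an $\mathbb{F}_2$-matrix rank. Case (i) is immediate, since Theorem~\ref{thm:master}(i) already gives equality of the two Selmer groups.

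The backbone of the remaining cases is a single decomposition of $V$. Set $r := \omega(d)$. By Lemma~\ref{lem:support}, each class $-q_j$ with $q_j \mid d$ lies in $V$, and since $\lambda_{q_i}(-q_j) = \delta_{ij}$ the functionals $\lambda_{q_1}, \ldots, \lambda_{q_r}$ are $\mathbb{F}_2$-linearly independent on $V$. Writing $W := V \cap \bigcap_{q \mid d} \ker(\lambda_q)$ one obtains a direct sum decomposition $V = \langle -q_1, \ldots, -q_r \rangle \oplus W$ with $\dim V - \dim W = r$. This already settles case (ii): Theorem~\ref{thm:master}(ii) identifies $\Sel^{\varphi}(E^{-d}/\mathbb{Q}) = V$ and $\Sel^{\varphi}(E/\mathbb{Q}) = W$, so the ranks differ by $\omega(d)$.

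For cases (iii) and (iv), the constraint matrix cutting out either Selmer group can be read off block-by-block against the above decomposition. Two small computations do the work. First, a direct evaluation using $(\tfrac{-1}{p}) = -1$ (valid since $p \equiv 7 \bmod 8$) and the defining formula for $a_{p,q}$ yields $\gamma_p(-q_j) = 1 + a_{p, q_j}$ for all $p, q_j \mid d$. Consequently the rows $\mathbf{c}^{(p)}$ for $p \in \mathcal{T}_d^+$ and $\mathbf{c}^{(p)} + \mathbf{e}^{(p)}$ for $p \in \mathcal{T}_d^-$, restricted to the $\langle -q_j \rangle$-block, assemble into $\mathbf{A}_d$ after a single addition of the $\varepsilon_\infty$-row (which is identically $\mathbf{1}$ on this block). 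Second, on $W$ one has $\gamma_p\!\mid_W = \varepsilon_\infty\!\mid_W$ for every $p \mid d$: positivity is treated by Corollary~\ref{cor:gamma_trivial}, while for negative representatives the sign contributes $(\tfrac{-1}{p}) = -1$. After the same row operation the $W$-block of every $\gamma$-row becomes zero, and the combined constraint matrix takes the block form
\[
\begin{bmatrix}
\mathbf{1}_r^{\mathrm{T}} & \boldsymbol{\varepsilon}_W^{\mathrm{T}} \\[2pt]
\mathbf{A}_d & \mathbf{0}
\end{bmatrix},
\]
in which the $\varepsilon_\infty$-row is present precisely when a ``$+$'' appears in the line of Theorem~\ref{thm:master} that describes the Selmer group in question. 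Unwinding the definitions gives $\eta_\varphi(d) = 1 \iff \boldsymbol{\varepsilon}_W \ne 0$, since for $\alpha = \sum c_j(-q_j) + w$ one computes $\varepsilon_\infty(\alpha) + \sum_{q\mid d}\lambda_q(\alpha) = \varepsilon_\infty(w)$.

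Reading the rank of this block matrix in each subcase yields the theorem. In case (iii) the $E$-side contributes only the lower block, so its codimension in $V$ is $\omega(d)$; the $E^{-d}$-side adds the top $\varepsilon_\infty$-row, producing codimension $1 + \operatorname{rank}(\mathbf{A}_d)$ when $\eta_\varphi(d) = 1$, and $\operatorname{rank}(\widehat{\mathbf{A}_d})$ when $\eta_\varphi(d) = 0$ (the latter being the row-operation identity $\operatorname{rank}\!\begin{bmatrix}\mathbf{1}^{\mathrm{T}} \\ \mathbf{A}_d\end{bmatrix} = \operatorname{rank}(\widehat{\mathbf{A}_d})$). Subtracting gives the stated $\delta_\varphi(d)$. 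Case (iv) is analogous with the ``$+$'' decoration moved to the $E$-side: when $\eta_\varphi(d)=1$ a column operation that adds a nontrivial $\boldsymbol{\varepsilon}_W$-column back into $\overline{\mathbf{A}_d}$ recovers $\mathbf{A}_d$ and produces the bordered matrix $\widetilde{\mathbf{A}}_d$, while $\eta_\varphi(d)=0$ leaves the matrix $\overline{\mathbf{A}_d}$ untouched. The main obstacle is precisely this bookkeeping: tracking how the $\mathbf{1}_{r\times r}$-shift from repeated $\varepsilon_\infty$-row additions interchanges $\mathbf{A}_d \leftrightarrow \overline{\mathbf{A}_d}$ and, when combined with the $\boldsymbol{\varepsilon}_W$-column, produces the augmented variants $\widehat{\mathbf{A}_d}$ and $\widetilde{\mathbf{A}}_d$ at exactly the right places.
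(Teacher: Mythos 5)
Your proposal is correct and follows essentially the same route as the paper: both start from Theorem~\ref{thm:master} and Lemma~\ref{lem:gamma-lambda-matrix}, single out the subspace $\langle -q_{1},\dots,-q_{r}\rangle$ supplied by Lemma~\ref{lem:support}, characterise $\eta_{\varphi}(d)$ via the nonvanishing of $\varepsilon_{\infty}$ on the complementary part of $V$, and perform the same row/column manipulations to pass between $\mathbf{A}_{d}$, $\overline{\mathbf{A}_{d}}$, $\widehat{\mathbf{A}_{d}}$ and $\widetilde{\mathbf{A}}_{d}$. The only difference is presentational (your direct-sum decomposition $V=V_{0}\oplus W$ versus the paper's projection $j$ onto the $\{\infty\}\cup\mathcal{S}_{d}^{0}$-coordinates), which is mathematically equivalent.
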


\begin{proof}
We consider the four cases of Theorem \ref{thm:master} separately. 
\medskip \\
\textbf{Case (i):} Clear.    \\
\medskip \\
\noindent \textbf{Case (ii):} 
Set $V:=\Sel_{\mathcal{S}_{d}}^{\varphi}(E/\mathbb{Q})\subset\mathbb{Q}^{\times}/\mathbb{Q}^{\times 2}$. 
By Theorem~\ref{thm:master}(ii) and Lemma~\ref{lem:gamma-lambda-matrix}, 
the difference 
$\operatorname{rk}_{\varphi}(E^{-d}/\mathbb{Q})-\operatorname{rk}_{\varphi}(E/\mathbb{Q})$ 
is the rank of $\Lambda_{\mathcal{S}_{d}^{0}}\!\mid_{V}$. Write $\mathcal{S}_{d}^{0}=\{q_{1},\ldots,q_{r}\}$ and let 
$V_{0}:=\langle -q_{1},\ldots,-q_{r}\rangle\subseteq V$ 
(see Lemma~\ref{lem:support}).  
Taking $\Sigma=\mathcal{S}_{d}^{0}\cup\mathcal{P}$ with $\mathcal{P}=\{p\mid 2N_{E}\}$,  
the embedding $\iota:V\to\mathbb{F}_{2}^{\Sigma\cup\{\infty\}}$ of 
Notation~\ref{not:GammaLambda} yields the matrix
\[
\Lambda_{\mathcal{S}_{d}^{0}}
=\begin{bmatrix}
\mathbf{0}_{r\times1} & \mathbf{I}_{r} & \mathbf{0}_{r\times|\mathcal{P}|}
\end{bmatrix},
\]
whose rows are indexed by the primes in $\mathcal{S}_{d}^{0}$ 
and whose columns are ordered as $(\infty\ \mid\ \mathcal{S}_{d}^{0}\ \mid\ \mathcal{P})$.
Let 
$\pi:\mathbb{F}_{2}^{\{\infty\}\cup\Sigma}\to\mathbb{F}_{2}^{\mathcal{S}_{d}^{0}}$ 
be the projection omitting the $\infty$– and $\mathcal{P}$–coordinates.  
Then, we deduce that $\operatorname{rank}(\Lambda_{\mathcal{S}_{d}^{0}}\!\mid_{V})
=\dim_{\mathbb{F}_{2}}\pi(\iota(V)).$
Since $\pi(\iota(V_{0}))=\mathbb{F}_{2}^{\mathcal{S}_{d}^{0}}$, the same holds for $\pi(\iota(V))$, and so $
\operatorname{rank}\bigl(\Lambda_{\mathcal{S}_{d}^{0}}\!\mid_{V}\bigr)
=\omega(d).
$
\medskip \\
\noindent \textbf{Case (iii):}
In what follows, we retain all notation from case (ii).
As in the proof of part (ii),
$\mathrm{rk}_{\varphi}(E/\mathbb{Q})
=\dim_{\mathbb{F}_{2}}V-\omega(d)$. To compute $\mathrm{rk}_{\varphi}(E^{-d}/\Q)$, fix an ordering on the supporting set 
$\Sigma=\{\infty\}\sqcup\mathcal{T}_{d}^{+}\sqcup\mathcal{T}_{d}^{-}\sqcup\mathcal{P}$.
By Lemma~\ref{lem:gamma-lambda-matrix}, 
$\Sel^{\varphi}(E^{-d}/\Q)$ is the kernel of the matrix $\tilde{\mathbf{A}}$, restricted to $\iota(V)$.
With respect to the column ordering 
$(\infty\mid\mathcal{T}_{d}^{+}\mid\mathcal{T}_{d}^{-}\mid\mathcal{P})$, 
$\tilde{\mathbf{A}}$ takes the block form
\[
\widetilde{\mathbf{A}}:=\begin{bmatrix}
S_{\infty}\\[3pt]
\Gamma_{\mathcal{T}_{d}^{+}}\\[3pt]
\Gamma_{\mathcal{T}_{d}^{-}}+\Lambda_{\mathcal{T}_{d}^{-}}
\end{bmatrix}
=\begin{bmatrix}
1 & \mathbf{0}_{1\times r_{1}} & \mathbf{0}_{1\times r_{2}} & \mathbf{0}_{1\times p}\\[4pt]
\mathbf{1}_{r_{1}\times 1} & \mathbf{A}_{+,+} & \mathbf{A}_{+,-} & \mathbf{0}_{r_{1}\times p}\\[4pt]
\mathbf{1}_{r_{2}\times 1} & \mathbf{A}_{-,+} & \mathbf{A}_{-,-}+\mathbf{I}_{r_{2}} & \mathbf{0}_{r_{2}\times p}
\end{bmatrix}\!.
\]
Let 
$j:\F_{2}^{\{\infty\}\cup\Sigma}\twoheadrightarrow
\F_{2}^{\{\infty\}\cup(\mathcal{T}_{d}^{+}\cup\mathcal{T}_{d}^{-})}=\F_{2}^{1+r}$
be the obvious projection. Then
\[
\operatorname{rank}\!\bigl(\widetilde{\mathbf{A}}\!\mid_{V}\bigr)
=\operatorname{rank}\!\bigl(\mathbf{A}\!\mid_{j(V)}\bigr),
\qquad \textup{ where }
\mathbf{A}:=
\begin{bmatrix}
1 & \mathbf{0}_{1\times r}\\[2pt]
\mathbf{1}_{r\times 1} & \mathbf{A}_{d}
\end{bmatrix}.
\]
Let $V_{0}\subseteq V$ be as in Case~(ii). Then $j(V_{0})$ is the subspace of $\F_{2}^{1+r}$ with basis $e_{\infty}+e_{i}$ $(1\le i\le r)$, where $\{e_{\infty}, e_{1}, \dots, e_r\}$ denotes the standard basis for $\mathbb{F}_{2}^{1+r}$. It follows that $j(V_{0})\subseteq j(V)\subseteq \F_{2}^{1+r}$, and exactly one of the following holds:
\begin{enumerate}[leftmargin=*,itemsep=2pt,topsep=2pt]
\item[(i)] $j(V_{0})\subsetneq j(V)=\F_{2}^{1+r}$, hence 
$\operatorname{rank} \ \!\bigl(\mathbf{A}\!\mid_{j(V)}\bigr)=\operatorname{rank} \ \!\bigl(\mathbf{A}\bigr)    =1+\operatorname{rank} \ (\mathbf{A}_{d})$.
\item[(ii)] $j(V_{0})=j(V)\subsetneq \F_{2}^{1+r}$, in which case 
$\mathbf{A}\!\mid_{j(V)}=\widehat{\mathbf{A}_{d}}$ and 
$\operatorname{rank} \ \!\bigl(\mathbf{A}\!\mid_{j(V)}\bigr)=\operatorname{rank}(\widehat{\mathbf{A}_{d}})$.
\end{enumerate}
It follows that
\[
\operatorname{rk}_{\varphi}(E^{-d}/\mathbb{Q})
= \dim(V) -
\begin{cases}
1+\operatorname{rank}(\mathbf{A}_{d}), & \text{if } j(V)\supsetneq j(V_{0}),\\[4pt]
\operatorname{rank}(\widehat{\mathbf{A}_{d}}), & \text{if } j(V)= j(V_{0}).
\end{cases}
\]
Finally, since $j(V)\supsetneq j(V_{0})$ if and only if $\eta_{\varphi}(d)=1$, the result follows. \\
\medskip \\
\noindent \textbf{Case (iv):}
We retain all notation from Case (iii). Then, $\textup{rk}_{\varphi}(E/\mathbb{Q})$ is determined by the kernel of
\[
\begin{bmatrix}
1 & \mathbf{0}_{1\times r_1} & \mathbf{0}_{1\times r_2} & \mathbf{0}_{1\times p}\\[2pt]
\mathbf{0}_{r_1\times 1} & \mathbf{I}_{r_1} & \mathbf{0}_{r_1\times r_2} & \mathbf{0}_{r_1\times p}\\[2pt]
\mathbf{0}_{r_2\times 1} & \mathbf{0}_{r_2\times r_1} & \mathbf{I}_{r_2} & \mathbf{0}_{r_2\times p}
\end{bmatrix}.
\]
Arguing as in part~(iii), the rank of this matrix on $V$ is $\,\omega(d)\,$ if $\eta_{\varphi}(d)=0$ and $\,\omega(d)+1\,$ if $\eta_{\varphi}(d)=1$. Therefore,
$\textup{rk}_{\varphi}(E/\mathbb{Q})=\dim V-\omega(d)-\big(1\text{ if }\eta_{\varphi}(d) = 1,\ \text{else }0\big)$. Adding on, by Theorem~\ref{thm:master}\,(iv) the dimension of $\Sel^{\varphi}(E^{-d}/\Q)$ is given as the kernel of
\[
\mathbf{M}:=\begin{bmatrix}
\mathbf{1}_{r_{1}\times 1} & \mathbf{A}_{+,+} & \mathbf{A}_{+,-} & \mathbf{0}_{r_1 \times p}\\[4pt]
\mathbf{1}_{r_{2}\times 1} & \mathbf{A}_{-,+} & \mathbf{A}_{-,-}+\mathbf{I}_{r_2} & \mathbf{0}_{r_2 \times p}
\end{bmatrix},
\]
restricted on $V$.
Arguing as in part (iii), if $\eta_{\phi}(d)=1$ then 
$
\rk_{\varphi}(E^{-d}/\Q) \;=\; \dim V - \textup{rank}\bigl(\widetilde{\mathbf{A}}_{d}\bigr).$ If $\eta_{\phi}(d)=0$ then $j(V)=j(V_0)$, and the image of $\mathbf{M}$ on $j(V)$ is the column space of $\mathbf{1} + \mathbf{A}_{d} := \overline{\mathbf{A}_{d}}.$
Writing the $i$th column of $\mathbf{A}_{d}$ as $\mathbf{a}_i\in\F_2^{\,r}$ and $\mathbf{1}\in\F_2^{\,r}$ for the all-ones vector, then
$
\mathbf{A}_{d}(e_{\infty}+e_i)\;=\;\mathbf{1}+\mathbf{a}_i \quad (1\le i\le r),
$
so the image on $j(V)$ is the column space of $\mathbf{A}_{d}+\mathbf{1}$. To conclude, \[
\rk_{\varphi}(E^{-d}/\Q)
=\dim_{\F_2}V-
\begin{cases}
\textup{rank}\bigl(\widetilde{\mathbf{A}_{d}}\bigr), & \text{if } \eta_{\varphi}(d)=1,\\[4pt]
\textup{rank}\bigl(\overline{\mathbf{A}_{d}}\bigr), & \text{if } \eta_{\varphi}(d)=0.
\end{cases} 
\]
This completes the proof in all 4 cases. \qedhere
\end{proof}
\begin{remark} \label{remark:-1_in_truncated_Selmer}
We conclude with a brief remark on the invariant $\eta_{\varphi}(d)$ from Theorem~\ref{thm:master2}.  
To find its value, one checks whether there exists a squarefree integer
\[
\alpha=\pm\!\!\prod_{q\in T}q,\qquad T\subseteq\mathcal{S}_{d},
\]
with the sign chosen according to the parity of $|T|$, satisfying the local conditions at all $p\nmid d\infty$. We note that verifying this condition is straightforward in practice. In particular, it is usually enough to test whether $-1\in\Sel_{\mathcal{S}_{d}}^{\varphi}(E/\Q)$, since this already implies $\eta_{\varphi}(d)=1$.  
At odd places of good reduction the local subgroups are unramified, so the condition is automatic for $-1$;  
if $p\equiv1\tmod4$ then $-1\in\Q_{p}^{\times2}$,  
and at primes of split multiplicative reduction of type~$I_{n}$ with $n$ odd one has $\mathcal{L}_{\varphi,p}=H^{1}(\Q_{p},E[\varphi])$ (see the proof of~\cite[Lemma~3.4]{SW12}),  
so again no restriction arises from such places.
\end{remark}

\subsection{Variations in $\hat{\varphi}$–Selmer ranks}
We now turn to the effect of quadratic twisting on $\hat{\varphi}$–Selmer ranks.  
By Theorem~\ref{thm:master2}, the behaviour of the $\varphi$–Selmer ranks under twisting is already completely explicit.  
An analogous description for the dual isogeny follows by arguing as in Corollary~\ref{cor:master-Eprime}, now expressed in terms of the auxiliary invariant $\eta_{\hat{\varphi}}(d)$.  
This formulation, however, is less convenient, since one typically seeks a unified relation governing both $\varphi$– and $\hat{\varphi}$–Selmer ranks.  
To obtain this, we appeal to Cassels’ formula to express both $\delta_{\varphi}(d)$ and $\delta_{\hat{\varphi}}(d)$ solely in terms of $\Sel_{\mathcal{S}_d}^{\varphi}(E/\Q)$.

\begin{theorem}[Cassels, {\cite{Cassels1965}}] \label{thm:Cassels}
For an isogeny $\varphi:E\to E'$ defined over $\mathbb{Q}$, one has
\[
\mathcal{T}(E/E') \;=\;
\prod_{v} \frac{\#\,\mathcal{L}_{\varphi,v}(E)}{2}.
\]
where $
\mathcal{T}(E/E') \;=\; 
\frac{\#\,\Sel^{\varphi}(E/\mathbb{Q})}{\#\,\Sel^{\hat{\varphi}}(E'/\mathbb{Q})}$, and the product runs over all places $v$ of $\mathbb{Q}$.
\end{theorem}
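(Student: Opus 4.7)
The strategy I would pursue is to derive Cassels' formula from global Tate–Poitou duality, in the form of the Greenberg–Wiles formula for a dual pair of Selmer structures. The first step is to identify $E[\varphi]$ and $E'[\hat{\varphi}]$ with $\mu_{2}$ via the map $(0,0)\mapsto -1$ used throughout the paper. Under the Weil pairing $E[\varphi]\times E'[\hat{\varphi}]\to \mu_{2}$, these two modules are Cartier dual, which places $\Sel^{\varphi}(E/\mathbb{Q})$ and $\Sel^{\hat{\varphi}}(E'/\mathbb{Q})$ into a natural duality framework.

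The key local input is that for every place $v$, the Kummer images $\mathcal{L}_{\varphi,v}(E)\subseteq H^{1}(\mathbb{Q}_{v},E[\varphi])$ and $\mathcal{L}_{\hat{\varphi},v}(E')\subseteq H^{1}(\mathbb{Q}_{v},E'[\hat{\varphi}])$ are exact annihilators of one another under the local Tate pairing
\[
H^{1}(\mathbb{Q}_{v},E[\varphi])\times H^{1}(\mathbb{Q}_{v},E'[\hat{\varphi}])\longrightarrow H^{2}(\mathbb{Q}_{v},\mu_{2})\cong \mathbb{F}_{2}.
\]
This orthogonality is the standard fact for connecting maps attached to dual isogenies; it can be verified by chasing the diagram relating the short exact sequences $0\to E[\varphi]\to E \xrightarrow{\varphi} E'\to 0$ and $0\to E'[\hat{\varphi}]\to E' \xrightarrow{\hat{\varphi}} E\to 0$, combined with Tate local duality.

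With these inputs in place, I would invoke the Greenberg–Wiles formula for dual Selmer groups, which gives
\[
\frac{\#\Sel^{\varphi}(E/\mathbb{Q})}{\#\Sel^{\hat{\varphi}}(E'/\mathbb{Q})} \;=\; \frac{\#H^{0}(\mathbb{Q},E[\varphi])}{\#H^{0}(\mathbb{Q},E'[\hat{\varphi}])}\;\prod_{v}\frac{\#\mathcal{L}_{\varphi,v}(E)}{\#H^{0}(\mathbb{Q}_{v},E[\varphi])}.
\]
Since $(0,0)\in E(\mathbb{Q})$ and $(0,0)\in E'(\mathbb{Q})$, both $H^{0}(\mathbb{Q},E[\varphi])$ and $H^{0}(\mathbb{Q},E'[\hat{\varphi}])$ are of order $2$, so the global factor contributes trivially. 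Similarly, $H^{0}(\mathbb{Q}_{v},E[\varphi])=E[\varphi](\mathbb{Q}_{v})=\mu_{2}(\mathbb{Q}_{v})$ has order $2$ at every place. Substituting these values recovers exactly the claimed formula.

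The main obstacle is the local annihilator statement at the primes of bad reduction and at the archimedean place, where the Kummer images admit no uniform description. A self–contained alternative, closer to Cassels' original argument, would instead apply the snake lemma to the Selmer sequences induced by $\varphi$ and $\hat{\varphi}$ in parallel; this avoids an explicit invocation of Greenberg–Wiles but ultimately reduces to the same local duality computation.
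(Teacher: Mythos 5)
Your proposal is correct, but it takes a different route from the paper: the paper's proof is a one-line citation, deferring entirely to Theorem~1.1 and equations (1.22), (3.4) of Cassels' original article, whereas you rederive the formula from the Greenberg--Wiles form of Poitou--Tate duality. Your derivation is the standard modern re-proof and it does check out: $E[\varphi]$ and $E'[\hat{\varphi}]$ are Cartier dual via the pairing induced by the Weil pairing, the Kummer images $\mathcal{L}_{\varphi,v}(E)$ and $\mathcal{L}_{\hat{\varphi},v}(E')$ are exact annihilators under local Tate duality (this is the one substantive input you assert rather than prove, but it is genuinely standard -- see e.g.\ Milne, \emph{Arithmetic Duality Theorems}, I.3.4, or Schaefer's work on class groups and Selmer groups -- and your proposed diagram chase between the two isogeny sequences is the right way to verify it), and since $(0,0)$ is rational on both $E$ and $E'$ every $H^{0}$ term, global and local, has order $2$, so the Greenberg--Wiles quotient collapses to exactly $\prod_{v}\#\mathcal{L}_{\varphi,v}(E)/2$. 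One should note that your argument, like the statement itself with the explicit denominator $2$, is specific to the degree-$2$ isogeny with rational kernel fixed throughout the paper (for a general isogeny the denominator would be $\#H^{0}(\mathbb{Q}_{v},E[\varphi])$), but that is the only case ever used. What the citation buys the paper is brevity and historical attribution; what your approach buys is a self-contained proof whose only black boxes are the Poitou--Tate sequence and the local orthogonality of dual Kummer images, and which makes transparent why all but finitely many local factors equal $1$ (at good odd places $\mathcal{L}_{\varphi,v}$ is the unramified subgroup, of order $2$).
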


\begin{proof}
This follows by combining Theorem~1.1 with equations (1.22) and (3.4) in \cite{Cassels1965}.
\end{proof}

\begin{lemma}\label{thm:Cassels_twist}
For $d\in\mathcal{D}_{E/\mathbb{Q}}$, one has
\[
\mathcal{T}(E^{-d}/E'^{-d})
\;=\;
\mathcal{T}(E/E')\,
\biggl(
\frac{\#\,\mathcal{L}_{\varphi,q}(E^{-d})}
     {\#\,\mathcal{L}_{\varphi,q}(E)}
\biggr)^{\!\omega(d)}
\,
\frac{\#\,\mathcal{L}_{\varphi,\infty}(E^{-d})}
     {\#\,\mathcal{L}_{\varphi,\infty}(E)},
\]
where $q$ is any prime factor of $d$.
\end{lemma}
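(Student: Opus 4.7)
The plan is to invoke Cassels' formula (Theorem~\ref{thm:Cassels}) twice --- once for the isogeny $\varphi:E\to E'$ and once for its quadratic twist $\varphi:E^{-d}\to E'^{-d}$ --- and then take the ratio of the resulting identities. Explicitly, since each of the two Cassels products has a factor of $1/2$ at every place, these cancel pairwise in the quotient, and one obtains
\[
\frac{\mathcal{T}(E^{-d}/E'^{-d})}{\mathcal{T}(E/E')}
\;=\;
\prod_{v}\frac{\#\mathcal{L}_{\varphi,v}(E^{-d})}{\#\mathcal{L}_{\varphi,v}(E)}.
\]
The main task is to show that all but finitely many factors on the right are trivial, and that those that survive collapse into the claimed shape.

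Next, I would appeal to Lemma~\ref{eq_local_conditions}, which gives the equality
$\mathcal{L}_{\varphi,v}(E)=\mathcal{L}_{\varphi,v}(E^{-d})$ inside $H^{1}(\mathbb{Q}_{v},E[\varphi])$ at every $v\mid 2N_{E}$ and at every finite prime $v\nmid 2dN_{E}$. All such factors in the above product are therefore equal to~$1$ and drop out, leaving only the contributions from $v\mid d$ and $v=\infty$:
\[
\frac{\mathcal{T}(E^{-d}/E'^{-d})}{\mathcal{T}(E/E')}
\;=\;
\prod_{q\mid d}\frac{\#\mathcal{L}_{\varphi,q}(E^{-d})}{\#\mathcal{L}_{\varphi,q}(E)}
\;\cdot\;
\frac{\#\mathcal{L}_{\varphi,\infty}(E^{-d})}{\#\mathcal{L}_{\varphi,\infty}(E)}.
\]

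The final step is to verify that the factor indexed by $q\mid d$ does not depend on the particular prime $q$. Since each such $q$ lies in $\mathcal{Q}_{E/\mathbb{Q}}$, one has $q\nmid 2N_{E}$ and so $E/\mathbb{Q}_{q}$ has good reduction; consequently $\mathcal{L}_{\varphi,q}(E)=H^{1}_{\mathrm{ur}}(\mathbb{Q}_{q},E[\varphi])$ has cardinality $2$, independently of~$q$. On the other hand, Lemma~\ref{loc_conditions_at_q} shows that $\#\mathcal{L}_{\varphi,q}(E^{-d})$ is determined exclusively by the signs of $b$ and~$\Delta_{E}$, hence is also constant across the primes $q\mid d$. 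The product over $q\mid d$ therefore collapses to an $\omega(d)$-th power of the common ratio, and picking any prime factor $q$ of $d$ yields exactly the formula in the statement.

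No serious obstacle is expected here; the argument is essentially book-keeping, combining the multiplicativity in Cassels' formula with the local invariance results already established in Lemmas~\ref{eq_local_conditions} and~\ref{loc_conditions_at_q}. The only point that merits a line of justification is the independence of the local factor on $q\mid d$, which follows because $E$ has good reduction at every $q\in\mathcal{Q}_{E/\mathbb{Q}}$ and the sizes in Lemma~\ref{loc_conditions_at_q} depend only on the fixed invariants $(\mathrm{sgn}(b),\mathrm{sgn}(\Delta_{E}))$ of $E/\mathbb{Q}$.
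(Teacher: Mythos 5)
Your proposal is correct and follows essentially the same route as the paper: both apply Cassels' formula, cancel the local factors at places $v\nmid d\infty$ using Lemma~\ref{eq_local_conditions}, and collapse the product over $q\mid d$ by noting that $\#\mathcal{L}_{\varphi,q}(E)=2$ (good reduction) while $\#\mathcal{L}_{\varphi,q}(E^{-d})$ depends only on the signs of $b$ and $\Delta_E$ by Lemma~\ref{loc_conditions_at_q}. The only cosmetic difference is that you form the ratio of two instances of Cassels' formula, whereas the paper expands $\mathcal{T}(E^{-d}/E'^{-d})$ directly and substitutes; the content is identical.
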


\begin{proof}
We compute directly:
\begin{align*}
\mathcal{T}(E^{-d}/E'^{-d})
=\;
&\Biggl(
\prod_{v\nmid d\infty}
\frac{\#\,\mathcal{L}_{\varphi,v}(E^{-d})}{2}
\Biggr)
\Biggl(
\prod_{q\mid d}
\frac{\#\,\mathcal{L}_{\varphi,q}(E^{-d})}{2}
\Biggr)
\Biggl(
\frac{\#\,\mathcal{L}_{\varphi,\infty}(E^{-d})}{2}
\Biggr) \\
\stackrel{\textup{Lem.~\ref{eq_local_conditions}}}{=}\;
&\Biggl(
\prod_{v\nmid d\infty}
\frac{\#\,\mathcal{L}_{\varphi,v}(E)}{2}
\Biggr)
\Biggl(
\prod_{q\mid d}
\frac{\#\,\mathcal{L}_{\varphi,q}(E^{-d})}{2}
\Biggr)
\Biggl(
\frac{\#\,\mathcal{L}_{\varphi,\infty}(E^{-d})}{2}
\Biggr) \\
=\;
&\mathcal{T}(E/E')
\Biggl(
\prod_{q\mid d}
\frac{\#\,\mathcal{L}_{\varphi,q}(E^{-d})}
     {\#\,\mathcal{L}_{\varphi,q}(E)}
\Biggr)
\Biggl(
\frac{\#\,\mathcal{L}_{\varphi,\infty}(E^{-d})}
     {\#\,\mathcal{L}_{\varphi,\infty}(E)}
\Biggr)\\
=&\;
\mathcal{T}(E/E')
\Biggl(
\frac{\#\,\mathcal{L}_{\varphi,q}(E^{-d})}
     {2}
\Biggr)^{\!\omega(d)}
\Biggl(
\frac{\#\,\mathcal{L}_{\varphi,\infty}(E^{-d})}
     {\#\,\mathcal{L}_{\varphi,\infty}(E)}
\Biggr).
\end{align*}
To obtain the last equality, we combine two observations:  
first, by Lemma~\ref{loc_conditions_at_q}, the quantity 
$\#\,\mathcal{L}_{\varphi,q}(E^{-d})$ depends only on the signs of $\Delta_{E}$ and~$b$, 
and not the prime $q$ dividing $d$;  
second, since $E/\Q$ has good reduction at~$q \mid d$, then 
$\mathcal{L}_{\varphi,q}(E)= H^{1}_{\textup{ur}}(\mathbb{Q}_{q},E[\phi])$ has order~$2$ for all \hbox{such $q$.}
\end{proof}
\begin{corollary} \label{cor:cassels}
Let $d\in\mathcal{D}_{E/\mathbb{Q}}$. Then
\[
\operatorname{rk}_{\hat{\varphi}}(E'^{-d}/\mathbb{Q})
-\operatorname{rk}_{\hat{\varphi}}(E'/\mathbb{Q})
=\;
\operatorname{rk}_{{\varphi}}(E^{-d}/\mathbb{Q})
-\operatorname{rk}_{{\varphi}}(E/\mathbb{Q})
+\omega(d)\,\theta{q}
+\theta_{\infty}.
\]
where
\[
\theta_{q}=
\begin{cases}
+1, & b<0,\\
-1, & b>0,\ \Delta_{E}<0,\\
0, & b>0,\ \Delta_{E}>0,\ a<0,\\
0, & b>0,\ \Delta_{E}>0,\ a>0.
\end{cases}
\qquad
\theta_{\infty}=
\begin{cases}
0, & b<0,\\
0, & b>0,\ \Delta_{E}<0,\\
+1, & b>0,\ \Delta_{E}>0,\ a<0,\\
-1, & b>0,\ \Delta_{E}>0,\ a>0.
\end{cases}
\]

\end{corollary}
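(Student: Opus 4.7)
The plan is to derive the corollary directly from Cassels' formula together with the local calculations already in place. Applying Theorem~\ref{thm:Cassels} to $\varphi\colon E\to E'$ and to $\varphi\colon E^{-d}\to E'^{-d}$ and taking $\log_2$ converts the multiplicative statement into a relation between $\F_2$-dimensions:
\[
\log_2\mathcal{T}(E/E')=\mathrm{rk}_{\varphi}(E/\Q)-\mathrm{rk}_{\hat{\varphi}}(E'/\Q),
\]
and the analogous identity for the twist. Subtracting these gives
\[
\delta_{\varphi}(d)-\delta_{\hat{\varphi}}(d)
\;=\;\log_2\!\bigl(\mathcal{T}(E^{-d}/E'^{-d})\big/\mathcal{T}(E/E')\bigr).
\]

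Next I substitute the expression for the right-hand side supplied by Lemma~\ref{thm:Cassels_twist}, which already isolates all the variation into the local factors at $q\mid d$ and at $\infty$. Rearranging yields
\[
\delta_{\hat{\varphi}}(d)-\delta_{\varphi}(d)
\;=\;-\omega(d)\,\log_2\frac{\#\mathcal{L}_{\varphi,q}(E^{-d})}{\#\mathcal{L}_{\varphi,q}(E)}
-\log_2\frac{\#\mathcal{L}_{\varphi,\infty}(E^{-d})}{\#\mathcal{L}_{\varphi,\infty}(E)},
\]
so it suffices to identify these two log-ratios with $\theta_q$ and $\theta_{\infty}$ in each sign regime.

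For the finite factor I use that $E/\Q$ has good reduction at each $q\mid d$, so $\mathcal{L}_{\varphi,q}(E)=H^{1}_{\mathrm{ur}}(\Q_q,E[\varphi])$ has order~$2$. Lemma~\ref{loc_conditions_at_q} then records $\#\mathcal{L}_{\varphi,q}(E^{-d})\in\{1,4,2\}$ according as $b<0$, $(b>0,\Delta_E<0)$, or $(b>0,\Delta_E>0)$, which after negating the $\log_2$ gives exactly the tabulated values of $\theta_q$. For the archimedean factor, comparing Lemma~\ref{lem:infinity} with Corollary~\ref{cor:infinity_under_twist} in each of the four sign cases yields the orders of $\mathcal{L}_{\varphi,\infty}(E)$ and $\mathcal{L}_{\varphi,\infty}(E^{-d})$, and the corresponding log-ratios match the stated $\theta_{\infty}\in\{0,0,+1,-1\}$.

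There is no real obstacle beyond careful bookkeeping; the only subtle point is that Lemma~\ref{thm:Cassels_twist} legitimately collects the $q$-contributions into a single factor raised to the power $\omega(d)$, and this rests on the fact, built into Lemma~\ref{loc_conditions_at_q}, that $\#\mathcal{L}_{\varphi,q}(E^{-d})$ depends only on the signs of $b$ and $\Delta_E$, not on the specific prime $q\mid d$. With this observation the corollary reduces to checking the four sign configurations against the table.
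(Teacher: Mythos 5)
Your proposal is correct and follows essentially the same route as the paper: take $\log_2$ of Cassels' formula, invoke Lemma~\ref{thm:Cassels_twist} to isolate the local factors at $q\mid d$ and at $\infty$, and then read off $\theta_q=\log_2\bigl(2/\#\mathcal{L}_{\varphi,q}(E^{-d})\bigr)$ and $\theta_\infty=\log_2\bigl(\#\mathcal{L}_{\varphi,\infty}(E)/\#\mathcal{L}_{\varphi,\infty}(E^{-d})\bigr)$ from Lemma~\ref{loc_conditions_at_q}, Lemma~\ref{lem:infinity} and Corollary~\ref{cor:infinity_under_twist}. The sign bookkeeping and the case-by-case values all check out against the paper's table.
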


\begin{proof}
In view of Lemma \ref{thm:Cassels_twist}, we see that
\[\theta_{q}:=\log_{2}\!\frac{2}{\#\,\mathcal{L}_{\varphi,q}(E^{-d})}, \quad
\theta_{\infty}:=\log_{2}\!\frac{\#\,\mathcal{L}_{\varphi,\infty}(E)}{\#\,\mathcal{L}_{\varphi,\infty}(E^{-d})}.\]
The claimed formula follows from Lemma~\ref{loc_conditions_at_q}, Lemma~\ref{lem:infinity}
and Corollary~\ref{cor:infinity_under_twist}.
\end{proof}

\begin{corollary}\label{cor:hatphi-variation-explicit}
Let $d\in\mathcal{D}_{E/\Q}$. 
Then, one has:
\begin{enumerate}
\item[(i)] If $b<0$ (and hence $\Delta_E>0$), then
\[
\operatorname{rk}_{\hat{\varphi}}(E'^{-d}/\Q)
=\operatorname{rk}_{\hat{\varphi}}(E'/\Q)\;+\;\omega(d).
\]

\medskip

\item[(ii)] If $\Delta_E<0$ and $b>0$, then
\[
\operatorname{rk}_{\hat{\varphi}}(E'^{-d}/\Q)
=\operatorname{rk}_{\hat{\varphi}}(E'/\Q).
\]

\medskip

\item[(iii)] If $\Delta_E>0$, $b>0$, and $a<0$, then
\[
\operatorname{rk}_{\hat{\varphi}}(E'^{-d}/\Q)
=\operatorname{rk}_{\hat{\varphi}}(E'/\Q)\;+\;\omega(d)\;-\;
\begin{cases}
\operatorname{rank}_{\F_{2}}\!\bigl(\mathbf{A}_{d}\bigr),
& \text{if } \eta_{\phi}(d)=1,\\[4pt]
-1+\operatorname{rank}_{\F_{2}}\!\bigl(\widehat{\mathbf{A}_{d}}\bigr),
& \text{if } \eta_{\phi}(d)=0.
\end{cases}
\]

\medskip

\item[(iv)] If $\Delta_E>0$, $b>0$, and $a>0$, then
\[
\operatorname{rk}_{\hat{\varphi}}(E'^{-d}/\Q)
=\operatorname{rk}_{\hat{\varphi}}(E'/\Q)\;+\; \omega(d)\; -\; 
\begin{cases}
\operatorname{rank}_{\F_{2}}\!\bigl(\widetilde{\mathbf{A}_d}\bigr),
& \text{if } \eta_{\phi}(d)=1,\\[4pt]
1\;+\;\operatorname{rank}_{\F_{2}}\!\bigl(\overline{\mathbf{A}_{d}}\bigr),
& \text{if } \eta_{\phi}(d)=0.
\end{cases}
\]
\end{enumerate}
\end{corollary}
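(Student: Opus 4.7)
The plan is to deduce the corollary by combining Theorem~\ref{thm:master2}, which gives explicit formulae for $\delta_{\varphi}(d) := \operatorname{rk}_{\varphi}(E^{-d}/\Q) - \operatorname{rk}_{\varphi}(E/\Q)$, with the Cassels-type identity of Corollary~\ref{cor:cassels}, which relates the $\varphi$- and $\hat{\varphi}$-Selmer rank differences via
\[
\delta_{\hat{\varphi}}(d) \;=\; \delta_{\varphi}(d) \;+\; \omega(d)\,\theta_{q} \;+\; \theta_{\infty},
\]
where $(\theta_{q}, \theta_{\infty})$ depends only on the sign configuration of $(b, \Delta_E, a)$. Because both inputs are already explicit in each of the four sign regimes appearing in the two results, the corollary will follow by direct substitution.

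First, I would fix one of the four sign cases and record the corresponding values of $(\theta_{q}, \theta_{\infty})$ from Corollary~\ref{cor:cassels}: namely $(1,0)$ in case (i), $(-1,0)$ in case (ii), $(0,1)$ in case (iii), and $(0,-1)$ in case (iv). Next, I would plug in the formula for $\delta_{\varphi}(d)$ from the matching case of Theorem~\ref{thm:master2} and simplify. In the first two cases the substitution is immediate, since $\delta_{\varphi}(d)$ is unconditional: case (i) gives $0 + \omega(d)\cdot 1 + 0 = \omega(d)$, and case (ii) gives $\omega(d) - \omega(d) + 0 = 0$, matching the statement.

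In cases (iii) and (iv) the formula for $\delta_{\varphi}(d)$ branches on $\eta_{\varphi}(d)\in\{0,1\}$, so the substitution is carried out in each subcase separately. For example, in case (iii) with $\eta_{\varphi}(d)=1$ one has $\delta_{\varphi}(d) = \omega(d) - 1 - \operatorname{rank}_{\F_{2}}(\mathbf{A}_{d})$ and the correction is $+1$, giving $\omega(d) - \operatorname{rank}_{\F_{2}}(\mathbf{A}_{d})$; with $\eta_{\varphi}(d)=0$, $\delta_{\varphi}(d) = \omega(d) - \operatorname{rank}_{\F_{2}}(\widehat{\mathbf{A}_{d}})$ and again adding $+1$ yields the claim. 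The two subcases in (iv) are handled analogously, with the correction $\theta_{\infty}=-1$ accounting for the $\pm 1$ shifts between $\delta_{\varphi}$ and $\delta_{\hat{\varphi}}$ in the table.

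No new ingredients are required beyond Theorem~\ref{thm:master2} and Corollary~\ref{cor:cassels}; the only thing to watch is bookkeeping across four sign regimes and two $\eta_{\varphi}(d)$-subcases, verifying that the contribution of $\omega(d)\,\theta_{q}$ always cancels correctly with the $\omega(d)$-term of $\delta_{\varphi}(d)$ and that the integer shifts $\pm 1$ arising from $\theta_{\infty}$ combine with the constants $\pm 1$ present in the formulae of Theorem~\ref{thm:master2} to produce the stated closed form. This last point is the only place where a mechanical sign error could creep in, and it is the main reason to lay out all eight substitutions explicitly.
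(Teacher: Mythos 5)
Your proposal is correct and is exactly the paper's argument: the paper's proof of Corollary~\ref{cor:hatphi-variation-explicit} is precisely ``combine Theorem~\ref{thm:master2} with Corollary~\ref{cor:cassels},'' and your case-by-case substitutions (including the $\pm1$ shifts from $\theta_{\infty}$ in cases (iii)--(iv)) all check out against the stated formulae.
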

\begin{proof}
Combine Theorem~\ref{thm:master2} with Corollary \ref{cor:cassels}.
\end{proof}

\section{Compatibility with the parity conjecture}

We now relate show that the explicit formulae for $\delta_{\varphi}(d)$ and $\delta_{\hat{\varphi}}(d)$ are compatible with predictions of the parity conjecture.  
Recall that
\[
\delta_{\varphi}(d)
 = \mathrm{rk}_{\varphi}(E^{-d}/\mathbb{Q})
   - \mathrm{rk}_{\varphi}(E/\mathbb{Q}), \qquad
\delta_{\hat{\varphi}}(d)
 = \mathrm{rk}_{\hat{\varphi}}(E^{\prime -d}/\mathbb{Q})
   - \mathrm{rk}_{\hat{\varphi}}(E'/\mathbb{Q}).
\]

The $p$-parity conjecture asserts that the parity of the $p^{\infty}$-Selmer rank $\mathrm{rk}_{p^{\infty}}(E/\mathbb{Q})$ is determined by the global root number $w(E/\mathbb{Q})$.  
If $K=\mathbb{Q}(\sqrt{-d})$ satisfies the Heegner hypothesis for $E$, then
$
w(E^{-d}/\mathbb{Q})=-w(E/\mathbb{Q}),
$
and therefore
\begin{equation} \label{eqforpinfty}
\mathrm{rk}_{p^{\infty}}(E^{-d}/\mathbb{Q})
-
\mathrm{rk}_{p^{\infty}}(E/\mathbb{Q}) \equiv 1
\mod{2}.
\end{equation}
Hence the $p$-parity conjecture predicts that twisting by such a field reverses the parity of the $p^{\infty}$-Selmer rank. On the other hand, the explicit expressions for $\delta_{\varphi}(d)$ and $\delta_{\hat{\varphi}}(d)$ satisfy the same parity relation at the level of the $\varphi$- and $\hat{\varphi}$-Selmer groups.  
In particular, combining Theorem~\ref{thm:master2} and Corollary~\ref{cor:hatphi-variation-explicit} yields
\begin{equation} \label{eq with deltas}
\delta_{\varphi}(d) + \delta_{\hat{\varphi}}(d) \equiv 1 \mod{2},
\end{equation}
independently of the matrix ranks appearing in these formulae.  
In what follows, we show that for \emph{any} square-free integer $d$ (not necessarily in $\mathcal{D}_{E/\mathbb{Q}}$), we have:
\[
\mathrm{rk}_{2^{\infty}}(E^{-d}/\mathbb{Q}) - \mathrm{rk}_{2^{\infty}}(E/\mathbb{Q}) \equiv \delta_{\varphi}(d) + \delta_{\hat{\varphi}}(d) \mod 2.
\]
As a consequence, the relation~\eqref{eq with deltas} obtained from the explicit formulae for $\delta_{\varphi}(d)$ and $\delta_{\hat{\varphi}}(d)$ is, in this setting, {equivalent} to the assertion made by the $2$-parity conjecture for $E/K$ give in \eqref{eqforpinfty}. We begin by reformulating the $2$-parity conjecture in terms of the $2$-Selmer ranks of $E$ and its quadratic twists.

\begin{lemma} \label{lemma:parity_of_selmer}
Let $E/\mathbb{Q}$ be an elliptic curve with $E(\mathbb{Q})[2] \cong \mathbb{Z}/2\mathbb{Z}$.  
Then
\[
\mathrm{rk}_{2^{\infty}}(E/\mathbb{Q}) \equiv \mathrm{rk}_{2}(E/\mathbb{Q}) + 1 \mod{2}.
\]
\end{lemma}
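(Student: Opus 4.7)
The plan is to relate the two Selmer quantities via the usual Kummer-type short exact sequence and then use structural properties of $\Sha$ coming from the Cassels--Tate pairing to control the discrepancy modulo $2$.

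First, I would invoke the short exact sequence
\[
0 \to E(\mathbb{Q})/2E(\mathbb{Q}) \to \Sel^{2}(E/\mathbb{Q}) \to \Sha(E/\mathbb{Q})[2] \to 0.
\]
Writing $r = \mathrm{rk}(E/\mathbb{Q})$ for the Mordell--Weil rank, the hypothesis $E(\mathbb{Q})[2]\cong\mathbb{Z}/2\mathbb{Z}$ gives $\dim_{\mathbb{F}_{2}} E(\mathbb{Q})/2E(\mathbb{Q}) = r+1$. Hence
\[
\mathrm{rk}_{2}(E/\mathbb{Q}) \;=\; r + 1 + \dim_{\mathbb{F}_{2}}\Sha(E/\mathbb{Q})[2].
\]

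Next, I would decompose $\Sha(E/\mathbb{Q})[2^{\infty}]\cong(\mathbb{Q}_{2}/\mathbb{Z}_{2})^{t}\oplus F$, with $F$ a finite abelian $2$-group. By definition of the $2^{\infty}$-Selmer corank one has $\mathrm{rk}_{2^{\infty}}(E/\mathbb{Q}) = r + t$, while the $2$-torsion satisfies $\dim_{\mathbb{F}_{2}}\Sha(E/\mathbb{Q})[2] = t + \dim_{\mathbb{F}_{2}} F[2]$. Substituting yields
\[
\mathrm{rk}_{2}(E/\mathbb{Q}) - \mathrm{rk}_{2^{\infty}}(E/\mathbb{Q}) \;=\; 1 + \dim_{\mathbb{F}_{2}} F[2],
\]
so the claim reduces to showing that $\dim_{\mathbb{F}_{2}} F[2]$ is even.

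The main point, and the step where I expect the subtlety to lie, is this parity statement for $F[2]$. For this I would appeal to the Cassels--Tate pairing on $\Sha(E/\mathbb{Q})$, which for elliptic curves is alternating and non-degenerate on the quotient by the maximal divisible subgroup. Restricting to the $2$-primary part identifies $F$ with a finite abelian $2$-group carrying a non-degenerate alternating pairing, which forces a symplectic decomposition $F\cong G\oplus G$ for some finite abelian $2$-group $G$. In particular $\dim_{\mathbb{F}_{2}} F[2] = 2\dim_{\mathbb{F}_{2}} G[2]$ is even, and the desired congruence
\[
\mathrm{rk}_{2^{\infty}}(E/\mathbb{Q}) \equiv \mathrm{rk}_{2}(E/\mathbb{Q}) + 1 \mod 2
\]
follows. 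The only delicate point worth flagging is that \emph{alternating} (rather than merely skew-symmetric) is essential at $p=2$; for elliptic curves over $\mathbb{Q}$ this is classical (due to Cassels), so no additional hypothesis is required.
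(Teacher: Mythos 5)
Your proposal is correct and follows essentially the same route as the paper: both decompose $\Sha(E/\mathbb{Q})[2^{\infty}]$ into a divisible part of corank $\delta_{2}$ plus a finite $2$-group, use $\mathrm{rk}_{2}(E/\mathbb{Q}) = 1 + \mathrm{rk}(E/\mathbb{Q}) + \dim_{\mathbb{F}_{2}}\Sha(E/\mathbb{Q})[2]$, and invoke the alternating non-degenerate Cassels pairing (citing Cassels for the alternating property at $p=2$) to show the finite part contributes an even number of $\mathbb{F}_{2}$-dimensions. The only cosmetic difference is that you deduce evenness via a symplectic decomposition $F\cong G\oplus G$, whereas the paper concludes directly from non-degeneracy of the alternating pairing on $T[2]$; both are standard and equivalent.
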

\begin{proof}
Let $\delta_2(E/\mathbb{Q})$ denote the $\mathbb{Z}_2$-corank of $\Sha(E/\mathbb{Q})[2^{\infty}]$.  
Then
\[
\Sha(E/\mathbb{Q})[2^{\infty}]
 \simeq (\mathbb{Q}_p/\mathbb{Z}_p)^{\delta_2(E/\mathbb{Q})} \oplus T,
\]
where $T$ is a finite abelian $2$-group.  
It follows that
\begin{align*}
\mathrm{rk}_{2}(E/\mathbb{Q})
 &= 1 + \mathrm{rk}(E/\mathbb{Q})
    + \dim_{\mathbb{F}_{p}}\Sha(E)[2] \\
 &= 1 + \mathrm{rk}(E/\mathbb{Q})
    + \delta_2(E/\mathbb{Q})
    + \dim_{\mathbb{F}_{2}}T[2] \\
 &\equiv 1 + \mathrm{rk}(E/\mathbb{Q})
    + \delta_2(E/\mathbb{Q}) \mod{2} \\
 &= 1 + \mathrm{rk}_{2^{\infty}}(E/\mathbb{Q}) \mod{2}.
\end{align*}
The second to last equality follows from \cite{Cas62} since the Cassels pairing restricts to an alternating and non-degenerate pairing on $T[2]$, implying $\dim_{\mathbb{F}_{2}}T[2]\equiv0\tmod{2}$. For the last equality, by definition, one has $\textup{rk}_{2^{\infty}}(E/\mathbb{Q}) = \textup{rk}(E/\mathbb{Q})+\delta_2(E/\mathbb{Q}).$
\end{proof}

\begin{theorem}[\!\!\cite{DD10}] \label{thm:parity}
Let $E/\Q$ be an elliptic curve and let $K=\Q(\sqrt{-d})$ be an imaginary quadratic field in which all primes of bad reduction for $E/\Q$ split.  
Then
\[
\mathrm{rk}_{2}(E^{-d}/\Q)
\equiv
\mathrm{rk}_{2}(E/\Q) + 1
\tmod{2}.
\]
\end{theorem}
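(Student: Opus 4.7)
The plan is to reduce the statement to the $2$-parity conjecture over $\mathbb{Q}$, which is already invoked in the excerpt as \cite[Thm.~1.4]{DD10}, by means of a classical root-number computation under the Heegner hypothesis, and then convert the resulting parity identity for $2^{\infty}$-Selmer ranks into one for $2$-Selmer ranks via Lemma~\ref{lemma:parity_of_selmer}. Explicitly, applying the $2$-parity conjecture separately to $E/\mathbb{Q}$ and $E^{-d}/\mathbb{Q}$ yields
\[
(-1)^{\mathrm{rk}_{2^{\infty}}(E/\mathbb{Q})}=w(E/\mathbb{Q}), \qquad (-1)^{\mathrm{rk}_{2^{\infty}}(E^{-d}/\mathbb{Q})}=w(E^{-d}/\mathbb{Q}),
\]
and the factorisation $w(E/K)=w(E/\mathbb{Q})\cdot w(E^{-d}/\mathbb{Q})$ for $K=\mathbb{Q}(\sqrt{-d})$ (arising from $L(E/K,s)=L(E/\mathbb{Q},s)L(E^{-d}/\mathbb{Q},s)$) reduces the theorem to the classical identity $w(E/K)=-1$.

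Next I would verify $w(E/K)=-1$ by a place-by-place inspection of local root numbers over $K$. The unique archimedean place of $K$ is complex and contributes $-1$. At every finite prime $v$ of $\mathbb{Q}$ of good reduction for $E$, each place of $K$ above $v$ contributes $+1$. At every prime $v$ of bad reduction, the Heegner hypothesis forces $v$ to split in $K$, and the two places of $K$ above $v$ are each isomorphic to $\mathbb{Q}_v$; their combined contribution to $w(E/K)$ is $w_v(E/\mathbb{Q})^{2}=+1$. Multiplying yields $w(E/K)=-1$, as required.

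Finally, I would descend the resulting congruence $\mathrm{rk}_{2^{\infty}}(E^{-d}/\mathbb{Q})\equiv\mathrm{rk}_{2^{\infty}}(E/\mathbb{Q})+1\tmod{2}$ to $2$-Selmer ranks. For this, note that $E^{-d}(\mathbb{Q})[2]\cong\mathbb{Z}/2\mathbb{Z}$ because the quadratics $x^{2}+ax+b$ and $x^{2}+adx+bd^{2}$ have discriminants differing by the square $d^{2}$, so the rational $2$-torsion structures of $E$ and $E^{-d}$ coincide. Lemma~\ref{lemma:parity_of_selmer} thus applies to both curves, giving $\mathrm{rk}_{2^{\infty}}(\cdot)\equiv\mathrm{rk}_{2}(\cdot)+1\tmod{2}$, and subtracting the two congruences produces the desired identity. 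The only substantive input is the root-number calculation, which is classical and particularly transparent in this setting, since the bad primes of $E/\mathbb{Q}$ all split in $K$ by hypothesis and therefore contribute squares of local root numbers to the product $w(E/K)$.
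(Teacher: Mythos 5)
Your proof is correct, and it reaches the same congruence for $2^{\infty}$-Selmer ranks by a slightly different route before finishing, as the paper does, with Lemma~\ref{lemma:parity_of_selmer}. The paper applies the $2$-parity statement to $E/K$ itself: it computes $w(E/K)=-1$ (archimedean place contributes $-1$; split bad places contribute in pairs), deduces $\mathrm{rk}_{2^{\infty}}(E/K)\equiv 1 \tmod 2$ from \cite[Thm.~1.4]{DD10}, and then splits $\mathrm{rk}_{2^{\infty}}(E/K)=\mathrm{rk}_{2^{\infty}}(E/\mathbb{Q})+\mathrm{rk}_{2^{\infty}}(E^{-d}/\mathbb{Q})$ via the Weil restriction $\mathrm{Res}_{K/\mathbb{Q}}E_{K}\sim E\times E^{-d}$ together with Shapiro's lemma. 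You instead apply the $p$-parity theorem over $\mathbb{Q}$ twice, once to $E$ and once to $E^{-d}$, and multiply, using the factorisation $w(E/K)=w(E/\mathbb{Q})\,w(E^{-d}/\mathbb{Q})$ and the same local root-number computation. The two decompositions are mirror images of one another (the root-number factorisation is exactly the $L$-function counterpart of the Weil-restriction isogeny); your version avoids citing \cite{MRS07} and \cite{Mil86} and uses \cite[Thm.~1.4]{DD10} in the form in which it is actually stated (over $\mathbb{Q}$), at the cost of needing the standard identity $w(E/K)=w(E/\mathbb{Q})\,w(E^{-d}/\mathbb{Q})$, equivalently $w(E^{-d}/\mathbb{Q})=-w(E/\mathbb{Q})$. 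One point you handle more carefully than the paper: the final passage from $2^{\infty}$- to $2$-Selmer parities needs $E^{-d}(\mathbb{Q})[2]\cong E(\mathbb{Q})[2]\cong\mathbb{Z}/2\mathbb{Z}$ so that Lemma~\ref{lemma:parity_of_selmer} applies to both curves and the torsion contributions cancel; you justify this explicitly (the splitting fields of $x^{2}+ax+b$ and $x^{2}+adx+bd^{2}$ coincide), whereas the paper leaves it implicit. Note that this step, and hence the theorem as proved, silently uses the paper's standing hypothesis of partial $2$-torsion even though the statement is phrased for an arbitrary elliptic curve.
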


\begin{proof}
Since $E$ is defined over $\mathbb{Q}$, the only non-trivial local contribution to its global root number over~$K$ arises from the unique archimedean place of~$K$, hence $w(E/K)=-1$.  
By \cite[Thm.~1.4]{DD10}, it follows that $\mathrm{rk}_{2^{\infty}}(E/K)\equiv1\tmod{2}$, where $\mathrm{rk}_{2^{\infty}}$ denotes the $\mathbb{Z}_p$-corank of the $p^{\infty}$-Selmer group.  
Let $W=\mathrm{Res}_{K/\mathbb{Q}}E_{K}$ denote the Weil restriction of scalars to~$\mathbb{Q}$.  
By \cite[Thm.~4.5]{MRS07}, $W$ is isogenous over~$\mathbb{Q}$ to $E\times E^{-d}$, and therefore
$
\mathrm{rk}_{2^{\infty}}(W/\mathbb{Q})
 = \mathrm{rk}_{2^{\infty}}(E/\mathbb{Q})
   + \mathrm{rk}_{2^{\infty}}(E^{-d}/\mathbb{Q}).
$
Combining \cite[p.~178(a)]{Mil86} with Shapiro’s lemma yields
$\mathrm{rk}_{2^{\infty}}(W/\mathbb{Q})
 = \mathrm{rk}_{2^{\infty}}(E/K)$, hence
\begin{equation}\label{eq:p_infty}
\mathrm{rk}_{2^{\infty}}(E^{-d}/\mathbb{Q})
 \equiv \mathrm{rk}_{2^{\infty}}(E/\mathbb{Q}) + 1 \mod{2}.
\end{equation}
The stated parity relation follows from Lemma \ref{lemma:parity_of_selmer}. \qedhere
\end{proof}

\begin{proposition} \label{prop:delta_phi}
Let $d$ be a squarefree integer. Then
\[
\delta_{\varphi}(d)+\delta_{\hat{\varphi}}(d)\equiv
\mathrm{rk}_{2}\!\left(E^{-d}/\mathbb{Q}\right)-\mathrm{rk}_{2}\!\left(E/\mathbb{Q}\right)\tmod{2}.
\]
\end{proposition}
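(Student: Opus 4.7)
The plan is to split the assertion into two independent parity computations of the same quantity and match them via Cassels' formula together with the Kramer--Tunnell formula for local root numbers.

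\textbf{Parity of the $2$-Selmer shift.}
Lemma~\ref{lemma:parity_of_selmer} gives $\mathrm{rk}_{2}(F/\mathbb{Q})\equiv \mathrm{rk}_{2^{\infty}}(F/\mathbb{Q})+1\pmod{2}$, so
\[
(-1)^{\mathrm{rk}_{2}(E^{-d}/\mathbb{Q})-\mathrm{rk}_{2}(E/\mathbb{Q})}=(-1)^{\mathrm{rk}_{2^{\infty}}(E^{-d}/\mathbb{Q})-\mathrm{rk}_{2^{\infty}}(E/\mathbb{Q})}.
\]
Applying the $2$-parity theorem of Dokchitser--Dokchitser~\cite{DD10} to both $E$ and $E^{-d}$ identifies the right-hand side with the global root number ratio
\[
w(E^{-d}/\mathbb{Q})/w(E/\mathbb{Q})=\prod_{v}w_{v}(E^{-d}/\mathbb{Q}_{v})/w_{v}(E/\mathbb{Q}_{v}).
\]

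\textbf{Parity of $\delta_{\varphi}(d)+\delta_{\hat{\varphi}}(d)$.}
Cassels' formula (Theorem~\ref{thm:Cassels}), applied to both $E$ and $E^{-d}$ and subtracted, yields
\[
\delta_{\varphi}(d)-\delta_{\hat{\varphi}}(d)=\sum_{v}\log_{2}\!\bigl(\#\mathcal{L}_{\varphi,v}(E^{-d})/\#\mathcal{L}_{\varphi,v}(E)\bigr),
\]
and since $2\delta_{\hat{\varphi}}(d)\equiv 0\pmod{2}$, this sum has the same parity as $\delta_{\varphi}(d)+\delta_{\hat{\varphi}}(d)$. Hence
\[
(-1)^{\delta_{\varphi}(d)+\delta_{\hat{\varphi}}(d)}=\prod_{v}(-1)^{\log_{2}(\#\mathcal{L}_{\varphi,v}(E^{-d})/\#\mathcal{L}_{\varphi,v}(E))}.
\]

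\textbf{Matching the two expressions.}
The proposition thus reduces to the equality
\[
\prod_{v}(-1)^{\log_{2}(\#\mathcal{L}_{\varphi,v}(E^{-d})/\#\mathcal{L}_{\varphi,v}(E))}=\prod_{v}w_{v}(E^{-d}/\mathbb{Q}_{v})/w_{v}(E/\mathbb{Q}_{v}),
\]
which follows place-by-place from the Kramer--Tunnell conjecture for elliptic curves, established in~\cite[Cor.~4.8]{DD09}. Away from $v\mid 2dN_{E}\infty$ both sides are trivially $+1$; the cases $v=\infty$ and $v\mid d$ follow from direct local computations of Kummer images and local root numbers in the spirit of \S\ref{sec_truncated}; the matching at $v\mid 2N_{E}$, where the reduction type of $E$ and $E^{-d}$ may differ depending on the splitting of $v$ in $\mathbb{Q}(\sqrt{-d})$, is the substance of the Dokchitser--Dokchitser local analysis and is the principal technical obstacle of the argument. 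Taking the product over all $v$ then equates the two parities and yields the claim.
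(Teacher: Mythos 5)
Your strategy is genuinely different from the paper's: the paper never touches root numbers. It combines the identity
\[
\frac{\#\Sha(E/\Q)[\varphi]}{\#\Sha(E'/\Q)[\hat{\varphi}]}\cdot\frac{\#\operatorname{coker}\varphi_{\Q}}{\#\operatorname{coker}\hat{\varphi}_{\Q}}\equiv \frac{Q(\varphi)}{Q(\hat{\varphi})}\equiv 2^{\mathrm{rk}_{2^{\infty}}(E/\Q)} \bmod \Q^{\times 2}
\]
from \cite{DD10} with the descent exact sequences to obtain $\mathrm{rk}_{2^{\infty}}(E/\Q)\equiv \mathrm{rk}_{\varphi}(E/\Q)-\mathrm{rk}_{\hat{\varphi}}(E'/\Q)\pmod{2}$ for each curve separately, then subtracts and applies Lemma~\ref{lemma:parity_of_selmer}. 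That route is purely algebraic, requires no local case analysis, and works for arbitrary squarefree $d$. Your first two steps (Lemma~\ref{lemma:parity_of_selmer} plus the $2$-parity theorem on one side, Cassels' formula on the other) are correct, but they only reduce the proposition to the equivalent global statement $(-1)^{\delta_{\varphi}(d)-\delta_{\hat{\varphi}}(d)}=w(E^{-d}/\Q)/w(E/\Q)$, which is where the real content lies.

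The gap is in the ``matching'' step. The place-by-place identity
\[
(-1)^{\log_{2}\bigl(\#\mathcal{L}_{\varphi,v}(E^{-d})/\#\mathcal{L}_{\varphi,v}(E)\bigr)}
=\frac{w_{v}(E^{-d}/\Q_{v})}{w_{v}(E/\Q_{v})}
\]
is false, and Kramer--Tunnell does not assert it. Concretely, take $q\mid d$ a prime of good reduction with $q\equiv 7\bmod 8$ and suppose $b>0$, $\Delta_{E}>0$: by Lemma~\ref{loc_conditions_at_q}(iii) both $\mathcal{L}_{\varphi,q}(E)$ and $\mathcal{L}_{\varphi,q}(E^{-d})$ have order $2$, so the left-hand side is $+1$, whereas the ramified quadratic twist has additive reduction at $q$ with $w_{q}(E^{-d}/\Q_{q})=\bigl(\tfrac{-1}{q}\bigr)=-1$, so the right-hand side is $-1$. (The discrepancy is compensated at $\infty$ and cancels only in the global product.) The Kramer--Tunnell formula compares $w_{v}(E)w_{v}(E^{-d})$ with the norm index of $E$ in the local quadratic extension, up to explicit Hilbert-symbol correction factors; this is a different local invariant from the $\varphi$-isogeny quotient $\#\bigl(E'(\Q_{v})/\varphi E(\Q_{v})\bigr)$ appearing in Cassels' formula, and the correction factors are nontrivial locally, disappearing only via the product formula. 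So as written, the key step is justified by a false local identity, and the essential content of the proposition is left unproved rather than established. The approach can in principle be salvaged --- it is essentially the Dokchitser--Dokchitser proof of $2$-parity run in reverse --- but it requires the full local analysis of \cite{DD09} with the correct correction terms, and is considerably heavier than the paper's algebraic argument.
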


\begin{proof}Write $\varphi_{\mathbb{Q}}$ for the induced homomorphism
$
\varphi_{\mathbb{Q}} : E(\mathbb{Q}) \longrightarrow E'(\mathbb{Q}).
$
By \cite[p.~584]{DD10}, one has
\begin{equation} \label{e1}
\frac{\#\Sha(E/\mathbb{Q})[\varphi]}{\#\Sha(E'/\mathbb{Q})[\hat{\varphi}]}
\cdot
\frac{\#\operatorname{coker}\varphi_{\mathbb{Q}}}{\#\operatorname{coker}\hat{\varphi}_{\mathbb{Q}}}
\equiv
\frac{Q(\varphi)}{Q(\hat{\varphi})}
\quad \bmod \mathbb{Q}^{\times 2},
\end{equation}
where $Q(\cdot)$ is defined in \cite[Definition~4.1]{DD10}.  
Using \cite[Lemma~4.2(1),(3)]{DD10}, we obtain
\begin{equation} \label{e2}
\frac{Q(\varphi)}{Q(\hat{\varphi})}
\equiv
Q(\varphi)Q(\hat{\varphi})
= Q(\hat{\varphi}\!\circ\!\varphi)
= Q([2]_E)
= 2^{\mathrm{rk}_{2^{\infty}}(E/\mathbb{Q})} \mod \mathbb{Q}^{\times 2}.
\end{equation}
From the exact sequence
\[
0 \to
E(\mathbb{Q}) / \hat{\varphi}\bigl(E'(\mathbb{Q})\bigr)
\to
\Sel^{\varphi}(E/\mathbb{Q})
\to
\Sha(E/\mathbb{Q})[\varphi]
\to 0,
\]
and its analogue for $\hat{\varphi}$, we deduce
\begin{align}
\#\operatorname{coker}(\varphi_{\mathbb{Q}})
&\equiv
2^{\mathrm{rk}_{\varphi}(E/\mathbb{Q})}
\cdot
\#\Sha(E/\mathbb{Q})[\varphi]
\quad \bmod \mathbb{Q}^{\times 2}, \label{eq:cokerphi}\\
\#\operatorname{coker}(\hat{\varphi}_{\mathbb{Q}})
&\equiv
2^{\mathrm{rk}_{\hat{\varphi}}(E'/\mathbb{Q})}
\cdot
\#\Sha(E'/\mathbb{Q})[\hat{\varphi}]
\quad \bmod \mathbb{Q}^{\times 2}. \label{eq:cokerphihat}
\end{align}
Combining \eqref{e1}--\eqref{eq:cokerphihat} yields
\[
\mathrm{rk}_{2^{\infty}}(E/\mathbb{Q})
\equiv
\mathrm{rk}_{\varphi}(E/\mathbb{Q})
-\mathrm{rk}_{\hat{\varphi}}(E'/\mathbb{Q})
\tmod{2}.
\]
By repeating the argument for $E^{-d}$, we deduce that
\[
\mathrm{rk}_{2^{\infty}}(E^{-d}/\mathbb{Q})
\equiv
\mathrm{rk}_{\varphi}(E^{-d}/\mathbb{Q})
-\mathrm{rk}_{\hat{\varphi}}(E^{\prime -d}/\mathbb{Q})
\tmod{2}.
\]
Subtracting the two congruences, and invoking Lemma \ref{lemma:parity_of_selmer} gives the required parity relation. \qedhere
\end{proof}

An immediate corollary of this parity relation is a symmetric identity between the $\varphi$– and $\hat{\varphi}$–kernels of the Tate--Shafarevich groups of $E$ and its quadratic twists~$E^{-d}$.

\begin{corollary}\label{cor:symmetric-sha-ratio}
Let $d$ be squarefree, and put $K=\mathbb{Q}(\sqrt{-d})$.  
If $\Sha(E/K)[2^{\infty}]$ is finite, then
\begin{equation}\label{eq:symmetric-sha}
\frac{\#\Sha(E/\mathbb{Q})[\varphi]}{\#\Sha(E'/\mathbb{Q})[\hat{\varphi}]}
\;\equiv\;
\frac{\#\Sha(E^{-d}/\mathbb{Q})[\varphi]}{\#\Sha(E^{\prime -d}/\mathbb{Q})[\hat{\varphi}]}
\quad \bmod \mathbb{Q}^{\times 2}.
\end{equation}
If, on the other hand, there exists a squarefree $d$ for which~\eqref{eq:symmetric-sha} fails,  
then at least one of $\Sha(E/\mathbb{Q})[2^{\infty}]$ or $\Sha(E^{-d}/\mathbb{Q})[2^{\infty}]$ is infinite.
\end{corollary}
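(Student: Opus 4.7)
The ratio $A := \#\Sha(E/\mathbb{Q})[\varphi]/\#\Sha(E'/\mathbb{Q})[\hat{\varphi}]$ is a power of~$2$, so its class in $\mathbb{Q}^{\times}/\mathbb{Q}^{\times 2}$ is determined by $\log_{2} A \bmod 2$; the same applies to $A_{d} := \#\Sha(E^{-d}/\mathbb{Q})[\varphi]/\#\Sha(E^{\prime -d}/\mathbb{Q})[\hat{\varphi}]$. I will express $\log_{2}(A_{d}/A) \bmod 2$ entirely in terms of $\dim_{\mathbb{F}_{2}}\Sha(E/\mathbb{Q})[2]$ and $\dim_{\mathbb{F}_{2}}\Sha(E^{-d}/\mathbb{Q})[2]$, and then use the Cassels--Tate pairing together with the hypothesis on $\Sha(E/K)[2^{\infty}]$ to make it vanish.

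\textbf{Computing $\log_{2}(A_{d}/A) \bmod 2$.} From the defining short exact sequence
\[
0 \to E'(\mathbb{Q})/\varphi E(\mathbb{Q}) \to \Sel^{\varphi}(E/\mathbb{Q}) \to \Sha(E/\mathbb{Q})[\varphi] \to 0
\]
and its analogue for $\hat{\varphi}$, one obtains
$\log_{2} A \equiv \mathrm{rk}_{\varphi}(E/\mathbb{Q}) + \mathrm{rk}_{\hat{\varphi}}(E'/\mathbb{Q}) + \dim_{\mathbb{F}_{2}}\! E'(\mathbb{Q})/\varphi E(\mathbb{Q}) + \dim_{\mathbb{F}_{2}}\! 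E(\mathbb{Q})/\hat{\varphi} E'(\mathbb{Q}) \pmod 2$. The standard four-term exact sequence coming from $\hat{\varphi}\circ\varphi=[2]_{E}$, combined with $E(\mathbb{Q})[2]\cong\mathbb{Z}/2\mathbb{Z}$ (which forces $\varphi(E(\mathbb{Q})[2])=0$) and $E'(\mathbb{Q})[\hat{\varphi}]=\{O,(0,0)\}$, gives $\dim_{\mathbb{F}_{2}}\! E'(\mathbb{Q})/\varphi E(\mathbb{Q}) + \dim_{\mathbb{F}_{2}}\! E(\mathbb{Q})/\hat{\varphi} E'(\mathbb{Q}) = \mathrm{rk}(E/\mathbb{Q})+2$, so
\[
\log_{2} A \;\equiv\; \mathrm{rk}_{\varphi}(E/\mathbb{Q}) + \mathrm{rk}_{\hat{\varphi}}(E'/\mathbb{Q}) + \mathrm{rk}(E/\mathbb{Q}) \pmod 2.
\]
Since $E^{-d}(\mathbb{Q})[2]\cong\mathbb{Z}/2\mathbb{Z}$ as well, the same derivation applies to $E^{-d}$; subtracting the two expressions, then using Proposition~\ref{prop:delta_phi} to replace $\delta_{\varphi}(d)+\delta_{\hat{\varphi}}(d)$ by $\mathrm{rk}_{2}(E^{-d}/\mathbb{Q})-\mathrm{rk}_{2}(E/\mathbb{Q})$ modulo~$2$, and finally the identity $\mathrm{rk}_{2}(E/\mathbb{Q})=1+\mathrm{rk}(E/\mathbb{Q})+\dim_{\mathbb{F}_{2}}\Sha(E/\mathbb{Q})[2]$ extracted from the proof of Lemma~\ref{lemma:parity_of_selmer} (for both $E$ and $E^{-d}$), the Mordell--Weil rank differences cancel mod~$2$ and we arrive at
\[
\log_{2}(A_{d}/A) \;\equiv\; \dim_{\mathbb{F}_{2}}\Sha(E/\mathbb{Q})[2] + \dim_{\mathbb{F}_{2}}\Sha(E^{-d}/\mathbb{Q})[2] \pmod 2.
\]

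\textbf{Concluding via Cassels--Tate.} Exactly as in the proof of Theorem~\ref{thm:parity}, the Weil restriction $W=\mathrm{Res}_{K/\mathbb{Q}}E_{K}$ is $\mathbb{Q}$-isogenous to $E\times E^{-d}$ and satisfies $\Sha(W/\mathbb{Q})[2^{\infty}]\cong\Sha(E/K)[2^{\infty}]$, so finiteness of the latter forces finiteness of both $\Sha(E/\mathbb{Q})[2^{\infty}]$ and $\Sha(E^{-d}/\mathbb{Q})[2^{\infty}]$. For any elliptic curve over $\mathbb{Q}$ with finite $\Sha[2^{\infty}]$, the alternating non-degenerate Cassels--Tate pairing forces $\Sha[2^{\infty}]\cong G\oplus G$ for a finite abelian $2$-group~$G$, and in particular $\dim_{\mathbb{F}_{2}}\Sha[2]$ is even. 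Applied to both $E$ and $E^{-d}$, the displayed identity gives $\log_{2}(A_{d}/A)\equiv 0\pmod 2$, i.e.\ $A\equiv A_{d}\pmod{\mathbb{Q}^{\times 2}}$. The second assertion is the contrapositive: if~\eqref{eq:symmetric-sha} fails, then the sum $\dim_{\mathbb{F}_{2}}\Sha(E/\mathbb{Q})[2] + \dim_{\mathbb{F}_{2}}\Sha(E^{-d}/\mathbb{Q})[2]$ is odd, so at least one summand is odd and the corresponding $\Sha[2^{\infty}]$ must be infinite.

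\textbf{Main obstacle.} The delicate part is the parity bookkeeping in the middle step, where four rank-type invariants ($\mathrm{rk}_{\varphi}$, $\mathrm{rk}_{\hat{\varphi}}$, $\mathrm{rk}$, and $\dim\Sha[2]$) must telescope cleanly through Proposition~\ref{prop:delta_phi} and Lemma~\ref{lemma:parity_of_selmer}; the hypothesis $E(\mathbb{Q})[2]\cong\mathbb{Z}/2\mathbb{Z}$ is essential because it ensures the ``$+2$'' correction arising from the four-term sequence appears identically for $E$ and for $E^{-d}$ and hence cancels in the subtraction. The final appeal to Cassels--Tate and the Weil-restriction comparison is standard.
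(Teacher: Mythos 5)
Your proof is correct, and it takes a genuinely different route from the paper's. The paper works with the induced map $\Phi:\Sha(E/\mathbb{Q})[2]\to\Sha(E'/\mathbb{Q})[\hat{\varphi}]$: it writes the ratio as $2^{\delta_{2}(E/\mathbb{Q})}\cdot\#\operatorname{coker}(\Phi)$ modulo squares, uses the Schaefer--Stoll five-term sequence to express $\dim_{\mathbb{F}_{2}}\operatorname{coker}(\Phi)$ in terms of $\mathrm{rk}_{2}$, $\mathrm{rk}_{\varphi}$ and $\mathrm{rk}_{\hat{\varphi}}$, and then invokes Proposition~\ref{prop:delta_phi} to show $\dim_{\mathbb{F}_{2}}\operatorname{coker}(\Phi_{d})\equiv\dim_{\mathbb{F}_{2}}\operatorname{coker}(\Phi)\bmod 2$, so that the discrepancy between the two sides of~\eqref{eq:symmetric-sha} is exactly $2^{\delta_{2}(E^{-d}/\mathbb{Q})-\delta_{2}(E/\mathbb{Q})}$. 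You instead bypass $\operatorname{coker}(\Phi)$ entirely and compute $\log_{2}A\bmod 2$ directly from the descent exact sequences together with the four-term sequence attached to $\hat{\varphi}\circ\varphi=[2]_{E}$, arriving at the clean unconditional identity $\log_{2}(A_{d}/A)\equiv\dim_{\mathbb{F}_{2}}\Sha(E/\mathbb{Q})[2]+\dim_{\mathbb{F}_{2}}\Sha(E^{-d}/\mathbb{Q})[2]\bmod 2$. Both arguments hinge on the same three inputs --- Proposition~\ref{prop:delta_phi}, the alternating Cassels pairing (which gives $\dim_{\mathbb{F}_{2}}\Sha[2]\equiv\delta_{2}\bmod 2$, hence evenness when $\Sha[2^{\infty}]$ is finite), and the Weil-restriction transfer of finiteness from $K$ to $\mathbb{Q}$ --- and both reduce the statement and its contrapositive to the parity of $\delta_{2}(E/\mathbb{Q})+\delta_{2}(E^{-d}/\mathbb{Q})$; yours trades the cokernel bookkeeping for tracking the Mordell--Weil rank, which, as you observe, cancels in the subtraction. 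Your bookkeeping checks out: the identity $\dim_{\mathbb{F}_{2}}E'(\mathbb{Q})/\varphi E(\mathbb{Q})+\dim_{\mathbb{F}_{2}}E(\mathbb{Q})/\hat{\varphi}E'(\mathbb{Q})=\mathrm{rk}(E/\mathbb{Q})+2$ follows from the alternating sum of dimensions in the four-term sequence with $\dim_{\mathbb{F}_{2}}E'(\mathbb{Q})[\hat{\varphi}]/\varphi(E(\mathbb{Q})[2])=1$ and $\dim_{\mathbb{F}_{2}}E(\mathbb{Q})/2E(\mathbb{Q})=\mathrm{rk}(E/\mathbb{Q})+1$, and it applies verbatim to $E^{-d}$ because $E^{-d}(\mathbb{Q})[2]=E^{-d}[\varphi]$ and $(E^{-d})'(\mathbb{Q})[\hat{\varphi}]$ has order~$2$, so the $+2$ correction indeed disappears mod~$2$ as you claim.
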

\begin{proof}
Let $\Phi:\Sha(E/\mathbb{Q})[2]\to\Sha(E'/\mathbb{Q})[\hat{\varphi}]$ and
$\Phi_{d}:\Sha(E^{-d}/\mathbb{Q})[2]\to\Sha(E^{\prime -d}/\mathbb{Q})[\hat{\varphi}]$
be the induced maps on Tate--Shafarevich groups.
From the exact sequence
\[
0 \longrightarrow \Sha(E/\mathbb{Q})[\varphi]
\longrightarrow \Sha(E/\mathbb{Q})[2]
\xrightarrow{\ \Phi\ }
\Sha(E'/\mathbb{Q})[\hat{\varphi}]
\longrightarrow \operatorname{coker}(\Phi)
\longrightarrow 0,
\]
one obtains:
\begin{equation}
2^{\dim_{\mathbb{F}_{2}}\Sha(E/\mathbb{Q})[2]}\cdot \#\,\operatorname{coker}(\Phi)
\;\equiv\;
\frac{\#\,\Sha(E/\mathbb{Q})[\varphi]}{\#\,\Sha(E'/\mathbb{Q})[\hat{\varphi}]}
\quad \bmod \mathbb{Q}^{\times 2}.
\end{equation}
Arguing as in Theorem~\ref{thm:parity}, we may rewrite this as
\begin{equation}\label{eq:a1}
2^{\delta_{2}(E/\mathbb{Q})}\cdot \#\,\operatorname{coker}(\Phi)
\;\equiv\;
\frac{\#\,\Sha(E/\mathbb{Q})[\varphi]}{\#\,\Sha(E'/\mathbb{Q})[\hat{\varphi}]}
\quad \bmod \mathbb{Q}^{\times 2}.
\end{equation}
Repeating the computation for the twist and combining yields
\begin{equation}\label{eq:a2}
2^{\delta_{2}(E^{-d}/\mathbb{Q})-\delta_{2}(E/\mathbb{Q})}\cdot \,\frac{\#\operatorname{coker}(\Phi_{d})}{\#\operatorname{coker}(\Phi)} \cdot \frac{\#\,\Sha(E/\mathbb{Q})[\varphi]}{\#\,\Sha(E'/\mathbb{Q})[\hat{\varphi}]}
\;\equiv\;
\frac{\#\,\Sha(E^{-d}/\mathbb{Q})[\varphi]}{\#\,\Sha(E^{\prime -d}/\mathbb{Q})[\hat{\varphi}]}
\quad \bmod \mathbb{Q}^{\times 2}.
\end{equation}

Next, from the exact sequence in \cite[Lemma 6.1]{SchaeferStoll03}, one obtains:
\[
0\to
E'(\mathbb{Q})[\hat{\varphi}]/\varphi(E(\mathbb{Q})[2])
\to
\Sel^{\varphi}(E/\mathbb{Q})
\to
\Sel^{2}(E/\mathbb{Q})
\to
\Sel^{\hat{\varphi}}(E'/\mathbb{Q})
\to
\operatorname{Coker}(\Phi)
\to 0,
\]
we deduce the relation
\[
\mathrm{rk}_{2}(E/\mathbb{Q}) + 1 + \dim_{\mathbb{F}_{2}}\operatorname{coker}(\Phi)
\;=\;
\mathrm{rk}_{\varphi}(E/\mathbb{Q}) + \mathrm{rk}_{\hat{\varphi}}(E'/\mathbb{Q}),
\]
and similarly for $E^{-d}$.
Subtracting the two identities gives
\[
\mathrm{rk}_{2}(E^{-d}/\mathbb{Q}) - \mathrm{rk}_{2}(E/\mathbb{Q})
\;\equiv\;
\delta_{\varphi}(d) + \delta_{\hat{\varphi}}(d)
+ \dim_{\mathbb{F}_{2}}\operatorname{coker}(\Phi_{d})
- \dim_{\mathbb{F}_{2}}\operatorname{coker}(\Phi)
\quad \tmod 2.
\]
By Proposition~\ref{prop:delta_phi}, one has
$\dim_{\mathbb{F}_{2}}\operatorname{coker}(\Phi_{d})
=\dim_{\mathbb{F}_{2}}\operatorname{coker}(\Phi) \tmod 2$. It follows that
\[
\#\,\operatorname{coker}(\Phi_{d})
\;\equiv\;
\#\,\operatorname{coker}(\Phi)
\quad \bmod \mathbb{Q}^{\times 2}.
\]
Combining this with \eqref{eq:a2}, we find
\[
\frac{\#\,\Sha(E/\mathbb{Q})[\varphi]}{\#\,\Sha(E'/\mathbb{Q})[\hat{\varphi}]}
\;\equiv\;
\frac{\#\,\Sha(E^{-d}/\mathbb{Q})[\varphi]}{\#\,\Sha(E^{\prime -d}/\mathbb{Q})[\hat{\varphi}]}
\cdot 2^{\delta_{2}(E^{-d}/\mathbb{Q}) - \delta_{2}(E/\mathbb{Q})}
\quad \bmod \mathbb{Q}^{\times 2}.
\]
If $\Sha(E/K)[2^{\infty}]$ is finite, then both
$\Sha(E/\mathbb{Q})[2^{\infty}]$ and $\Sha(E^{-d}/\mathbb{Q})[2^{\infty}]$ are finite \cite[Lem.~7.1(b)]{Mil86},
in which case $\delta_{2}(E^{-d}/\mathbb{Q}) = \delta_{2}(E/\mathbb{Q})=0$. This gives the first assertion.
Conversely, if this relation doesn't hold modulo rational squares,
then necessarily
$\delta_{2}(E^{-d}/\mathbb{Q})-\delta_{2}(E/\mathbb{Q})\equiv1\tmod2$. As a result, at least one of $\delta_2(E^{-d}/\mathbb{Q})$ or $\delta_2(E/\mathbb{Q})$ is odd, and so at least one of
$\Sha(E/\mathbb{Q})[2^{\infty}]$ or $\Sha(E^{-d}/\mathbb{Q})[2^{\infty}]$
must be infinite. \qedhere
\end{proof}

\end{document}